\documentclass[11pt]{article}

\usepackage[T1]{fontenc}
\usepackage[margin=1in]{geometry}
\usepackage{lmodern}
\usepackage{microtype}
\usepackage{amsmath,amssymb,amsthm,mathtools}
\usepackage{mathrsfs}
\usepackage{bm}
\usepackage{enumitem}
\usepackage[colorlinks=true,linkcolor=blue,citecolor=blue,urlcolor=blue]{hyperref}

\numberwithin{equation}{section}

\newcommand{\R}{\mathbb{R}}
\newcommand{\N}{\mathbb{N}}
\newcommand{\eps}{\varepsilon}
\newcommand{\Hs}{\mathcal{H}_{p,\mu}}
\newcommand{\E}{\mathcal{E}}
\newcommand{\Ls}{\mathcal{L}_{s}}
\renewcommand{\Cap}{\operatorname{Cap}}
\newcommand{\dist}{\operatorname{dist}}
\newcommand{\supp}{\operatorname{supp}}
\newcommand{\loc}{\mathrm{loc}}

\newcommand{\dd}{\,d}
\newcommand{\one}{\mathbf{1}}

\newtheorem{theorem}{Theorem}[section]
\newtheorem{lemma}{Lemma}[section]
\newtheorem{proposition}{Proposition}[section]
\newtheorem{corollary}{Corollary}[section]

\theoremstyle{definition}
\newtheorem{definition}{Definition}[section]

\theoremstyle{remark}

\title{Symmetry and Monotonicity for Subcritical Fractional Hartree Equations with Singular Sets of Zero Capacity}

\author{Guangze Gu, Aleks Jevnikar, Zhipeng Yang\thanks{Corresponding author: yangzhipeng326@163.com}}

\date{}

\AtEndDocument{%
	\par
	\bigskip
	\bigskip
	
	\noindent
	\textbf{Guangze Gu}\\[0.2em]
	\textsc{Department of Mathematics, Yunnan Normal University, Kunming 650500, China}\\
	\textsc{Yunnan Key Laboratory of Modern Analytical Mathematics and Applications, Kunming 650500, China}\\[0.3em]
	\textit{E-mail address}: \texttt{guangzegu@163.com}
	
	\bigskip
	
	\noindent
	\textbf{Aleks Jevnikar}\\[0.2em]
	\textsc{Department of Mathematics, Computer Science and Physics, University of Udine, Udine 33100, Italy}\\[0.3em]
	\textit{E-mail address}: \texttt{aleks.jevnikar@uniud.it}
	
	\bigskip
	
	\noindent
	\textbf{Zhipeng Yang}\\[0.2em]
	\textsc{Department of Mathematics, Yunnan Normal University, Kunming 650500, China}\\
	\textsc{Yunnan Key Laboratory of Modern Analytical Mathematics and Applications, Kunming 650500, China}\\[0.3em]
	\textit{E-mail address}: \texttt{yangzhipeng326@163.com}
}

\begin{document}
	
	\maketitle
	
	\begin{abstract}
		We study positive weak singular solutions of the subcritical fractional Hartree equation
		\[
		(-\Delta)^s u = \left( \int_{\R^N\setminus\Gamma} \frac{u^p(y)}{|x-y|^\mu}\dd y \right)u^q \quad\text{in }\R^N\setminus\Gamma,
		\]
		where \(0<s<1\), \(N>2s\), \(0<\mu<N\), and \(\Gamma\subset\{x_1=0\}\) is compact with zero \((s,2)\)-capacity. For solutions with a nonremovable singular point on $\Gamma$, we prove hyperplane symmetry and strict transverse monotonicity under global Lebesgue integrability conditions. For $N\ge2$, we also prove these conclusions under upper bounds on $p$ and $q$, using only the integrability required for weak solutions. We also establish local regularity and construct solutions with logarithmic singularities along a line segment.
	\end{abstract}
	
	\noindent\textbf{Keywords.}
	Fractional Hartree equation; singular sets; moving planes.
	
	\noindent\textbf{Mathematics Subject Classification.}
	35R11; 35B06; 35J60.
	
	\section{Introduction and Main Results}\label{Sec1}
	
	Hartree and Choquard equations couple elliptic diffusion with a Riesz potential in the nonlinearity. A standard local model is
	\[
	-\Delta u+u=(I_\alpha*|u|^p)|u|^{p-2}u \quad\text{in }\R^N.
	\]
	We use the standard convention \(I_\alpha(x)=c_{N,\alpha}|x|^{\alpha-N}\), and write \(I_\alpha f=I_\alpha*f\) for the corresponding potential. We also set
	\[
	K_\rho(x)=|x|^{-\rho},\qquad 0<\rho<N,
	\]
	so that \(K_\mu=c_{N,N-\mu}^{-1}I_{N-\mu}\). Lieb \cite{Lieb1977} proved that the original Choquard equation has a positive minimizing solution, unique up to translations. Lions \cite{Lions1980} studied the equation by variational methods, and Ma and Zhao \cite{MaZhao} classified positive solitary solutions by symmetry methods. Moroz and Van Schaftingen \cite{MorozVanSchaftingen} subsequently established existence, positivity, regularity, and sharp decay of nonlinear Choquard ground states; their survey \cite{MorozVanSchaftingenGuide} reviews these developments.
	
	The Hardy-Littlewood-Sobolev inequality determines the critical scaling of
	\[
	\iint_{\R^N\times\R^N} \frac{|u(x)|^r|u(y)|^r}{|x-y|^\mu}\,\dd x\dd y
	\]
	and, for fractional diffusion of order \(s\), gives the critical exponent \(2_{\mu,s}^*=(2N-\mu)/(N-2s)\). Moroz and Van Schaftingen \cite{MorozVanSchaftingenCritical} studied ground states at the lower HLS-critical threshold, while Gao and Yang \cite{GaoYang} treated critical Dirichlet Choquard equations. In the fractional setting, d'Avenia, Siciliano, and Squassina \cite{DAveniaSicilianoSquassina} established regularity, existence, nonexistence, symmetry, and decay results, and Mukherjee and Sreenadh \cite{MukherjeeSreenadh} studied a Brezis-Nirenberg type problem at the exponent~\(2_{\mu,s}^*\).
	
	For singular solutions, the basic questions are removability, classification of blow-up profiles, and symmetry near the singular set. In the local semilinear theory, one studies
	\[
	-\Delta u=f(u)\quad\text{in }\Omega\setminus K.
	\]
	Brezis and V\'eron \cite{BrezisVeron} obtained removability results for sufficiently strong absorption, Caffarelli, Gidas, and Spruck \cite{CaffarelliGidasSpruck} proved asymptotic radial symmetry and identified the behavior of critical solutions near isolated singularities, and V\'azquez and V\'eron \cite{VazquezVeron} further developed the classification theory. Gidas, Ni, and Nirenberg \cite{GidasNiNirenberg} applied moving planes to semilinear elliptic equations, while Berestycki and Nirenberg \cite{BerestyckiNirenberg} developed moving-plane and sliding arguments for further classes of equations.
	
	Fractional diffusion requires comparison principles that account for interactions across the reflecting plane. The extension of Caffarelli and Silvestre \cite{CaffarelliSilvestre} provides one approach, while direct methods work at the operator or integral level. Chen, Li, and Ou \cite{ChenLiOu} developed direct moving planes for conformally invariant integral equations of the form
	\[
	u(x)=\int_{\R^N}\frac{u(y)^p}{|x-y|^{N-\alpha}}\,\dd y.
	\]
	Li \cite{Li2004} used moving spheres to study related integral equations. For the fractional Laplacian, Chen, Li, and Li \cite{ChenLiLi} established antisymmetric maximum and narrow-region principles, while Chen, Li, and Zhang \cite{ChenLiZhang} developed a direct moving-spheres method. In fractional Hartree problems, Dai, Fang, and Qin \cite{DaiFangQin} used direct moving planes to classify critical positive solutions, and Dai, Huang, Qin, Wang, and Fang \cite{DaiHuangQinWangFang} obtained further regularity and classification results for static equations. Dai, Liu, and Qin \cite{DaiLiuQin2021} established classification and Liouville results for nonnegative entire classical solutions of fractional Hartree equations with independent powers \(p\) and \(q\) under the separate Kelvin bounds.
	
	Isolated singularities of Hartree and related elliptic equations have also been studied. Andrade, Feng, Piccione, and Yang \cite{AndradeFengPiccioneYang2025} studied the critical local Hartree equation
	\[
	-\Delta u=(R_\alpha*F(u))f(u) \quad\text{in }B_R^*\text{ or in }\R^n\setminus\{0\},
	\]
	where \(B_R^*:=B_R\setminus\{0\}\), \(R_\alpha(x)=|x|^{\alpha-n}\), \(f(\xi)=|\xi|^{p-2}\xi\), \(F'=f\), and \(p=2_\alpha^*={(n+\alpha)/(n-2)}\). Under their regularity and integrability assumptions, they proved radial symmetry of entire positive singular solutions and derived sharp local asymptotics through blow-up analysis, a local integral representation, and asymptotic moving spheres. In the planar exponential regime, Feng, Yang, and Zhou \cite{FengYangZhou2026} considered
	\[
	-\Delta u=(K_\alpha*e^u)e^u \quad\text{in }B_1\setminus\{0\}\subset\R^2,
	\]
	and obtained the asymptotic behavior of finite-total-curvature singular solutions by means of a representation formula. From an operator-theoretic viewpoint, Boni, Noja, and Scandone \cite{BoniNojaScandone2026} related the punctured semilinear equation to a stationary nonlinear Schr\"odinger equation with a point interaction:
	\[
	\begin{aligned}
		(-\Delta+\lambda)u&=\sigma|u|^{p-1}u &&\text{in }\R^d\setminus\{0\},\quad d=2,3,\\
		(-\Delta_\vartheta+\lambda)u&=\sigma|u|^{p-1}u\qquad&&\text{in }\R^d.
	\end{aligned}
	\]
	Their equivalence yields an operator-theoretic and variational description of isolated singularities, together with multiplicity and nodal consequences in the focusing case.
	
	For singular sets of zero capacity, Montoro, Punzo, and Sciunzi \cite{MontoroPunzoSciunzi} considered positive weak solutions of
	\[
	(-\Delta)^s u=f(x,u)\quad\text{in }\Omega\setminus\Gamma.
	\]
	They imposed zero exterior data, with \(\Omega\) either a bounded domain or the whole space and \(\Gamma\) compact with zero fractional capacity, and proved symmetry and monotonicity. Cai, Gu, and Jevnikar \cite{CaiGuJevnikar2025} treated the local Hartree problem
	\[
	-\Delta u=(K_\mu*F(u))f(u) \quad\text{in }\R^N\setminus K, \qquad K\subset\{x_1=0\},
	\]
	where \(K\) is compact of zero capacity, and proved symmetry and monotonicity of singular solutions with respect to the singular hyperplane. Gu, Jevnikar, and Yang \cite{GuJevnikarYang2026} studied the punctured fractional equation
	\[
	(-\Delta)^s u= \left(\int_{\R^N\setminus\{0\}} \frac{u^p(y)}{|x-y|^\mu}\,\dd y\right)u^q \quad\text{in }\R^N\setminus\{0\},
	\]
	for \(0<s<1\), \(N>2s\), \(0<\mu<N\), and \(p,q>0\). Writing \(H_{p,\mu}[u](x)=\int_{\R^N\setminus\{0\}}u^p(y)|x-y|^{-\mu}\,\dd y\) and \(G=H_{p,\mu}[u]u^q\), they derived, under the bound \(u(x)=O(|x|^{-(N-2s)})\) as \(x\to0\) and a weighted integrability condition on \(G\), the Riesz decomposition
	\[
	u(x)=c_{N,s}\int_{\R^N} \frac{G(y)}{|x-y|^{N-2s}}\,\dd y +\kappa|x|^{-(N-2s)},\qquad \kappa\ge0.
	\]
	For \(p,q\ge1\), nonnegative Kelvin exponents, and \(u(x)\to+\infty\) as \(x\to0\), they retained the fundamental-solution term in a moving-spheres proof of radial symmetry and strict radial decrease. For the critical pair
	\[
	p=\frac{2N-\mu}{N-2s},\qquad q=\frac{N+2s-\mu}{N-2s},
	\]
	Gu, Jevnikar, and Yang \cite{GuJevnikarYangCritical} proved hyperplane symmetry and strict transverse monotonicity for positive continuous weak solutions with a blow-up point on a compact zero-capacity subset of a hyperplane, when \(N\ge2\) and \(\mu\le4s\).
	
	In this paper, we consider the equation
	\begin{equation}\label{eq1.1}
		(-\Delta)^s u = \Hs[u]u^q \quad\text{in }\R^N\setminus\Gamma,
	\end{equation}
	where
	\begin{equation}\label{eq1.2}
		\Hs[u](x)= \int_{\R^N\setminus\Gamma} \frac{u^p(y)}{|x-y|^\mu}\dd y,
	\end{equation}
	with
	\[
	0<s<1,\qquad N>2s,\qquad 0<\mu<N,\qquad p,q>1.
	\]
	For \(\lambda\in\R\), let
	\[
	T_\lambda=\{x_1=\lambda\}, \qquad R_\lambda x=x^\lambda=(2\lambda-x_1,x'), \qquad u_\lambda(x)=u(x^\lambda), \qquad \Gamma_\lambda=R_\lambda(\Gamma).
	\]
	For any function \(h\), write \(h_\lambda=h\circ R_\lambda\). In particular,
	\[
	V=\Hs[u], \qquad V_\lambda(x)=V(x^\lambda).
	\]
	For \(\lambda\le0\), set
	\[
	\Sigma_\lambda=\{x_1<\lambda\}, \qquad D_\lambda=\Sigma_\lambda\setminus(\Gamma\cup\Gamma_\lambda), \qquad w_\lambda=u-u_\lambda, \qquad W_\lambda=(w_\lambda)^+\one_{\Sigma_\lambda}.
	\]
	The geometry assumed below is that \(\Gamma\) is a compact set of zero \((s,2)\)-capacity contained in the fixed reflecting hyperplane. Since zero capacity implies zero Lebesgue measure, the potential in \eqref{eq1.2} may equivalently be computed by integration over all of \(\R^N\).
	
	We write
	\[
	2_s^*=\frac{2N}{N-2s}, \qquad 2_{\mu,s}^*=\frac{2N-\mu}{N-2s}.
	\]
	The scaling of \eqref{eq1.1} described after Definition~\ref{Def1.4} preserves the homogeneous $H^s$ seminorm when
	\[
	p+q+1=2\,2_{\mu,s}^*.
	\]
	Throughout the paper we work in the strictly subcritical range
	\[
	p>1, \qquad q>1, \qquad p+q+1<2\,2_{\mu,s}^*.
	\]
	The HLS-admissibility bounds and the additional integrability used for regularity are specified in Definition~\ref{Def1.4} and analyzed immediately after it.
	
	We use the Gagliardo seminorm
	\[
	[v]_{H^s(\R^N)}^2= \iint_{\R^N\times\R^N} \frac{|v(x)-v(y)|^2}{|x-y|^{N+2s}}\dd x\dd y, \qquad \|v\|_{H^s(\R^N)}^2=\|v\|_{L^2(\R^N)}^2+[v]_{H^s(\R^N)}^2.
	\]
	For an open set $U$, $H^s(U)$ is defined by the analogous integrals over $U$ and $U\times U$. We write $v\in H^s_{\loc}(\Omega)$ if $v|_U\in H^s(U)$ for every open set $U\Subset\Omega$.
	
	\begin{definition}\label{Def1.1}
		For a compact set \(K\subset\R^N\), define
		\[
		\Cap_{s,2}(K)= \inf\left\{ \|\varphi\|_{H^s(\R^N)}^2: \varphi\in C_c^\infty(\R^N),\ \varphi\ge 1\text{ in a neighborhood of }K \right\}.
		\]
	\end{definition}
	
	\begin{definition}\label{Def1.2}
		Set
		\[
		\Ls(\R^N)= \left\{v\in L^1_{\loc}(\R^N): \int_{\R^N}\frac{|v(x)|}{1+|x|^{N+2s}}\dd x<+\infty \right\}.
		\]
		A measurable function \(u\) is called a positive weak singular solution of \eqref{eq1.1} if
		\[
		u>0\quad\text{a.e. in }\R^N\setminus\Gamma, \qquad u\in\Ls(\R^N), \qquad u\in H^s_{\loc}(\R^N\setminus\Gamma),
		\]
		\[
		\Hs[u](x)<+\infty\quad\text{for a.e. }x\in\R^N\setminus\Gamma, \qquad \Hs[u]u^q\in L^1_{\loc}(\R^N\setminus\Gamma),
		\]
		and
		\[
		\int_{\R^N}u(x)(-\Delta)^s\varphi(x)\dd x = \int_{\R^N\setminus\Gamma}\Hs[u]u^q\varphi\dd x
		\]
		for every \(\varphi\in C_c^\infty(\R^N\setminus\Gamma)\).
	\end{definition}
	A point is called regular for a function if it lies outside its singular set.
	
	\begin{definition}\label{Def1.3}
		A positive weak singular solution \(u\) has a nonremovable singular point \(x_0\in\Gamma\) if
		\[
		\lim_{r\downarrow0} \operatorname*{ess\,sup}_{B_r(x_0)\setminus\Gamma}u=+\infty.
		\]
	\end{definition}
	This definition depends only on the almost-everywhere class of \(u\). Under the admissibility assumptions below, Proposition~\ref{Prop2.2} provides a unique locally H\"older continuous representative in \(\R^N\setminus\Gamma\), and the preceding condition is equivalent to the ordinary limsup of that representative at \(x_0\) being \(+\infty\).
	
	We first introduce the global integrability assumptions used in Theorems~\ref{Thm1.1} and \ref{Thm1.2}. Theorem~\ref{Thm1.3} gives a second parameter range in which bounds on the nonlinear exponents replace the global integrability assumptions.
	
	\begin{definition}\label{Def1.4}
		Set
		\begin{equation}\label{eq1.3}
			m_H=\frac{N(p+q-1)}{N+2s-\mu}.
		\end{equation}
		We say that a measurable function \(v\) belongs to the admissible class
		\[
		\mathfrak A=\mathfrak A_{p,q,\mu,s}(\R^N)
		\]
		if
		\begin{equation}\label{eq1.4}
			m_H> \max\left\{ p,\, \frac{(p-1)2_s^*}{2_s^*-1},\, \frac{q\,2_s^*}{2_s^*-1},\, \frac{(q-1)2_s^*}{2_s^*-2} \right\},
		\end{equation}
		and if there exists an exponent \(m_B\in(1,+\infty)\) satisfying
		\begin{equation}\label{eq1.5}
			m_B>\max\left\{\frac{pN}{N-\mu},\,\frac{qN}{2s}\right\}
		\end{equation}
		such that
		\begin{equation}\label{eq1.6}
			v\in L^{m_H}(\R^N)\cap L^{m_B}(\R^N).
		\end{equation}
	\end{definition}
	
	Whenever \(v\in\mathfrak A(\R^N)\), we fix once and for all one exponent \(m_B\) satisfying \eqref{eq1.5}-\eqref{eq1.6}. The quantities \(\beta_B,r_B,\alpha_B\), and \(\gamma_B\) below are defined using this exponent. The two exponents in Definition~\ref{Def1.4} have different roles. The exponent \(m_H\) satisfies the HLS exponent identity
	\[
	\frac{p+q-1}{m_H}+\frac{2}{2_s^*}+\frac{\mu}{N}=2,
	\]
	whereas $m_B$ gives boundedness of the potential and local regularity of $u$. The exponent $m_H$ also has a scaling interpretation: if $u$ solves \eqref{eq1.1}, then $u_R(x)=R^\alpha u(Rx)$ solves the equation with singular set $R^{-1}\Gamma$, where $\alpha=(N+2s-\mu)/(p+q-1)$. Since $m_H=N/\alpha$, the $L^{m_H}$ norm is invariant under this scaling.
	
	Since \(N+2s-\mu>0\), direct algebra gives
	\[
	m_H<2_s^* \quad\Longleftrightarrow\quad (p+q-1)(N-2s)<2(N+2s-\mu) \quad\Longleftrightarrow\quad p+q+1<2\,2_{\mu,s}^*.
	\]
	The lower bounds in \eqref{eq1.4} are exactly the conditions needed for the four HLS exponents used in Proposition~\ref{Prop2.1} to lie in \((1,+\infty)\). Consequently the full subcritical HLS-admissible parameter condition is
	\[
	\max\left\{ p,\, \frac{2N(p-1)}{N+2s},\, \frac{2Nq}{N+2s},\, \frac{N(q-1)}{2s} \right\} < \frac{N(p+q-1)}{N+2s-\mu} < \frac{2N}{N-2s}.
	\]
	Equivalently, the left-hand inequality is the system
	\[
	\begin{aligned}
		N(q-1)&>p(2s-\mu),\\
		(N+2s)(p+q-1)&>2(p-1)(N+2s-\mu),\\
		(N+2s)(p+q-1)&>2q(N+2s-\mu),\\
		2s(p+q-1)&>(q-1)(N+2s-\mu).
	\end{aligned}
	\]
	The inequalities in \eqref{eq1.4} are additional requirements beyond strict subcriticality. When the full condition holds, necessarily \(p<2_s^*\) and \(q<2_s^*-1\). There are singular solutions satisfying these conditions. For
	\[
	N=2,\qquad s=\frac12,\qquad\mu=1,\qquad p=q=2,
	\]
	one has $m_H=3$ and may take $m_B=5$. Proposition~\ref{Prop2.3} constructs positive solutions in $L^3(\R^2)\cap L^5(\R^2)$ with nonremovable singularities on the line segment $\Gamma=\{0\}\times[-1,1]$.
	
	By Proposition~\ref{Prop2.2}, every solution satisfying the preceding admissibility assumptions has a unique locally H\"older continuous representative away from \(\Gamma\). Throughout the main results and the moving-plane argument, all pointwise statements about \(u\), including symmetry, monotonicity, and reflection identities, refer to this representative.
	
	We may now state the principal results of the paper.
	
	\begin{theorem}\label{Thm1.1}
		Let \(0<s<1\), \(N>2s\), and \(0<\mu<N\). Let \(u\) be a positive weak singular solution of \eqref{eq1.1}, and assume that the following two conditions hold:
		\begin{enumerate}[label=\textup{(H\arabic*)},ref=\textup{(H\arabic*)},leftmargin=4.5em]
			\item\label{Ass1.1}
			The singular set is such that
			\[
			\Gamma\subset\{x_1=0\}, \qquad \Gamma\text{ is compact}, \qquad \Cap_{s,2}(\Gamma)=0.
			\]
			
			\item\label{Ass1.2}
			The parameters satisfy
			\[
			p>1, \qquad q>1, \qquad p+q+1<2\,2_{\mu,s}^*,
			\]
			and the solution belongs to the admissible integrability class
			\(
			u\in\mathfrak A(\R^N).
			\)
			
		\end{enumerate}
		Suppose also that $u$ has at least one nonremovable singular point on $\Gamma$. Then
		\begin{equation}\label{eq1.7}
			u(x_1,x')=u(-x_1,x') \quad\text{for every }x\in\R^N\setminus\Gamma.
		\end{equation}
		Moreover,
		\begin{equation}\label{eq1.8}
			u(x_1,x')\le u(y_1,x') \quad\text{whenever }x_1<y_1<0.
		\end{equation}
		By symmetry, the corresponding reverse monotonicity holds on \(\{x_1>0\}\).
	\end{theorem}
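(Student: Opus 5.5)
We run the method of moving planes in the $x_1$-direction, testing the equation for $w_\lambda=u-u_\lambda$ with the truncation $W_\lambda=(w_\lambda)^+\one_{\Sigma_\lambda}$ for $\lambda<0$. By Proposition~\ref{Prop2.2} we work with the locally H\"older representative of $u$ away from $\Gamma$.

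Fix $\lambda<0$. Then $\Gamma\subset\{x_1=0\}$ lies outside $\overline{\Sigma_\lambda}$, while its reflection $\Gamma_\lambda$ lies in $\{x_1=2\lambda\}\subset\Sigma_\lambda$. Inside $\Sigma_\lambda$, the reflected function $u_\lambda$ is singular on $\Gamma_\lambda$, but $u$ is regular there. So $w_\lambda^+$ should vanish near $\Gamma_\lambda$ whenever $u_\lambda$ blows up, and in general it is controlled there because $u$ is bounded near $\Gamma_\lambda$.

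To justify testing with $W_\lambda$, which is not compactly supported away from $\Gamma_\lambda$, we use $\Cap_{s,2}(\Gamma)=0$. We pick cutoffs $\eta_\eps\in C_c^\infty$ with $\eta_\eps=1$ near $\Gamma_\lambda$ and $\|\eta_\eps\|_{H^s}\to0$, and test with $W_\lambda(1-\eta_\eps)^2$ times a large cutoff $\psi_R$. The errors are handled as follows.
\begin{itemize}
\item The capacity error vanishes because $W_\lambda\in L^\infty$ near $\Gamma_\lambda$.
\item The far-field error vanishes because $u\in L^{m_H}\cap L^{m_B}$ and $u\in\Ls$.
\end{itemize}
This yields the antisymmetric energy inequality
\[
\iint\frac{(W_\lambda(x)-W_\lambda(y))^2}{|x-y|^{N+2s}}\lesssim\int_{\Sigma_\lambda}\big(V u^q-V_\lambda u_\lambda^q\big)W_\lambda .
\]
The usual rearrangement of the nonlocal form on $\Sigma_\lambda$ and its reflection is used here.

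The right-hand side splits into two pieces. The first is $V(u^q-u_\lambda^q)\le qVu^{q-1}W_\lambda$ on $\{w_\lambda>0\}$. The second is $(V-V_\lambda)u_\lambda^q$, where
\[
V-V_\lambda=\int_{\Sigma_\lambda}(u^p-u_\lambda^p)(y)\Big(|x-y|^{-\mu}-|x^\lambda-y|^{-\mu}\Big)dy\le p\int_{\Sigma_\lambda}\frac{u^{p-1}W_\lambda(y)}{|x-y|^\mu}dy .
\]
We then apply HLS and H\"older with the four exponents from Proposition~\ref{Prop2.1}, which lie in $(1,\infty)$ by \eqref{eq1.4}, together with $V\in L^\infty$ (from $m_B$), and finish with fractional Sobolev. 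This gives
\[
\|W_\lambda\|_{L^{2_s^*}}^2\le C\Big(\|u\|_{L^{m_H}(\Sigma_\lambda\cap\{w_\lambda>0\})}^{\theta_1}+\cdots\Big)\|W_\lambda\|_{L^{2_s^*}}^2,
\]
where the coefficients are small norms of $u$ over $\Sigma_\lambda$, or over the support of $W_\lambda$.

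\textbf{Step 1 (start).} For $\lambda\ll0$, the norms $\|u\|_{L^{m_H}(\Sigma_\lambda)}$ and $\|u\|_{L^{m_B}(\Sigma_\lambda)}$ are small, so the coefficient is $<1$ and $W_\lambda\equiv0$, i.e.\ $u\le u_\lambda$ in $\Sigma_\lambda$.

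\textbf{Step 2 (continuation).} Let $\lambda_0=\sup\{\lambda\le0: u\le u_\mu \text{ in }\Sigma_\mu \text{ for all }\mu\le\lambda\}$, and suppose $\lambda_0<0$. By continuity $w_{\lambda_0}\le0$. The strong maximum principle for the antisymmetric function then gives either $w_{\lambda_0}\equiv0$ or $w_{\lambda_0}<0$ in $D_{\lambda_0}$.

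\emph{Ruling out $w_{\lambda_0}\equiv0$.} Here the nonremovable singular point $x_0\in\Gamma$ is used: $u_{\lambda_0}$ blows up at $R_{\lambda_0}x_0\in\Sigma_{\lambda_0}$, where $u$ is continuous and finite. So $w_{\lambda_0}\not\equiv0$, and $w_{\lambda_0}<0$ strictly.

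\emph{Moving past $\lambda_0$.} For $\lambda\in(\lambda_0,\lambda_0+\delta)$, the set $\{w_\lambda>0\}\cap\Sigma_\lambda$ lies in a thin strip near $T_\lambda$, in a small neighborhood of $\Gamma_\lambda$, or near infinity. The first two have small measure: by compactness and continuity, $w_\lambda$ stays negative on a compact set, and $u_\lambda$ is large near $\Gamma_\lambda$. Hence the $L^{m_H}$ and $L^{m_B}$ norms of $u$ there are small, and the inequality forces $W_\lambda=0$. This contradicts the definition of $\lambda_0$, so $\lambda_0=0$.

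\textbf{Conclusion.} From $\lambda_0=0$ we get $u(x_1,x')\le u(-x_1,x')$ for $x_1<0$. Repeating the argument in the $-x_1$ direction gives the reverse inequality, hence \eqref{eq1.7}. The inequality $w_\lambda\le0$ for all $\lambda<0$ gives \eqref{eq1.8}.

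\textbf{Main obstacle.} The main difficulty is the rigorous derivation of the energy inequality near $\Gamma_\lambda$, where $u_\lambda$ is singular and only capacity-zero information is available. Closely tied to it are the uniform smallness estimates in the continuation step, which must hold near $\Gamma_\lambda$ uniformly in $\lambda$ close to $\lambda_0$. For the capacity error, we first try to bound the commutator terms $\iint W_\lambda^2(\eta_\eps(x)-\eta_\eps(y))^2|x-y|^{-N-2s}$ by $\|W_\lambda\|_{L^\infty(\text{near }\Gamma_\lambda)}^2[\eta_\eps]_{H^s}^2\to0$. Here $W_\lambda\le u$ is locally bounded near $\Gamma_\lambda$, because $\Gamma_\lambda$ stays away from $\Gamma$.
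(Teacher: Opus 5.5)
Your proposal is correct and follows essentially the paper's own route: capacitary and far-field cutoffs as in Lemmas~\ref{Lem3.2}--\ref{Lem3.4}, the reflection identity and positive-part bound of Lemmas~\ref{Lem4.1}--\ref{Lem4.3}, and the HLS--Sobolev closure of Proposition~\ref{Prop4.1} in terms of \(\|u\|_{L^{m_H}(E_\lambda)}\); the moving-plane part is also the same, namely the antisymmetric strong-maximum-principle dichotomy with the nonremovable point excluding \(w_{\lambda_0}\equiv0\), and the localization of \(E_\lambda\) in Lemma~\ref{Lem5.6}. Two inessential corrections: the claim that \(u_\lambda\) is large near \(\Gamma_\lambda\) is unjustified (nonremovability is only a \(\limsup\) condition at one point) and unnecessary, since the neighborhood of \(\Gamma_{\lambda_0}\) is controlled by its small measure alone; also, the finiteness of \(\|W_\lambda\|_{L^{2_s^*}}\) needed to close the inequality comes from the global bound of \(u\) on \(\{\dist(\cdot,\Gamma)\ge|\lambda|\}\supset\Sigma_\lambda\) in Lemma~\ref{Lem2.7}, not merely from local boundedness near \(\Gamma_\lambda\).
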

	
	The monotonicity conclusion in Theorem~\ref{Thm1.1} admits the following refinement.
	
	\begin{theorem}\label{Thm1.2}
		Under the hypotheses of Theorem~\ref{Thm1.1}, for every \(x'\in\R^{N-1}\),
		\[
		u(x_1,x')<u(y_1,x') \qquad\text{whenever }x_1<y_1<0,
		\]
		and
		\[
		u(y_1,x')<u(x_1,x') \qquad\text{whenever }0<x_1<y_1.
		\]
		Equivalently, \(t\mapsto u(t,x')\) is strictly increasing on \((-\infty,0)\) and strictly decreasing on \((0,+\infty)\).
	\end{theorem}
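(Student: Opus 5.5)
We upgrade the weak monotonicity \eqref{eq1.8} of Theorem~\ref{Thm1.1} to strict monotonicity with a strong maximum principle applied to the antisymmetric function $w_\lambda=u-u_\lambda$. By the symmetry \eqref{eq1.7} it suffices to treat $\{x_1<0\}$. Fix $\lambda<0$. By \eqref{eq1.8}, for $x\in\Sigma_\lambda$ the reflected point $x^\lambda$ has first coordinate $2\lambda-x_1$, which lies in $(x_1,-x_1]$ after symmetrization, so $|2\lambda - x_1|\le|x_1|$ with $x_1<0$. Combining symmetry and monotonicity therefore gives $w_\lambda\le 0$ in $\Sigma_\lambda\setminus(\Gamma\cup\Gamma_\lambda)$. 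Since $\Gamma\subset T_0$ and $\lambda<0$, the set $\Gamma_\lambda$ lies in $\{x_1=2\lambda\}$, and $\Gamma_\lambda\cap\Sigma_\lambda$ is a compact set of zero capacity. Strictness for a pair $x_1<y_1<0$ follows by taking $\lambda=(x_1+y_1)/2$, provided we show $w_\lambda<0$ pointwise in $D_\lambda$; the points of $\Gamma_\lambda$ have $u_\lambda=u$ on $\Gamma$, which are not in the domain we compare on anyway.

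The key step is to show that $w_\lambda\not\equiv0$ and to derive a contradiction from an interior zero. Suppose $w_\lambda(x_0)=0$ for some $x_0\in D_\lambda$, a point of continuity of $w_\lambda$ by Proposition~\ref{Prop2.2}. Evaluate $(-\Delta)^s w_\lambda(x_0)$ pointwise; this is legitimate because $u$ is locally $C^{2s+\epsilon}$ near $x_0$ by bootstrapping regularity from Proposition~\ref{Prop2.2}, using $V\in L^\infty_{\loc}$ from the $m_B$ integrability. If the needed regularity is not available, we argue instead in the viscosity or distributional sense with a test bump. By antisymmetry,
\[
(-\Delta)^s w_\lambda(x_0)=C\,\mathrm{PV}\!\int_{\Sigma_\lambda}\Big(\frac{1}{|x_0-y|^{N+2s}}-\frac{1}{|x_0-y^\lambda|^{N+2s}}\Big)\big(-w_\lambda(y)\big)\,dy\ \ge 0,
\]
and this is strictly positive unless $w_\lambda\equiv0$ a.e.\ in $\Sigma_\lambda$. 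On the other hand, the equation gives
\[
(-\Delta)^s w_\lambda(x_0)=V(x_0)u^q(x_0)-V_\lambda(x_0)u_\lambda^q(x_0)=\big(V(x_0)-V_\lambda(x_0)\big)u^q(x_0).
\]
Split
\[
V(x_0)-V_\lambda(x_0)=\int_{\Sigma_\lambda}\Big(\frac{1}{|x_0-y|^\mu}-\frac{1}{|x_0-y^\lambda|^\mu}\Big)\big(u^p(y)-u_\lambda^p(y)\big)\,dy\le0,
\]
since the kernel is positive and $u\le u_\lambda$ in $\Sigma_\lambda$. The left side is therefore $\ge0$ and the right side is $\le0$, so both vanish, and hence $w_\lambda\equiv0$ in $\Sigma_\lambda$.

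Finally we rule out $w_\lambda\equiv0$, which says $u$ is symmetric about $T_\lambda$ with $\lambda<0$. Let $x_0\in\Gamma$ be the nonremovable singular point, so $\operatorname{ess\,sup}_{B_r(x_0)}u\to\infty$. Its reflection $x_0^\lambda$ has first coordinate $2\lambda\ne0$. When $x_0^\lambda\notin\Gamma_\lambda$ is not a singular point of $u$, the continuity of $u$ near $x_0^\lambda$ from Proposition~\ref{Prop2.2} bounds $u$ near $x_0^\lambda$, while the symmetry forces $u(x)=u(x^\lambda)$ to blow up near $x_0^\lambda$, a contradiction. The singular set of $u$ is $\Gamma\subset T_0$, and $x_0^\lambda\notin T_0$, so $u$ is indeed regular near $x_0^\lambda$ in $\R^N\setminus\Gamma$; the identity $w_\lambda\equiv0$ holds on a full neighborhood minus a null set, which suffices. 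The reverse inequality on $\{x_1>0\}$ follows from \eqref{eq1.7}.

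The main obstacle is the justification of the pointwise evaluation of $(-\Delta)^s w_\lambda(x_0)$, since $u$ is only a weak solution with singularities on $\Gamma$ and $\Gamma_\lambda$, and the integral must converge near these sets. We first try the $\Ls$ integrability of $u$ together with the fact that the singular sets stay away from $x_0$, which makes the integrand integrable away from a neighborhood of $x_0$. If pointwise regularity fails, we replace the argument by testing the weak formulation against a small nonnegative bump centered at $x_0$ and using lower semicontinuity.
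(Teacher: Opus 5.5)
Your proof is correct and has the same architecture as the paper's. It consists of the weak comparison $u\le u_\lambda$ in $D_\lambda$, a strong-comparison dichotomy, the exclusion of $w_\lambda\equiv0$ by reflecting the nonremovable point to the regular point $x_0^\lambda$ (the paper's Lemma~\ref{Lem5.4}), and the choice $\lambda=(x_1+y_1)/2$. You obtain the weak comparison from \eqref{eq1.7}--\eqref{eq1.8} instead of quoting Proposition~\ref{Prop5.2}. This is equivalent, and the paper argues exactly this way at the end of Section~\ref{Sec6}.

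The real difference is the strong comparison step. The paper (Proposition~\ref{Prop5.1} with Lemma~\ref{Lem5.1}) treats $-w_\lambda$ as a weak antisymmetric supersolution of $(-\Delta)^s(-w_\lambda)\ge0$. It then applies the Fall--Jarohs strong maximum principle, which needs only $H^s_{\loc}$, the $\Ls$ tail condition and a locally bounded source. You instead evaluate $(-\Delta)^s w_\lambda$ pointwise at a putative interior zero $x_0$. There, antisymmetry forces it to be $\ge0$ (strictly unless $w_\lambda\equiv0$), while the equation together with Lemma~\ref{Lem4.1} forces it to equal $(V-V_\lambda)(x_0)u^q(x_0)\le0$.

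This classical direct argument is more elementary, but it genuinely relies on the $C^{2s+\gamma}_{\loc}$ regularity of Proposition~\ref{Prop2.2}. That in turn rests on the $m_B$-integrability and the Schauder step. It also needs the routine identification of the weak equation with the pointwise one near $x_0$. Under \ref{Ass1.2} all of this is available, so your fallback clause is unnecessary, but you should state the identification explicitly. The paper's weak version avoids second-order regularity altogether. That is why it transfers unchanged to Section~\ref{Sec6}, where Lemma~\ref{Lem6.1} only gives H\"older continuity.

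One wording slip: $x_0^\lambda$ always belongs to $\Gamma_\lambda$. What you need, and then correctly use, is $x_0^\lambda\notin\Gamma$.
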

	
	For $N\ge2$, a second result replaces condition \ref{Ass1.2} by separate upper bounds on $p$ and $q$ and uses only the integrability in Definition~\ref{Def1.2}.
	
	\begin{theorem}\label{Thm1.3}
		Let \(N\ge2\), \(0<s<1\), \(0<\mu<N\), and \(p,q>1\) satisfy
		\begin{equation}\label{eq1.9}
			p\le\frac{2N-\mu}{N-2s},\qquad q\le\frac{N+2s-\mu}{N-2s},
		\end{equation}
		with at least one strict inequality. Let \(\Gamma\) satisfy \ref{Ass1.1}, and let \(u\) be a positive weak singular solution of \eqref{eq1.1} in the sense of Definition~\ref{Def1.2}, with at least one nonremovable singular point on \(\Gamma\). Then \(u\) has a locally H\"older continuous representative in \(\R^N\setminus\Gamma\), and the symmetry and strict monotonicity conclusions of Theorems~\ref{Thm1.1} and \ref{Thm1.2} hold for this representative.
	\end{theorem}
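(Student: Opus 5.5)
The plan is to reduce Theorem~\ref{Thm1.3} to a moving-plane argument in integral form, run through the Kelvin transform about a point off the hyperplane $\{x_1=0\}$. The separate bounds \eqref{eq1.9} are exactly the conditions that make the Kelvin-transformed equation have nonpositive exponents of $|x|$. So the global integrability in \ref{Ass1.2} can be traded for the natural decay of the Kelvin transform at infinity.

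\textbf{Step 1 (integral representation).} Using $\Cap_{s,2}(\Gamma)=0$, I extend the weak formulation of Definition~\ref{Def1.2} across $\Gamma$ with cutoffs $\varphi\eta_k$, where $\eta_k\to1$ in $H^s$ and $\eta_k=0$ near $\Gamma$. Positivity of $G=\Hs[u]u^q$ together with $u\in\Ls(\R^N)$ should then give $G\in L^1_{\loc}(\R^N)$ with $\int G(1+|y|)^{2s-N}\,dy<\infty$, and a Riesz-type decomposition
\[
u(x)=c_{N,s}\int_{\R^N}\frac{G(y)}{|x-y|^{N-2s}}\,dy+(\text{nonnegative }s\text{-harmonic part}).
\]
The $s$-harmonic part is a nonnegative constant by a Liouville theorem for $u\in\Ls$. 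A measure part concentrated on $\Gamma$ is excluded because $\Gamma$ has zero capacity and the equation holds weakly against $H^s$ test functions. Local H\"older continuity away from $\Gamma$ then follows by bootstrapping in this representation on compact subsets of $\R^N\setminus\Gamma$, where $u\in L^\infty_{\loc}$ and $\Hs[u]\in L^\infty_{\loc}$; this is in the spirit of Proposition~\ref{Prop2.2}. The constant can be shown to vanish, since otherwise $\Hs[u]=\infty$ by $u^p\ge c>0$ and $\mu<N$.

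\textbf{Step 2 (Kelvin transform and comparison inequality).} Fix a point $z$ with $z_1\ne0$ and set
\[
\bar u(x)=|x-z|^{2s-N}u\bigl(z+\tfrac{x-z}{|x-z|^2}\bigr).
\]
It satisfies
\[
(-\Delta)^s\bar u=|x-z|^{-\tau_1}\Bigl(\int\frac{|y-z|^{-\tau_2}\bar u^p(y)}{|x-y|^\mu}\,dy\Bigr)\bar u^q,
\]
with $\tau_1,\tau_2\ge0$ exactly by \eqref{eq1.9}, at least one of them strictly positive. In integral form I compare $\bar u$ with its reflection $\bar u_\lambda$ across planes $T_\lambda$ perpendicular to $e_1$. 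The difference $\bar u-\bar u_\lambda$ is written using the kernel difference $|x-y|^{2s-N}-|x^\lambda-y|^{2s-N}\ge0$ for $x,y\in\Sigma_\lambda$. The weights $|x-z|^{-\tau_i}$ are monotone in the right direction as long as $T_\lambda$ separates the relevant points. The HLS inequality combined with the Hardy--Littlewood--Sobolev estimate on $\Sigma_\lambda^-=\{\bar u>\bar u_\lambda\}$ gives
\[
\|W\|_{L^{r}}\le C\,\|\bar u\|^{\theta}_{L^{a}(\Sigma_\lambda^-)}\|W\|_{L^r}.
\]
Step 1 shows $\bar u=O(|x|^{2s-N})$ at infinity, which supplies the integrability needed for this estimate. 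The singular points, namely $\Gamma$ (now $\bar\Gamma$) and $z$, are handled using that $\bar u$ blows up at a nonremovable point while the reflected function stays bounded nearby. Zero capacity also ensures that $\Gamma$ carries no mass.

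\textbf{Step 3 (moving the plane).} I start the plane from $\lambda=-\infty$ and move it up to the limiting position $\lambda_0$. A narrow-region/small-measure argument shows $\bar u\le \bar u_\lambda$ for $\lambda$ near $-\infty$. If $\lambda_0$ were smaller than the position dictated by $\bar\Gamma$, strict inequality combined with the smallness of $\Sigma_\lambda^-$ lets the plane move further. The nonremovable singular point prevents the plane from passing the reflection of $\bar\Gamma$. Undoing the Kelvin transform and letting $z$ vary over $\{x_1<0\}$ gives $u(x_1,x')\le u(-x_1,x')$ for $x_1<0$; the reverse inequality comes from $z_1>0$. Together these yield \eqref{eq1.7} and \eqref{eq1.8}. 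Strict monotonicity follows as in Theorem~\ref{Thm1.2}: if $w_\lambda=0$ at an interior point of $\Sigma_\lambda$, the integral representation forces $w_\lambda\equiv0$, which contradicts the singularity at $\Gamma$ for $\lambda<0$.

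\textbf{Main obstacle.} I expect the hardest step to be Step 1: deriving the representation, including that $G$ is globally integrable against $(1+|y|)^{2s-N}$ and that no singular measure sits on $\Gamma$, from only the weak integrability of Definition~\ref{Def1.2}. My first attempt would test the equation against $\eta_k\,\psi_R$ with $\psi_R$ a large cutoff, and use that $(-\Delta)^s\psi_R=O(R^{-2s}(1+|x|/R)^{-N-2s})$ is controlled by $u\in\Ls$. The hypothesis $N\ge2$ should enter here or in the placement of $z$ off the hyperplane, so that the reflected singular set stays away from the Kelvin pole.
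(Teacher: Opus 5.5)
There is a genuine gap, and it sits in Step 1, which Steps 2--3 rest on. Both your integral-form comparison and your strict-monotonicity argument use the representation. The weak formulation in Definition~\ref{Def1.2} holds only for tests in $C_c^\infty(\R^N\setminus\Gamma)$. If you test with $\varphi\eta_k$, the error term $\int u\,(-\Delta)^s\bigl(\varphi(1-\eta_k)\bigr)$ need not tend to zero, because $u\notin H^s$ near $\Gamma$. This term records exactly the part of $(-\Delta)^s u$ carried by $\Gamma$. Zero capacity does not exclude such mass, and singular solutions typically have it:
\begin{itemize}
\item in the paper's own example, Proposition~\ref{Prop2.3}, one has $(-\Delta)^{1/2}u=\eps\,\mathcal H^1\!\restriction_\Gamma+f$;
\item for $\Gamma=\{0\}$, the decomposition in \cite{GuJevnikarYang2026} contains $\kappa|x|^{-(N-2s)}$.
\end{itemize}
So the representation $u=c_{N,s}\int G(y)|x-y|^{2s-N}\,dy$ is false in general. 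A corrected version would add $c_{N,s}\int|x-y|^{2s-N}\,d\nu(y)$ with $\nu\ge0$ carried by $\Gamma$; this term has the favorable sign for every plane that has $\Gamma$ on its far side. Proving even that version from Definition~\ref{Def1.2} requires two things you do not supply: a removability theorem for nonnegative $s$-superharmonic functions across polar sets, and a Riesz decomposition. Step 1 also takes $u\in L^\infty_{\loc}(\R^N\setminus\Gamma)$ for granted, which is not given. This is precisely where \eqref{eq1.9} is needed. Lemma~\ref{Lem6.1} starts from $u\in L^{2N/(N-2s)}_{\loc}$ and bounds $\Hs[u]$ by HLS. It then bootstraps with $\Hs[u]u^{q-1}\in L^k_{\loc}$, $k>N/(2s)$, which uses subcriticality and the upper bound on $q$.

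The paper needs no representation at all. It runs the weak energy argument of Sections~\ref{Sec3}--\ref{Sec4} (capacitary cutoffs, Lemma~\ref{Lem3.3}) on the Kelvin transform. The tests vanish near both singular sets, so any mass there is invisible. The global integrability comes only from $v=O(|x|^{-d})$, that is, from local boundedness of $u$ at the center. The paper places the center at $a\in\{x_1=0\}\setminus\Gamma$, which keeps $\mathcal I_a(\Gamma)\cup\{a\}$ in the hyperplane so the plane can be moved to $0$. It then transfers the conclusions to $u$ by Lemma~\ref{Lem6.3}, sending centers to infinity tangentially; this is where $N\ge2$ is used.

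Your off-hyperplane center is a legitimate alternative, but Step 3 misplaces the stopping position. The inversion $\mathcal I_z$ maps $\{x_1=0\}$ onto a sphere through $z$, and the weights $|x-z|^{-\tau_i}$ are correctly ordered only for $\lambda\le z_1$. So the plane must stop at $\lambda=z_1$, not at a position ``dictated by $\bar\Gamma$''. At $\lambda=z_1$, reflection in $T_{z_1}$ commutes with $\mathcal I_z$ and preserves $|x-z|$, so $\bar u\le\bar u_{z_1}$ on $\Sigma_{z_1}$ is exactly $u\le u_{z_1}$ there. Varying $z_1$ over $(-\infty,0)$ and $(0,\infty)$ would give all reflection inequalities directly, bypassing Lemma~\ref{Lem6.3}. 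With the paper's energy estimates in place of your Step 1, that route appears to go through.
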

	
	The bounds in \eqref{eq1.9}, with at least one strict inequality, imply strict subcriticality and, since $q>1$, also $\mu<4s$. Theorems~\ref{Thm1.1} and \ref{Thm1.3} are complementary. For example, $(N,s,\mu,p,q)=(3,\tfrac12,1,2,\tfrac85)$ satisfies $m_H=13/5<2_s^*=3$, and the maximum in \eqref{eq1.4} equals $12/5$, but $q=8/5$ exceeds the upper bound $3/2$ in \eqref{eq1.9}. The example below shows that Theorem~\ref{Thm1.3} also covers solutions outside $\mathfrak A$. The assumption $N\ge2$ allows Kelvin centers in $\{x_1=0\}\setminus\Gamma$ and a tangential direction along which these centers tend to infinity. Theorem~\ref{Thm1.1} also applies when $N=1>2s$.
	
	To exhibit such a solution, take
	\[
	N=3,\qquad s=\frac45,\qquad\mu=\frac32,\qquad p=2,\qquad q=\frac95.
	\]
	The homogeneous solution of \cite[Theorem 1.3]{GuJevnikarYang2026} has the form
	\[
	u(x)=A|x|^{-31/28},
	\]
	where \(A\) is the unique positive constant specified in that theorem. With \(\alpha=31/28\), one has \(0<\alpha<N-2s\) and \(N-\mu<\alpha p<N\), which are the convergence and positivity conditions in that theorem. This solution satisfies the assumptions of Theorem~\ref{Thm1.3} with \(\Gamma=\{0\}\), whereas it belongs to no \(L^m(\R^N)\) with \(1\le m\le\infty\).
	
	The proofs use moving planes and three ingredients. First, for \(\lambda<0\) and every \(x\in D_\lambda\), the reflected potentials satisfy
	\[
	V(x)-V_\lambda(x) = \int_{\Sigma_\lambda} \bigl(u^p(y)-u_\lambda^p(y)\bigr) \left( \frac1{|x-y|^\mu}-\frac1{|x^\lambda-y|^\mu} \right)\dd y,
	\]
	whose positive part \((V-V_\lambda)^+\) we estimate on the positivity set of the reflected difference. Second, when the reflected singular set enters the moving half-space, we derive energy estimates using compactly supported tests that vanish near the reflected singular set and then remove the cutoffs to establish finite fractional energy for \(W_\lambda\). Third, we pass from the weak formulation to pointwise comparison through local H\"older regularity, obtained from the stated integrability assumptions or directly from local Sobolev regularity under \eqref{eq1.9}.
	
	The reflected energy estimate also gives a quantitative comparison criterion. For \(\lambda<0\), let
	\[
	E_\lambda=\{x\in D_\lambda:u(x)>u_\lambda(x)\}.
	\]
	Corollary~\ref{Cor4.2} proves that
	\[
	\|u\|_{L^{m_H}(E_\lambda)}<\tau_* \quad\Longrightarrow\quad W_\lambda\equiv0.
	\]
	The constant $\tau_*$ is independent of the reflecting plane.
	
	Section~\ref{Sec2} contains the capacity, HLS, and regularity estimates, together with the construction of singular solutions. Section~\ref{Sec3} justifies tests near the reflected singular set. Section~\ref{Sec4} proves the reflected Hartree energy estimate, and Section~\ref{Sec5} proves Theorems~\ref{Thm1.1} and \ref{Thm1.2} by moving planes. Section~\ref{Sec6} proves Theorem~\ref{Thm1.3}.
	
	\section{Preliminaries}\label{Sec2}
	
	We collect the elementary analytic tools used in the moving-plane argument.  The capacity facts below are standard consequences of the definition of Bessel capacity; see, for instance, \cite{AdamsHedberg,DiNezzaPalatucciValdinoci}.  We include the main details in order to fix the precise form needed later.
	
	The homogeneous energy space is
	\[
	\dot H^s(\R^N)= \{v\in L^{2_s^*}(\R^N):[v]_{H^s(\R^N)}<\infty\}.
	\]
	There is a constant $C_{\mathrm{Sob}}=C_{\mathrm{Sob}}(N,s)>0$ such that
	\[
	\|v\|_{L^{2_s^*}(\R^N)}^2\le C_{\mathrm{Sob}}[v]_{H^s(\R^N)}^2 \qquad\text{for every }v\in\dot H^s(\R^N).
	\]
	
	\begin{lemma}\label{Lem2.1}
		Let \(K\subset\R^N\) be compact and satisfy \(\Cap_{s,2}(K)=0\). For every open neighborhood \(U\) of \(K\) and every \(\eps>0\), there exists \(\eta_\eps\in C^\infty(\R^N)\) such that
		\[
		0\le\eta_\eps\le1, \qquad \eta_\eps=0\text{ in a neighborhood of }K, \qquad \eta_\eps=1\text{ on }\R^N\setminus U,
		\]
		and
		\[
		\|1-\eta_\eps\|_{H^s(\R^N)}<\eps.
		\]
	\end{lemma}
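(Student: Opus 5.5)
The plan is to start from the definition of capacity and take $\eta_\eps=1-\psi$ for a suitable test function $\psi$ from Definition~\ref{Def1.1}. The only issue is to make $\psi$ take values in $[0,1]$ and vanish outside $U$ without losing control of the $H^s$ norm. Since $\Cap_{s,2}(K)=0$, for every $\delta>0$ there is $\varphi\in C_c^\infty(\R^N)$ with $\varphi\ge1$ on a neighborhood $O$ of $K$ and $\|\varphi\|_{H^s(\R^N)}<\delta$.

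\textbf{Step 1 (truncation).} Fix a smooth nondecreasing function $\theta:\R\to[0,1]$ with $\theta(t)=0$ for $t\le 1/4$, $\theta(t)=1$ for $t\ge 3/4$, and $|\theta'|\le 4$. Then $\theta(0)=0$, so $\theta\circ\varphi\in C_c^\infty(\R^N)$ and it equals $1$ on $O$. Because $\theta$ is Lipschitz with constant $4$ and $|\theta(t)|\le 4|t|$, we get pointwise
\[
|\theta(\varphi(x))-\theta(\varphi(y))|\le 4|\varphi(x)-\varphi(y)|,\qquad |\theta(\varphi(x))|\le4|\varphi(x)|.
\]
Hence $\|\theta\circ\varphi\|_{H^s}\le 4\|\varphi\|_{H^s}<4\delta$. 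Using a smooth $\theta$ rather than $\min(\varphi^+,1)$ keeps the function smooth.

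\textbf{Step 2 (localization into $U$).} Choose $\chi\in C_c^\infty(U)$ with $0\le\chi\le1$ and $\chi=1$ on a neighborhood $O'$ of $K$; this is possible because $K$ is compact and $U$ is open. Set $\psi=\chi\,(\theta\circ\varphi)$. Then:
\begin{itemize}
\item $\psi\in C_c^\infty(U)$ and $0\le\psi\le1$;
\item $\psi=1$ on the neighborhood $O\cap O'$ of $K$.
\end{itemize}
It remains to bound $\|\psi\|_{H^s}$. Use the splitting
\[
\psi(x)-\psi(y)=\chi(x)\bigl(g(x)-g(y)\bigr)+g(y)\bigl(\chi(x)-\chi(y)\bigr),\qquad g=\theta\circ\varphi.
\]
The first term is controlled by $[g]_{H^s}^2$. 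For the second, we need
\[
\int|g(y)|^2\int\frac{|\chi(x)-\chi(y)|^2}{|x-y|^{N+2s}}\,dx\,dy\le C(\chi,N,s)\|g\|_{L^2}^2.
\]
The inner integral is bounded uniformly in $y$: split into $|x-y|<1$, using $\|\nabla\chi\|_\infty$, and $|x-y|\ge1$, using $|\chi|\le1$. This yields $\|\psi\|_{H^s}\le C_\chi\|g\|_{H^s}\le 4C_\chi\delta$, where $C_\chi$ depends only on the fixed $\chi$, hence only on $U$ and $K$.

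\textbf{Step 3 (conclusion).} Choose $\delta=\eps/(4C_\chi)$ and put $\eta_\eps=1-\psi$. Then:
\begin{itemize}
\item $\eta_\eps\in C^\infty(\R^N)$ and $0\le\eta_\eps\le1$;
\item $\eta_\eps=0$ near $K$;
\item $\eta_\eps=1$ outside $\supp\chi\subset U$;
\item $\|1-\eta_\eps\|_{H^s}=\|\psi\|_{H^s}<\eps$.
\end{itemize}

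The main point needing care is Step 2: the Leibniz-type bound for the Gagliardo seminorm must use a constant independent of $\delta$. This holds because $\chi$ is fixed before $\varphi$ is chosen. The truncation in Step 1 is routine once one notes that $\theta(0)=0$ and that $\theta$ is Lipschitz.
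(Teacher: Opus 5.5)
Your proof is correct and follows the paper's argument: the paper combines the same two ingredients, a smooth Lipschitz truncation vanishing at $0$ and multiplication by a fixed cutoff $\chi\in C_c^\infty(U)$. The only differences are that the paper applies the truncation to $\chi\psi_j$ instead of localizing after truncating, and that it merely asserts the boundedness of multiplication by $\chi$ on $H^s$, which you verify explicitly.
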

	
	\begin{proof}
		Choose $\chi\in C_c^\infty(U)$ with $0\le\chi\le1$ and $\chi=1$ near $K$. By zero capacity, there are $\psi_j\in C_c^\infty(\R^N)$ with $\psi_j\ge1$ near $K$ and $\|\psi_j\|_{H^s}\to0$. Let $T\in C^\infty(\R)$ be Lipschitz, with $0\le T\le1$, $T=0$ on $(-\infty,1/2]$, and $T=1$ on $[1,\infty)$. Multiplication by $\chi$ is bounded on $H^s$, and $T(0)=0$, so
		\[
		\|T(\chi\psi_j)\|_{H^s} \le C_T\|\chi\psi_j\|_{H^s} \le C\|\psi_j\|_{H^s}\longrightarrow0.
		\]
		For large $j$, the function $\eta_\eps=1-T(\chi\psi_j)$ has all the stated properties.
	\end{proof}
	
	\begin{lemma}\label{Lem2.2}
		If \(K\subset\R^N\) is compact and \(\Cap_{s,2}(K)=0\), then \(|K|=0\). Moreover, changing an \(H^s\)-function on \(K\) preserves its \(H^s\)-class. Finally, if \(v\in H^s(\R^N)\), then \(v\) may be approximated in \(H^s(\R^N)\) by functions vanishing in a neighborhood of \(K\).
	\end{lemma}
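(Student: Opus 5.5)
The three assertions of Lemma~\ref{Lem2.2} will be handled in turn, each using only Definition~\ref{Def1.1} and Lemma~\ref{Lem2.1}.

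\emph{Zero Lebesgue measure.} Take any admissible $\varphi\in C_c^\infty(\R^N)$ with $\varphi\ge1$ on a neighborhood of $K$. Then $\one_K\le\varphi$ on $K$, so
\[
|K|\le\int_K\varphi^2\dd x\le\|\varphi\|_{L^2(\R^N)}^2\le\|\varphi\|_{H^s(\R^N)}^2 .
\]
Taking the infimum over all such $\varphi$ gives $|K|\le\Cap_{s,2}(K)=0$.

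\emph{Changing a function on $K$.} Let $v\in H^s(\R^N)$ and let $\tilde v=v$ off $K$. Since $|K|=0$, the two functions agree almost everywhere on $\R^N$, and the pair $(x,y)$ agrees almost everywhere on $\R^N\times\R^N$, because $K\times\R^N\cup\R^N\times K$ is null in $\R^{2N}$. The $L^2$ norm and the Gagliardo double integral are therefore unchanged. Hence $\tilde v\in H^s(\R^N)$ with the same norm, and in fact the same element of $H^s$.

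\emph{Approximation by functions vanishing near $K$.} Let $v\in H^s(\R^N)$. By density, first choose $\phi\in C_c^\infty(\R^N)$ with $\|v-\phi\|_{H^s}<\delta$. Fix a bounded neighborhood $U$ of $K$ and apply Lemma~\ref{Lem2.1} to get $\eta_\eps$. Then $\eta_\eps\phi$ vanishes near $K$, and
\[
\|\phi-\eta_\eps\phi\|_{H^s}=\|(1-\eta_\eps)\phi\|_{H^s}\le C_\phi\|1-\eta_\eps\|_{H^s}< C_\phi\eps .
\]
Here $C_\phi$ depends on $\|\phi\|_{L^\infty}$ and $\|\nabla\phi\|_{L^\infty}$. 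Choosing $\eps$ small after $\delta$ finishes the argument.

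The only step needing care, and the main obstacle, is the multiplier estimate $\|g\phi\|_{H^s}\le C_\phi\|g\|_{H^s}$ for $\phi\in C_c^\infty$. The $L^2$ part is immediate. For the seminorm, I would split
\[
g(x)\phi(x)-g(y)\phi(y)=\phi(x)\bigl(g(x)-g(y)\bigr)+g(y)\bigl(\phi(x)-\phi(y)\bigr).
\]
The first term is bounded by $\|\phi\|_\infty^2[g]^2$. The second is bounded by
\[
\int|g(y)|^2\int\frac{\min\{\|\nabla\phi\|_\infty^2|x-y|^2,4\|\phi\|_\infty^2\}}{|x-y|^{N+2s}}\dd x\dd y\le C\|g\|_{L^2}^2,
\]
because $0<s<1$ makes the inner integral converge both near the diagonal and at infinity. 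This is the same multiplier bound already invoked in the proof of Lemma~\ref{Lem2.1}, so it can be cited rather than reproved.
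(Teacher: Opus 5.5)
Your proof is correct and follows the paper's structure. The second step (a null set does not change the Lebesgue class) is the paper's argument. So is the third step: density of $C_c^\infty$, then multiplication by the cutoff $\eta_\eps$ of Lemma~\ref{Lem2.1} together with the multiplier bound. Your verification of that multiplier bound correctly fills in what the paper only cites.

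The one difference is in the measure-zero step. The paper uses the fractional Sobolev inequality, $|K|^{1/2_s^*}\le\|\varphi\|_{L^{2_s^*}}\le C[\varphi]_{H^s(\R^N)}$. This needs $N>2s$, but it would also work for a capacity defined through the Gagliardo seminorm alone. Your bound $|K|\le\|\varphi\|_{L^2}^2\le\|\varphi\|_{H^s}^2$ is more elementary and holds in every dimension, because Definition~\ref{Def1.1} uses the full inhomogeneous norm.
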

	
	\begin{proof}
		Let \(\varphi\in C_c^\infty(\R^N)\) be admissible in the definition of \(\Cap_{s,2}(K)\). Since \(\varphi\ge1\) in a neighborhood of \(K\), the fractional Sobolev inequality yields
		\[
		|K|^{1/2_s^*} \le \|\varphi\|_{L^{2_s^*}(\R^N)} \le C[\varphi]_{H^s(\R^N)} \le C\|\varphi\|_{H^s(\R^N)}.
		\]
		Taking the infimum over all admissible \(\varphi\) gives \(|K|=0\). Therefore two functions that differ only on \(K\) define the same Lebesgue class, and hence the same \(H^s\)-class whenever one of them belongs to \(H^s(\R^N)\).
		
		To prove the approximation assertion, let \(v\in H^s(\R^N)\). Choose \(g_j\in C_c^\infty(\R^N)\) such that \(g_j\to v\) in \(H^s(\R^N)\), as in \cite[Theorem 2.4]{DiNezzaPalatucciValdinoci}. For each fixed \(j\), let \(\eta_{\eps}\) be given by Lemma~\ref{Lem2.1}. Multiplication by \(g_j\) is continuous on \(H^s(\R^N)\), and hence
		\[
		\|g_j-g_j\eta_{\eps}\|_{H^s(\R^N)} = \|g_j(1-\eta_{\eps})\|_{H^s(\R^N)} \le C_j\|1-\eta_{\eps}\|_{H^s(\R^N)} \longrightarrow0.
		\]
		Since \(g_j\eta_{\eps}\in C_c^\infty(\R^N\setminus K)\), taking a diagonal sequence gives
		\[
		H^s(\R^N)=\overline{C_c^\infty(\R^N\setminus K)}^{\,H^s(\R^N)}.
		\]
		Thus sets of zero \((s,2)\)-capacity are removable for \(H^s\)-testing; see also \cite[Chapter 2]{AdamsHedberg}.
	\end{proof}
	
	\begin{lemma}\label{Lem2.3}
		If \(R_\lambda\) denotes reflection with respect to \(T_\lambda=\{x_1=\lambda\}\), then
		\[
		\Cap_{s,2}(K)=0 \quad\Longrightarrow\quad \Cap_{s,2}(R_\lambda K)=0.
		\]
	\end{lemma}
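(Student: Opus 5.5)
The proof uses the fact that $R_\lambda$ is an affine isometry of $\R^N$, and that the $H^s$ norm is invariant under composition with isometries. Take an admissible function $\varphi\in C_c^\infty(\R^N)$ for $K$ in Definition~\ref{Def1.1}, so $\varphi\ge1$ on an open neighborhood $U$ of $K$. Set $\tilde\varphi=\varphi\circ R_\lambda$. Since $R_\lambda$ is a smooth involution, $\tilde\varphi\in C_c^\infty(\R^N)$. Moreover $\tilde\varphi\ge1$ on $R_\lambda(U)$, which is open (as $R_\lambda$ is a homeomorphism) and contains $R_\lambda K$. Hence $\tilde\varphi$ is admissible for $R_\lambda K$. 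We also note that $R_\lambda K$ is compact, so the capacity is defined.

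Next we check $\|\tilde\varphi\|_{H^s(\R^N)}=\|\varphi\|_{H^s(\R^N)}$. Because $|\det DR_\lambda|=1$, the change of variables $x\mapsto R_\lambda x$ preserves the $L^2$ norm. For the Gagliardo seminorm, apply the change of variables $(x,y)\mapsto(R_\lambda x,R_\lambda y)$ on $\R^N\times\R^N$ and use $|R_\lambda x-R_\lambda y|=|x-y|$. This holds because $R_\lambda x-R_\lambda y=(-(x_1-y_1),x'-y')$. It gives
\[
[\tilde\varphi]_{H^s(\R^N)}^2=\iint\frac{|\varphi(R_\lambda x)-\varphi(R_\lambda y)|^2}{|x-y|^{N+2s}}\dd x\dd y=[\varphi]_{H^s(\R^N)}^2 .
\]

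Taking the infimum over admissible $\varphi$ then gives $\Cap_{s,2}(R_\lambda K)\le\Cap_{s,2}(K)=0$. Since $R_\lambda$ is an involution, the same argument gives the reverse inequality, so the two capacities are in fact equal. No step is a genuine obstacle. The only points needing care are that admissibility requires $\tilde\varphi\ge1$ on a \emph{neighborhood} of $R_\lambda K$, handled above by the openness of $R_\lambda(U)$, and that the Jacobian of the reflection has absolute value one.
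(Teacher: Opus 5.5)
Your proposal is correct and follows essentially the same route as the paper's proof. Both compose admissible functions with $R_\lambda$, use that the reflection is a measure-preserving isometry so the $H^s$ norm is unchanged and the condition $\varphi\ge1$ near $K$ carries over to $R_\lambda K$, take the infimum, and obtain the reverse inequality from $R_\lambda^2=\mathrm{Id}$.
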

	
	\begin{proof}
		Reflection preserves Lebesgue measure and distance, hence $\|\varphi\circ R_\lambda\|_{H^s}=\|\varphi\|_{H^s}$. If $\varphi\ge1$ near $K$, then $\varphi\circ R_\lambda\ge1$ near $R_\lambda K$. Taking infima in the capacity definition gives $\Cap_{s,2}(R_\lambda K)\le\Cap_{s,2}(K)$. Since $R_\lambda^2=\mathrm{Id}$, the reverse inequality also holds.
	\end{proof}
	
	\begin{lemma}\label{Lem2.4}
		Let \(0<\mu<N\), \(1<a,b<+\infty\), and
		\[
		\frac1a+\frac1b+\frac\mu N=2.
		\]
		Then
		\[
		\int_{\R^N}\int_{\R^N} \frac{|F(x)|\,|G(y)|}{|x-y|^\mu}\dd x\dd y \le C\|F\|_{L^a(\R^N)}\|G\|_{L^b(\R^N)}.
		\]
		The same estimate holds over measurable subsets after extending functions by zero.
	\end{lemma}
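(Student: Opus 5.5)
This is the classical Hardy--Littlewood--Sobolev inequality. The plan is to reduce the double integral to a pairing against a Riesz potential, then combine H\"older's inequality with the Hardy--Littlewood--Sobolev fractional integration theorem.

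First, write the double integral by Tonelli as
\[
\int_{\R^N}|G(y)|\,\bigl(K_\mu*|F|\bigr)(y)\dd y ,
\]
which is legitimate since the integrand is nonnegative. Recall that $K_\mu=c_{N,N-\mu}^{-1}I_{N-\mu}$, so $K_\mu*|F|$ is a constant multiple of the Riesz potential of order $\alpha=N-\mu\in(0,N)$. H\"older's inequality with the dual exponent $b'=b/(b-1)$ then gives
\[
\int_{\R^N}\int_{\R^N}\frac{|F(x)|\,|G(y)|}{|x-y|^\mu}\dd x\dd y \le \|G\|_{L^b}\,\bigl\|K_\mu*|F|\bigr\|_{L^{b'}} .
\]

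The remaining step is the fractional integration theorem: $\|I_\alpha f\|_{L^r}\le C\|f\|_{L^a}$ whenever $1<a<r<\infty$ and $1/r=1/a-\alpha/N$. The exponent bookkeeping works out as follows. The hypothesis $1/a+1/b+\mu/N=2$ rewrites as
\[
\frac1{b'}=1-\frac1b=\frac1a+\frac{\mu}{N}-1=\frac1a-\frac{N-\mu}{N},
\]
which is exactly the required relation with $r=b'$. Since $b\in(1,\infty)$, we have $b'\in(1,\infty)$. Moreover $1/b'<1/a$ because $\mu<N$, so $a<b'$. The condition $a>1$ is part of the hypotheses. Hence the theorem applies and yields the estimate with $C=C(N,\mu,a)$.

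The fractional integration theorem itself I would not reprove; I would cite it (Stein's theorem, or Lieb--Loss, Theorem 4.3). If one wanted to prove it, the standard route is Hedberg's pointwise bound $|I_\alpha f(x)|\le C\,(Mf(x))^{1-\alpha a/N}\|f\|_{L^a}^{\alpha a/N}$. This bound comes from splitting the kernel at a radius $R$ and optimizing in $R$. It is then combined with the $L^a$-boundedness of the maximal function $M$, which is exactly where $a>1$ is needed. This is the only genuinely nontrivial point of the argument.

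The final sentence of the statement, about measurable subsets, is immediate. Given measurable sets $A,B\subset\R^N$ and functions defined on them, set $\tilde F=F\one_A$ and $\tilde G=G\one_B$, extended by zero outside $A$ and $B$. Then the double integral over $A\times B$ equals the full-space integral for $\tilde F,\tilde G$. Also $\|\tilde F\|_{L^a(\R^N)}=\|F\|_{L^a(A)}$ and $\|\tilde G\|_{L^b(\R^N)}=\|G\|_{L^b(B)}$, so the full-space estimate applies with the same constant. No step is a real obstacle here beyond invoking the classical fractional integration theorem.
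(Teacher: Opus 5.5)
Your proposal is correct and essentially matches the paper, which simply cites the classical Hardy--Littlewood--Sobolev inequality (Lieb--Loss, Theorem 4.3) and handles measurable subsets by extending by zero. Your detour through H\"older duality and the Riesz-potential form of the fractional integration theorem is the standard equivalent formulation, and your exponent bookkeeping ($r=b'$, $1<a<b'<\infty$) is right.
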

	
	\begin{proof}
		This is the classical Hardy-Littlewood-Sobolev inequality; see \cite[Theorem 4.3]{LiebLoss}. If the functions are originally defined only on measurable subsets, one extends them by zero to \(\R^N\) and applies the full-space estimate.
	\end{proof}
	
	\begin{proposition}\label{Prop2.1}
		Let \(u\) be a positive weak singular solution satisfying condition \ref{Ass1.2}. Let \(E\subset\R^N\) be measurable and let \(Z\in\dot H^s(\R^N)\) be nonnegative with \(Z=0\) a.e. on \(\R^N\setminus E\). Then
		\begin{align}\label{eq2.1}
			\int_E\int_E \frac{u^{p-1}(y)Z(y)u^q(x)Z(x)}{|x-y|^\mu}\dd y\dd x\le C\|u\|_{L^{m_H}(E)}^{p+q-1} [Z]_{H^s(\R^N)}^2,
		\end{align}
		and
		\begin{align}\label{eq2.2}
			\int_E \left( \int_{\R^N}\frac{u^p(y)}{|x-y|^\mu}\dd y \right)u^{q-1}(x)Z^2(x)\dd x\le C\|u\|_{L^{m_H}(\R^N)}^p \|u\|_{L^{m_H}(E)}^{q-1} [Z]_{H^s(\R^N)}^2.
		\end{align}
		Estimate \eqref{eq2.2} remains valid, with the same constant, if \(u^p(y)\) is replaced by \(u_\lambda^p(y)\), since reflection preserves the \(L^{m_H}(\R^N)\)-norm.
	\end{proposition}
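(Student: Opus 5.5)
Both estimates follow from the Hardy--Littlewood--Sobolev inequality (Lemma~\ref{Lem2.4}), followed by H\"older's inequality and the fractional Sobolev inequality. The Sobolev inequality gives \(\|Z\|_{L^{2_s^*}}\le C_{\mathrm{Sob}}^{1/2}[Z]_{H^s}\). Because \(Z\) vanishes off \(E\), every product containing \(Z\) may be integrated over \(E\) only. This is what places the \(L^{m_H}(E)\) norms on the right-hand side.

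For \eqref{eq2.1}, apply Lemma~\ref{Lem2.4} with \(F=u^{p-1}Z\one_E\) and \(G=u^qZ\one_E\). The exponents are fixed by H\"older with one factor in \(L^{2_s^*}\) in each slot:
\[
\frac1a=\frac{p-1}{m_H}+\frac1{2_s^*},\qquad \frac1b=\frac{q}{m_H}+\frac1{2_s^*}.
\]
Then \(\|F\|_{L^a}\le\|u\|_{L^{m_H}(E)}^{p-1}\|Z\|_{L^{2_s^*}}\) and \(\|G\|_{L^b}\le\|u\|_{L^{m_H}(E)}^{q}\|Z\|_{L^{2_s^*}}\). The HLS identity \(1/a+1/b+\mu/N=2\) is exactly the identity \((p+q-1)/m_H+2/2_s^*+\mu/N=2\) recorded after Definition~\ref{Def1.4}. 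Combining these bounds with Sobolev gives \eqref{eq2.1}.

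For \eqref{eq2.2}, apply Lemma~\ref{Lem2.4} with \(F=u^p\) on all of \(\R^N\) and \(G=u^{q-1}Z^2\one_E\). The exponents are now
\[
\frac1a=\frac{p}{m_H},\qquad \frac1b=\frac{q-1}{m_H}+\frac{2}{2_s^*},
\]
and the same identity gives \(1/a+1/b+\mu/N=2\). H\"older yields \(\|u^p\|_{L^a}=\|u\|_{L^{m_H}}^p\) and \(\|G\|_{L^b}\le\|u\|_{L^{m_H}(E)}^{q-1}\|Z\|_{L^{2_s^*}}^2\). Replacing \(u^p\) by \(u_\lambda^p\) changes nothing, since \(\|u_\lambda\|_{L^{m_H}(\R^N)}=\|u\|_{L^{m_H}(\R^N)}\) because reflection preserves measure.

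The one point requiring care is that Lemma~\ref{Lem2.4} applies only when all four exponents lie in \((1,\infty)\). Each reciprocal is positive, so it remains to check that each is below \(1\):
\begin{itemize}
\item \(1/a<1\) in the second estimate means \(m_H>p\);
\item \(1/a<1\) in the first estimate means \(m_H>(p-1)2_s^*/(2_s^*-1)\);
\item \(1/b<1\) in the first estimate means \(m_H>q2_s^*/(2_s^*-1)\);
\item \(1/b<1\) in the second estimate means \(m_H>(q-1)2_s^*/(2_s^*-2)\).
\end{itemize}
These are precisely the four lower bounds in \eqref{eq1.4}, which are part of \ref{Ass1.2}. The assumptions \(p,q>1\) keep all powers nonnegative, so H\"older applies directly. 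Finally, if \(\|u\|_{L^{m_H}(E)}\) is infinite the inequalities hold trivially, although under \ref{Ass1.2} it is finite.
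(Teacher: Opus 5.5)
Your proposal is correct and matches the paper's proof essentially step for step. You use the same HLS exponents (\(1/a_1=(p-1)/m_H+1/2_s^*\), \(1/b_1=q/m_H+1/2_s^*\), \(a_2=m_H/p\), \(1/b_2=(q-1)/m_H+2/2_s^*\)), the same H\"older splitting with one \(L^{2_s^*}\) factor of \(Z\) per slot followed by Sobolev, and the same identification of the four lower bounds in \eqref{eq1.4} with the requirement that these exponents lie in \((1,\infty)\).
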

	
	\begin{proof}
		We first prove \eqref{eq2.1}. Set
		\[
		\frac1{a_1}=\frac{p-1}{m_H}+\frac1{2_s^*}, \qquad \frac1{b_1}=\frac{q}{m_H}+\frac1{2_s^*}.
		\]
		By \eqref{eq1.4}, \(a_1,b_1\in(1,+\infty)\). Moreover, by \eqref{eq1.3},
		\[
		\frac1{a_1}+\frac1{b_1}+\frac\mu N = \frac{p+q-1}{m_H}+\frac2{2_s^*}+\frac\mu N =2.
		\]
		Applying Lemma~\ref{Lem2.4} to
		\[
		F(y)=u^{p-1}(y)Z(y)\one_E(y), \qquad G(x)=u^q(x)Z(x)\one_E(x),
		\]
		we obtain
		\[
		\int_E\int_E \frac{u^{p-1}(y)Z(y)u^q(x)Z(x)}{|x-y|^\mu}\dd y\dd x \le C\|u^{p-1}Z\|_{L^{a_1}(E)} \|u^qZ\|_{L^{b_1}(E)}.
		\]
		H\"older's inequality gives
		\[
		\|u^{p-1}Z\|_{L^{a_1}(E)} \le \|u\|_{L^{m_H}(E)}^{p-1} \|Z\|_{L^{2_s^*}(\R^N)},
		\]
		and
		\[
		\|u^qZ\|_{L^{b_1}(E)} \le \|u\|_{L^{m_H}(E)}^q \|Z\|_{L^{2_s^*}(\R^N)}.
		\]
		The fractional Sobolev inequality implies
		\[
		\|Z\|_{L^{2_s^*}(\R^N)} \le C[Z]_{H^s(\R^N)},
		\]
		and therefore \eqref{eq2.1} follows.
		
		We next prove \eqref{eq2.2}. Set
		\[
		a_2=\frac{m_H}{p}, \qquad \frac1{b_2}=\frac{q-1}{m_H}+\frac2{2_s^*}.
		\]
		By \eqref{eq1.4}, \(a_2,b_2\in(1,+\infty)\), and
		\[
		\frac1{a_2}+\frac1{b_2}+\frac\mu N = \frac{p+q-1}{m_H}+\frac2{2_s^*}+\frac\mu N =2.
		\]
		Thus Lemma~\ref{Lem2.4}, applied to
		\[
		F(y)=u^p(y), \qquad G(x)=u^{q-1}(x)Z^2(x)\one_E(x),
		\]
		gives
		\[
		\int_E \left( \int_{\R^N}\frac{u^p(y)}{|x-y|^\mu}\dd y \right)u^{q-1}(x)Z^2(x)\dd x \le C\|u^p\|_{L^{a_2}(\R^N)} \|u^{q-1}Z^2\|_{L^{b_2}(E)}.
		\]
		Since \(a_2=m_H/p\),
		\[
		\|u^p\|_{L^{a_2}(\R^N)}= \|u\|_{L^{m_H}(\R^N)}^p.
		\]
		Again by H\"older's inequality,
		\[
		\|u^{q-1}Z^2\|_{L^{b_2}(E)} \le \|u\|_{L^{m_H}(E)}^{q-1} \|Z\|_{L^{2_s^*}(\R^N)}^2 \le C\|u\|_{L^{m_H}(E)}^{q-1}[Z]_{H^s(\R^N)}^2.
		\]
		This proves \eqref{eq2.2}. Finally, if \(u^p\) is replaced by \(u_\lambda^p\), then
		\[
		\|u_\lambda\|_{L^{m_H}(\R^N)}= \|u\|_{L^{m_H}(\R^N)},
		\]
		because reflection is measure preserving. Hence the same proof gives the reflected estimate with the same constant.
	\end{proof}
	
	\begin{lemma}\label{Lem2.5}
		Let \(u\) be a positive weak singular solution satisfying condition \ref{Ass1.2}. Then \(\Hs[u]\) is locally bounded in \(\R^N\setminus\Gamma\). In fact, if
		\[
		\beta_B= \min\left\{1,\,N-\mu-\frac{Np}{m_B}\right\},
		\]
		then \(\beta_B>0\) and
		\[
		\Hs[u]\in C^\beta_{\loc}(\R^N\setminus\Gamma) \qquad\text{for every }0<\beta<\beta_B.
		\]
	\end{lemma}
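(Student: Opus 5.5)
The plan is to write \(\Hs[u]=K_\mu*u^p\), with \(u^p\) extended by zero across \(\Gamma\) (harmless since \(|\Gamma|=0\) by Lemma~\ref{Lem2.2}). I will use only \(u\in L^{m_B}(\R^N)\) and the a.e. finiteness of \(\Hs[u]\) from Definition~\ref{Def1.2}; the exponent \(m_H\) is not needed here.

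\emph{Positivity of \(\beta_B\).} By \eqref{eq1.5}, \(m_B>pN/(N-\mu)\), which is equivalent to \(N-\mu-Np/m_B>0\). Hence \(\beta_B>0\).

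\emph{Integrability at infinity.} Set \(r=m_B/p\), so \(u^p\in L^r(\R^N)\) with \(r>1\). The condition \(m_B>pN/(N-\mu)\) is equivalent to \(\mu r'<N\). Hence the truncated kernel \(K_\mu\one_{B_R}\) lies in \(L^{r'}(\R^N)\) for every \(R\). Fix a point \(x_0\in\R^N\setminus\Gamma\) with \(\Hs[u](x_0)<\infty\); such points exist by Definition~\ref{Def1.2}. For \(|y-x_0|\ge1\) we have \(|x_0-y|\le C(1+|y|)\), so
\[
\int_{\R^N}\frac{u^p(y)}{(1+|y|)^\mu}\dd y<\infty,
\]
where the part of the integral over \(B_1(x_0)\) is finite by Hölder with \(u^p\in L^r\).

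\emph{Splitting and the near part.} Fix a ball \(B_1(z)\). For \(x\in B_1(z)\) split
\[
\Hs[u](x)=\int_{B_3(z)}\frac{u^p(y)}{|x-y|^\mu}\dd y+\int_{\R^N\setminus B_3(z)}\frac{u^p(y)}{|x-y|^\mu}\dd y=:V_1(x)+V_2(x).
\]
For \(V_1\), we have \(V_1=(K_\mu\one_{B_4})*(u^p\one_{B_3(z)})\) on \(B_1(z)\). Hölder gives \(\|V_1\|_{L^\infty(B_1(z))}\le C\|u\|^p_{L^{m_B}}\). For the Hölder seminorm, a direct computation with \(k=K_\mu\one_{B_4}\) gives, for \(|h|\le1\),
\[
\|k(\cdot+h)-k\|_{L^{r'}}\le C|h|^{\min\{1,\,N/r'-\mu\}-\delta},
\]
with an arbitrary small loss \(\delta>0\) when the minimum equals \(1\). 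Here \(N/r'-\mu=N-\mu-Np/m_B\), which explains the threshold \(\beta_B\). The estimate comes from splitting the region into \(|y|<2|h|\), where both terms are controlled by \(\int_{B_{3|h|}}|y|^{-\mu r'}\), and \(|y|\ge2|h|\), where the mean value bound \(|h||y|^{-\mu-1}\) applies. The truncation at radius \(4\) adds a boundary term of order \(|h|^{1/r'}\), which can be avoided by using a smooth cutoff; it is at least of order \(|h|\) in any case.

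\emph{Far part and conclusion.} For \(V_2\), with \(x,x'\in B_1(z)\) and \(|y-z|\ge3\), we have \(|x-y|\ge2\) and \(|x-y|\simeq 1+|y|\), with constants depending on \(z\). Thus
\[
|V_2(x)-V_2(x')|\le C|x-x'|\int\frac{u^p(y)}{(1+|y|)^{\mu+1}}\dd y,
\]
which is finite by the integrability at infinity above, and similarly \(V_2\) is bounded on \(B_1(z)\). So \(V_2\) is Lipschitz on \(B_1(z)\).

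Combining the two parts, \(\Hs[u]\) is bounded and \(C^\beta\) on every ball, for every \(\beta<\beta_B\). This is in fact a statement on all of \(\R^N\), which in particular gives the claim on \(\R^N\setminus\Gamma\). The only delicate step is the \(L^{r'}\) modulus-of-continuity estimate for the truncated Riesz kernel. The main conceptual point is to control the tail using the a.e. finiteness of \(\Hs[u]\), rather than a global Lebesgue exponent, because \(|x|^{-\mu}\) need not be in \(L^{r'}\) near infinity.
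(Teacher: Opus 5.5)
Your proof is correct and follows essentially the same route as the paper. Both arguments derive weighted tail integrability of $u^p$ from finiteness of $\Hs[u]$ at one point, and both bound the near part by H\"older's inequality with $u^p\in L^{m_B/p}$ against the locally $L^{r'}$ kernel, with a splitting at scale $2|h|$ plus the mean value theorem giving the H\"older modulus. The far part is handled by a mean-value (Lipschitz) bound in both. Your observation that the estimate holds on all of $\R^N$ is also right.

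One small correction concerns your remark that the boundary term from the sharp truncation $K_\mu\one_{B_4}$ is ``at least of order $|h|$ in any case.'' That remark does not rescue the sharp truncation. The bound through $\|k(\cdot+h)-k\|_{L^{r'}(\R^N)}$ then carries a term of order $|h|^{1/r'}$, and $1/r'<\beta_B$ does occur, so the smooth cutoff is needed rather than optional. Alternatively, as in the paper, estimate $\int_{B_3(z)}u^p(y)\bigl||x-y|^{-\mu}-|x'-y|^{-\mu}\bigr|\dd y$ directly. No truncation is seen there, since $|x-y|,|x'-y|<4$.
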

	
	\begin{proof}
		Since \(\Hs[u]\) is finite a.e. in \(\R^N\setminus\Gamma\), we may choose \(x_*\in\R^N\setminus\Gamma\) such that \({\Hs[u](x_*)<+\infty}\). Let \(R_*>2|x_*|+2\). For \(|y|>R_*\) one has
		\[
		1+|y|^\mu\ge c|x_*-y|^\mu,
		\]
		and hence
		\[
		\int_{(\R^N\setminus\Gamma)\setminus B_{R_*}} \frac{u^p(y)}{1+|y|^\mu}\dd y \le C\Hs[u](x_*)<+\infty.
		\]
		On \(B_{R_*}\setminus\Gamma\), the same finiteness gives \(u^p\in L^1(B_{R_*}\setminus\Gamma)\), because \(|x_*-y|\) is bounded above there and
		\[
		u^p(y)\le C_{R_*}\frac{u^p(y)}{|x_*-y|^\mu} \quad\text{for a.e. }y\in B_{R_*}\setminus(\Gamma\cup\{x_*\}).
		\]
		Consequently,
		\[
		\int_{\R^N\setminus\Gamma}\frac{u^p(y)}{1+|y|^\mu}\dd y<+\infty.
		\]
		
		Now let \(K\Subset \R^N\setminus\Gamma\). Choose \(\rho>0\) such that
		\[
		U_\rho(K)=\{y\in\R^N:\dist(y,K)<\rho\}\Subset\R^N\setminus\Gamma.
		\]
		By \eqref{eq1.5} and \eqref{eq1.6}, \(u\in L^{m_B}(\R^N)\) and
		\[
		a:=\frac{m_B}{p}>\frac{N}{N-\mu}.
		\]
		Let \(a'\) be the conjugate exponent of \(a\). Then \(\mu a'<N\). For \(x\in K\), split
		\[
		\Hs[u](x)= \int_{U_\rho(K)}\frac{u^p(y)}{|x-y|^\mu}\dd y + \int_{(\R^N\setminus\Gamma)\setminus U_\rho(K)} \frac{u^p(y)}{|x-y|^\mu}\dd y =:J_1(x)+J_2(x).
		\]
		For the local part, H\"older's inequality gives
		\[
		J_1(x) \le \|u^p\|_{L^a(U_\rho(K))} \left( \int_{U_\rho(K)}\frac{\dd y}{|x-y|^{\mu a'}} \right)^{1/a'}.
		\]
		Since \(\mu a'<N\), the last integral is uniformly bounded for \(x\in K\). Therefore
		\[
		\sup_{x\in K}J_1(x)<+\infty.
		\]
		
		For the nonlocal part, choose \(R>1\) so large that \(U_\rho(K)\subset B_R\). If \(x\in K\) and \(y\in B_R\setminus U_\rho(K)\), then \(|x-y|\ge\rho\), and hence
		\[
		\int_{B_R\setminus(\Gamma\cup U_\rho(K))} \frac{u^p(y)}{|x-y|^\mu}\dd y \le \rho^{-\mu}\int_{B_R\setminus\Gamma}u^p(y)\dd y<+\infty.
		\]
		Increasing \(R\), if necessary, we also have \(|x-y|\ge |y|/2\) for \(x\in K\) and \(|y|>R\). Therefore
		\[
		\int_{(\R^N\setminus\Gamma)\setminus B_R} \frac{u^p(y)}{|x-y|^\mu}\dd y \le C\int_{\R^N\setminus\Gamma} \frac{u^p(y)}{1+|y|^\mu}\dd y<+\infty.
		\]
		Combining the preceding estimates gives \(\sup_K\Hs[u]<+\infty\).
		
		For the H\"older refinement, put
		\[
		F=u^p\one_{\R^N\setminus\Gamma}, \qquad a=\frac{m_B}{p}, \qquad \delta=N-\mu-\frac Na>0.
		\]
		Then \(F\in L^a(\R^N)\), and the weighted tail estimate proved above holds for \(F\). For \(x,z\) in a fixed compact subset of \(\R^N\setminus\Gamma\), set \(h=|x-z|\). Splitting a fixed bounded integration region into the \(2h\)-neighborhoods of \(x,z\) and their complement, H\"older's inequality and the mean value theorem give
		\[
		\int_{B_R}F(y) \left| \frac1{|x-y|^\mu}-\frac1{|z-y|^\mu} \right|\dd y \le C
		\begin{cases}
			h^\delta, & 0<\delta<1,\\
			h(1+|\log h|)^{1/a'}, & \delta=1,\\
			h, & \delta>1.
		\end{cases}
		\]
		For \(R\) sufficiently large, the mean value theorem also gives
		\[
		\int_{(\R^N\setminus\Gamma)\setminus B_R}F(y) \left| \frac1{|x-y|^\mu}-\frac1{|z-y|^\mu} \right|\dd y \le Ch\int_{\R^N\setminus\Gamma} \frac{u^p(y)}{1+|y|^\mu}\dd y.
		\]
		Thus \(\Hs[u]\in C^\beta_{\loc}\) for every \(0<\beta<\min\{1,\delta\}=\beta_B\). Since \(K\Subset\R^N\setminus\Gamma\) was arbitrary, the proof is complete.
	\end{proof}
	
	\begin{proposition}\label{Prop2.2}
		Let \(u\) be a positive weak singular solution satisfying condition \ref{Ass1.2}. Set
		\[
		r_B=\frac{m_B}{q}, \qquad \alpha_B=\min\left\{2s-\frac{N}{r_B},1\right\}.
		\]
		Then
		\[
		\Hs[u]u^q\in L^{r_B}_{\loc}(\R^N\setminus\Gamma), \qquad r_B>\frac{N}{2s}.
		\]
		Consequently, \(u\) has a representative satisfying
		\[
		u\in C^\alpha_{\loc}(\R^N\setminus\Gamma)
		\]
		for every \(0<\alpha<\alpha_B\). This representative is unique. Set
		\[
		\gamma_B=\min\{\alpha_B,\beta_B\},
		\]
		where \(\beta_B\) is defined in Lemma~\ref{Lem2.5}. For every \(0<\gamma<\gamma_B\), this representative further satisfies
		\[
		\Hs[u]u^q\in C^\gamma_{\loc}(\R^N\setminus\Gamma).
		\]
		Consequently, whenever \(2s+\gamma\notin\N\),
		\[
		u\in C^{2s+\gamma}_{\loc}(\R^N\setminus\Gamma),
		\]
		with the usual noninteger H\"older-space interpretation.
	\end{proposition}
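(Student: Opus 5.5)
The plan is to combine the local bound on the potential from Lemma~\ref{Lem2.5} with the integrability $u\in L^{m_B}$, and then invoke standard local regularity for the fractional Laplacian with a right-hand side in $L^r$, $r>N/(2s)$, followed by Schauder-type estimates.

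\emph{Step 1: integrability of the right-hand side.} Fix $K\Subset\R^N\setminus\Gamma$. By Lemma~\ref{Lem2.5}, $\Hs[u]\le C_K$ on $K$. Since $u\in L^{m_B}(\R^N)$, we have $u^q\in L^{m_B/q}=L^{r_B}$, so $\Hs[u]u^q\in L^{r_B}(K)$. Condition \eqref{eq1.5} gives $m_B>qN/(2s)$, which is exactly $r_B>N/(2s)$. Note also that $r_B>1$.

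\emph{Step 2: H\"older continuity of $u$.} Take a ball $B_{2r}(x_0)\Subset\R^N\setminus\Gamma$ and a cutoff $\chi\in C_c^\infty(B_{2r})$ with $\chi=1$ on $B_{3r/2}$. Write $f=\Hs[u]u^q$ and decompose $u=v+h$ on $B_r$, where
\[
v=c_{N,s}|\cdot|^{2s-N}*(\chi f).
\]
Since $\chi f\in L^{r_B}$ has compact support and $r_B>N/(2s)$, Hölder's inequality on the Riesz kernel gives $v\in C^{\alpha}$ for $\alpha<\min\{2s-N/r_B,1\}=\alpha_B$. When $2s-N/r_B\ge1$ one gets at least $C^{\alpha}$ for every $\alpha<1$, and that suffices.

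The remainder $h=u-v$ is then a distributional solution of $(-\Delta)^s h=0$ in $B_{3r/2}$ with $h\in\Ls(\R^N)$. Here $v\in\Ls$ because it decays like $|x|^{2s-N}$. The main obstacle is justifying that a weak $s$-harmonic function in the distributional sense, tested against $C_c^\infty$, is smooth in the interior. For this we use the Poisson-kernel representation for the ball, or mollification: $h*\rho_\eps$ is smooth and $s$-harmonic, and interior estimates in terms of the $\Ls$ tail norm pass to the limit. This gives $h\in C^\infty(B_r)$.

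Uniqueness of the representative is immediate, since two continuous functions equal a.e. on an open set coincide.

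\emph{Step 3: bootstrap.} With $u$ now $C^\alpha_{\loc}$ and positive, $u^q\in C^\alpha_{\loc}$ (it is locally bounded, and $t\mapsto t^q$ is locally Lipschitz on bounded ranges). Lemma~\ref{Lem2.5} gives $\Hs[u]\in C^\beta_{\loc}$ for $\beta<\beta_B$, so the product $f\in C^\gamma_{\loc}$ for $\gamma<\gamma_B$.

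Repeating the splitting with $\chi f\in C^\gamma_c$, the Riesz potential $v$ lies in $C^{2s+\gamma}$ locally when $2s+\gamma\notin\N$; this is the classical Schauder estimate for $I_{2s}$ acting on compactly supported H\"older functions, as in Silvestre's estimates. The harmonic part $h$ is smooth. Hence $u\in C^{2s+\gamma}_{\loc}(\R^N\setminus\Gamma)$.
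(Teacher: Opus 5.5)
Your proposal is correct and follows the paper's proof essentially step for step. The paper uses the same three steps: local boundedness of $\Hs[u]$ (Lemma~\ref{Lem2.5}) combined with $u\in L^{m_B}$ to get the $L^{r_B}_{\loc}$ source; the splitting $u=I_{2s}(\chi f)+h$, with $h\in\Ls(\R^N)$ $s$-harmonic and smooth in the interior (the paper simply cites Fall for this), to get the $C^\alpha$ representative; and the bootstrap through $C^\gamma$ regularity of $\Hs[u]u^q$ and the Schauder estimate for the Riesz potential to get $C^{2s+\gamma}$. The only cosmetic point is that the continuous representative is a priori only nonnegative, not positive, which is harmless because $t\mapsto t^q$ is locally Lipschitz on $[0,+\infty)$ for $q>1$.
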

	
	\begin{proof}
		Let \(\Omega\Subset\R^N\setminus\Gamma\). By Lemma~\ref{Lem2.5},
		\[
		\Hs[u]\in L^\infty(\Omega).
		\]
		By \eqref{eq1.5} and \eqref{eq1.6}, \(u\in L^{m_B}(\Omega)\), and therefore
		\[
		u^q\in L^{m_B/q}(\Omega).
		\]
		The condition \(m_B>qN/(2s)\) in \eqref{eq1.5} gives
		\[
		r_B>\frac{N}{2s}.
		\]
		Thus
		\[
		\Hs[u]u^q\in L^{r_B}(\Omega)
		\]
		for every \(\Omega\Subset\R^N\setminus\Gamma\), which proves the first assertion.
		
		In every \(\Omega\Subset\R^N\setminus\Gamma\), the function \(u\) satisfies
		\[
		(-\Delta)^s u=f \quad\text{in }\Omega, \qquad f:=\Hs[u]u^q\in L^{r_B}(\Omega), \qquad r_B>\frac{N}{2s}.
		\]
		Fix
		\[
		\Omega'\Subset\Omega_1\Subset\Omega
		\]
		and choose \(\zeta\in C_c^\infty(\Omega)\) with \(\zeta\equiv1\) on \(\Omega_1\). Extend \(\zeta f\) by zero outside \(\Omega\), and define its Riesz potential
		\[
		P(x)= \kappa_{N,s}\int_{\R^N} \frac{\zeta(y)f(y)}{|x-y|^{N-2s}}\dd y.
		\]
		Here $\kappa_{N,s}=c_{N,2s}$ is normalized so that $(-\Delta)^s I_{2s}f=f$. Since \(\zeta f\in L^{r_B}(\R^N)\) has compact support and \(r_B>N/(2s)\), the standard Riesz-potential estimate gives
		\[
		P\in C^\alpha_{\loc}(\R^N) \qquad\text{for every } 0<\alpha<\min\left\{2s-\frac{N}{r_B},1\right\}.
		\]
		Moreover, \(P\in\Ls(\R^N)\) and
		\[
		(-\Delta)^sP=\zeta f \quad\text{in }\mathcal D'(\R^N).
		\]
		Consequently \(h:=u-P\) belongs to \(\Ls(\R^N)\) and is \(s\)-harmonic in \(\Omega_1\). Interior regularity for \(s\)-harmonic functions \cite[Section 2]{Fall2016} gives \(h\in C^\infty(\Omega')\). Hence
		\[
		u=P+h\in C^\alpha(\Omega')
		\]
		for every \(0<\alpha<\alpha_B\). This gives the desired local H\"older representative. If two continuous representatives agree almost everywhere, they agree everywhere, which proves uniqueness.
		
		Finally, fix \(0<\gamma<\gamma_B\). Lemma~\ref{Lem2.5} and the first part of the proposition give
		\[
		\Hs[u]\in C^\gamma_{\loc}, \qquad u\in C^\gamma_{\loc}.
		\]
		The continuous representative is nonnegative everywhere, and \(t\mapsto t^q\) is locally Lipschitz on \([0,+\infty)\). Therefore \(f=\Hs[u]u^q\in C^\gamma_{\loc}\). With the same cutoff as above, $\zeta f\in C_c^\gamma(\R^N)$. We retain the already defined potential $P$ and smooth remainder $h=u-P$. The interior Schauder estimate \cite[Corollary 2.4]{RosOtonSerra} (see also the potential argument in \cite[Proposition 2.8]{Silvestre}) gives
		\[
		P\in C^{2s+\gamma}(\Omega') \qquad\text{whenever }2s+\gamma\notin\N.
		\]
		To apply its smooth formulation, mollify $\zeta f$ and the corresponding potential $P$. The weighted $L^1$-norms and local $C^\gamma$-norms of the mollified potentials, as well as the $C^\gamma$-norms of their sources, are uniformly bounded; passage to the limit gives the assertion. Since $h$ is smooth in $\Omega'$, we conclude that $u\in C^{2s+\gamma}_{\loc}(\R^N\setminus\Gamma)$.
		
	\end{proof}
	
	\begin{lemma}\label{Lem2.6}
		Let \(u\) be a positive weak singular solution satisfying condition \ref{Ass1.2}, and define
		\begin{equation}\label{eq2.3}
			\mathcal M(E)= \|u\|_{L^{m_H}(E)}^{p+q-1} + \|u\|_{L^{m_H}(\R^N)}^p \|u\|_{L^{m_H}(E)}^{q-1}.
		\end{equation}
		Then the following hold:
		\begin{enumerate}[label=\textup{(\roman*)}]
			\item if \(E_j\subset B_R\) and \(|E_j|\to0\), then \(\mathcal M(E_j)\to0\);
			\item \(\mathcal M(\{x_1<\lambda\})\to0\) as \(\lambda\to-\infty\);
			\item \(\mathcal M(\{x_1>\lambda\})\to0\) as \(\lambda\to+\infty\);
			\item \(\mathcal M(\R^N\setminus B_R)\to0\) as \(R\to+\infty\).
		\end{enumerate}
	\end{lemma}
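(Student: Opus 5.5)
The whole lemma reduces to the absolute continuity of the integral \(E\mapsto\int_E u^{m_H}\,dx\), applied to a single fixed function. Since \(u\in\mathfrak A(\R^N)\), we have \(u^{m_H}\in L^1(\R^N)\). The exponents appearing in \eqref{eq2.3} are \(p+q-1>0\) and \(q-1>0\), both positive because \(p,q>1\). The factor \(\|u\|_{L^{m_H}(\R^N)}^p\) is a fixed finite constant. Hence
\[
\mathcal M(E)\le \|u\|_{L^{m_H}(E)}^{p+q-1}+\|u\|_{L^{m_H}(\R^N)}^{p}\,\|u\|_{L^{m_H}(E)}^{q-1},
\]
and \(\mathcal M(E)\to0\) as soon as \(\int_E u^{m_H}\dd x\to0\), because \(t\mapsto t^{a}\) is continuous at \(0\) for \(a>0\). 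It therefore suffices to show that \(\int_{E}u^{m_H}\to0\) along each of the four families of sets.

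For (i), I would use the absolute continuity of the Lebesgue integral of the integrable function \(u^{m_H}\). Given \(\eps>0\), there is \(\delta>0\) such that \(|E|<\delta\) implies \(\int_E u^{m_H}<\eps\). Since \(|E_j|\to0\), this applies to all large \(j\). The hypothesis \(E_j\subset B_R\) is not even needed here.

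For (ii)–(iv), I would invoke dominated convergence. For (ii), the indicators \(\one_{\{x_1<\lambda\}}u^{m_H}\) are dominated by \(u^{m_H}\in L^1\) and converge pointwise to \(0\) as \(\lambda\to-\infty\), since each fixed \(x\) eventually lies outside \(\{x_1<\lambda\}\). Case (iii) is identical with \(\lambda\to+\infty\). For (iv), the functions \(\one_{\R^N\setminus B_R}u^{m_H}\) decrease to \(0\) pointwise as \(R\to+\infty\) and are dominated by \(u^{m_H}\). Monotonicity in the parameter makes the passage from sequences to the continuous limits immediate.

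I do not expect a real obstacle here. The only points to check are that \(u\in L^{m_H}(\R^N)\) is part of the admissibility assumption \ref{Ass1.2}, that \(m_H<\infty\), and that the exponents \(p+q-1\) and \(q-1\) are strictly positive. Positivity of the exponents is exactly what makes the power functions continuous at \(0\), so that smallness of \(\|u\|_{L^{m_H}(E)}\) transfers to smallness of \(\mathcal M(E)\).
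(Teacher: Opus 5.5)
Your proposal is correct and follows the paper's proof: both reduce all four items to the absolute continuity of \(E\mapsto\int_E u^{m_H}\dd x\) for the integrable function \(u^{m_H}\), using that the exponents \(p+q-1\) and \(q-1\) are positive. The paper phrases (ii)--(iv) as the sets decreasing to the empty set, which is the same dominated/monotone convergence argument you give.
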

	
	\begin{proof}
		All four assertions follow from absolute continuity of the integral of $u^{m_H}$. In \textup{(i)}, the measures of the sets tend to zero; in \textup{(ii)}-\textup{(iv)}, the sets decrease to the empty set. Thus the corresponding $L^{m_H}$-norms tend to zero. Both powers $p+q-1$ and $q-1$ in \eqref{eq2.3} are positive.
	\end{proof}
	
	\begin{lemma}\label{Lem2.7}
		Let \(u\) be a positive weak singular solution satisfying condition \ref{Ass1.2}. Set \(V=\Hs[u]\). Then \(V\in L^\infty(\R^N)\) and \(Vu^q\in L^{r_B}(\R^N)\), where \(r_B=m_B/q>N/(2s)\). For every \(\delta>0\),
		\[
		\sup_{\dist(x,\Gamma)\ge\delta}u(x)<\infty, \qquad u\in L^{2_s^*}\bigl(\{x:\dist(x,\Gamma)\ge\delta\}\bigr).
		\]
		Moreover, \(u(x)\to0\) as \(|x|\to\infty\).
	\end{lemma}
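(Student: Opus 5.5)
We would prove the four assertions in order, reusing the splitting from the proof of Lemma~\ref{Lem2.5} but now with uniform global control.

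\emph{Step 1: $V\in L^\infty(\R^N)$.} Fix $x\in\R^N$ and split $V(x)=\int_{B_1(x)}u^p|x-y|^{-\mu}\dd y+\int_{\R^N\setminus B_1(x)}u^p|x-y|^{-\mu}\dd y$. With $a=m_B/p>N/(N-\mu)$, H\"older's inequality bounds the first piece by $\|u\|_{L^{m_B}}^p\,\||y|^{-\mu}\|_{L^{a'}(B_1)}$, which is finite because $\mu a'<N$. For the second piece we use $|x-y|^{-\mu}\le1$ and $u^p\le u^{m_H}+u^{m_B}$, since $p$ lies between $m_H$ and $m_B$; indeed $m_H>p$ by \eqref{eq1.4} and $m_B>pN/(N-\mu)>p$. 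This bounds the second piece by $\|u\|_{L^{m_H}}^{m_H}+\|u\|_{L^{m_B}}^{m_B}$, uniformly in $x$. Then $Vu^q\in L^{r_B}(\R^N)$ follows from $u^q\in L^{m_B/q}(\R^N)$, and $r_B>N/(2s)$ is \eqref{eq1.5}.

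\emph{Step 2: uniform bound away from $\Gamma$.} This is the main obstacle, since Proposition~\ref{Prop2.2} only gives local bounds with constants depending on the compact set. Fix $\delta>0$ and $x_0$ with $\dist(x_0,\Gamma)\ge\delta$. On $B_{\delta/2}(x_0)$ the equation $(-\Delta)^su=f:=Vu^q$ holds, and $\|f\|_{L^{r_B}}$ is globally bounded by Step~1. We would use a local $L^\infty$ estimate for nonnegative weak (super/sub)solutions of the fractional Laplacian, a De Giorgi or Moser type bound:
\[
\sup_{B_{\delta/4}(x_0)}u\le C\Bigl(\|u\|_{L^{m_B}(B_{\delta/2}(x_0))}+\mathrm{Tail}(u;x_0,\delta/2)+\|f\|_{L^{r_B}(B_{\delta/2}(x_0))}\Bigr),
\]
where $C=C(N,s,\delta)$. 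Alternatively we can decompose as in Proposition~\ref{Prop2.2}: write $u=P+h$ on $B_{\delta/2}(x_0)$, bound $P$ by the Riesz-potential estimate, and bound the $s$-harmonic part $h$ by its local $L^1$ norm plus its tail, using the Poisson kernel.

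To make the tail uniform in $x_0$, we split it into two regions. Over $\{|y-x_0|\ge\delta/2\}$ intersected with $L^{m_B}$-integrability, it is bounded by $C(\delta)\|u\|_{L^{m_B}}$, because $|y-x_0|^{-N-2s}\in L^{m_B'}$ away from the origin. The near-$\Gamma$ part is handled by the same estimate, since $u\in L^{m_B}(\R^N)$ globally and only translation-invariant kernel bounds are used. Hence $\sup_{\dist(x,\Gamma)\ge\delta}u<\infty$.

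\emph{Step 3: $L^{2_s^*}$ integrability.} On $\{\dist(\cdot,\Gamma)\ge\delta\}$ we have $u\le M_\delta$ and $u\in L^{m_H}$. Since $m_H<2_s^*$, this gives $\int u^{2_s^*}\le M_\delta^{2_s^*-m_H}\|u\|_{L^{m_H}}^{m_H}$.

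\emph{Step 4: decay at infinity.} We apply the Step~2 estimate with $\delta=1$ and $|x_0|$ large, so that $B_{1/2}(x_0)$ avoids $\Gamma$. The local norms $\|u\|_{L^{m_B}(B_{1/2}(x_0))}$ and $\|f\|_{L^{r_B}(B_{1/2}(x_0))}$ tend to $0$ by absolute continuity of the integrals. The tail also tends to $0$: split it at a large radius $R$, use absolute continuity of $\|u\|_{L^{m_B}(\R^N\setminus B_R)}$, and note that the contribution from $B_R$ is at most $C|x_0|^{-N-2s}\|u\|_{L^1(B_R)}$, which tends to $0$ as $|x_0|\to\infty$. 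Therefore $u(x)\to0$ as $|x|\to\infty$.
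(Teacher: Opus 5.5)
Your Step~1 has a genuine gap in the far-field estimate. The bound $u^p\le u^{m_H}+u^{m_B}$ requires $p$ to lie between $m_H$ and $m_B$. You have just shown that both exponents exceed $p$, so for $0<u<1$ the inequality reverses: $u^p>u^{m_H}$ and $u^p>u^{m_B}$. This is exactly the regime at infinity.

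In fact $\int_{\R^N\setminus B_1(x)}u^p\dd y$ may be infinite. A function behaving like $|y|^{-\alpha}$ at infinity, with $N/\min\{m_H,m_B\}<\alpha\le N/p$, lies in $L^{m_H}\cap L^{m_B}$ there but not in $L^p$. So you cannot discard the kernel on $\{|x-y|\ge1\}$; its decay is essential.

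The repair, which is what the paper does, is to apply H\"older there with $u^p\in L^{b}$ for $b=m_H/p$, and $|z|^{-\mu}\one_{\{|z|\ge1\}}\in L^{b'}$. This needs $\mu b'>N$, i.e.\ $p/m_H+\mu/N>1$, which is not automatic. It follows from the HLS identity $\frac{p+q-1}{m_H}+\frac{2}{2_s^*}+\frac{\mu}{N}=2$, which gives $\frac{p}{m_H}+\frac{\mu}{N}=1+\frac{2s}{N}-\frac{q-1}{m_H}$, together with $\frac{q-1}{m_H}<\frac{2s}{N}$; the latter is the last lower bound in \eqref{eq1.4}.

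Note that $m_B$ cannot serve for the far part. Indeed, $m_B>pN/(N-\mu)$ gives the reverse inequality $p/m_B+\mu/N<1$. Hence the bound $\|V\|_{L^\infty}\le C(\|u\|_{L^{m_B}}^p+\|u\|_{L^{m_H}}^p)$ genuinely needs both exponents, one for each region.

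Your Steps~2 and~4 use $f=Vu^q\in L^{r_B}(\R^N)$ globally. You need it for uniformity in $x_0$ and for $\|f\|_{L^{r_B}(B_{1/2}(x_0))}\to0$, so both steps depend on this repair. Once Step~1 is fixed, the rest is sound.

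Your second option in Step~2 is essentially the paper's argument. It takes the Riesz potential of the localized source, which is nonnegative, so $h\le u$, and bounds the $s$-harmonic remainder through a Poisson-kernel mean-value formula. The only difference is that you control the tail by $\|u\|_{L^{m_B}}$, whereas the paper uses the weighted $L^1$ integral and $L^{m_H}$. Either works.

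A De Giorgi-type local bound would also be acceptable. You would first have to pass from the distributional formulation to the energy formulation with nonnegative $H^s_0$ tests. Step~3 coincides with the paper.
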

	
	\begin{proof}
		Put \(a=m_B/p\) and \(b=m_H/p\). By \eqref{eq1.4}-\eqref{eq1.5}, \(a,b>1\), \(\mu a'<N\), and \(\mu b'>N\). For the last inequality, the HLS balance and \((q-1)/m_H<2s/N\) give
		\[
		\frac p{m_H}+\frac\mu N =1+\frac{2s}{N}-\frac{q-1}{m_H}>1.
		\]
		Splitting the Hartree integral into \(|x-y|<1\) and \(|x-y|\ge1\), and applying H\"older's inequality with exponents \(a\) and \(b\), respectively, yields
		\[
		\|V\|_{L^\infty(\R^N)} \le C\bigl(\|u\|_{L^{m_B}(\R^N)}^p +\|u\|_{L^{m_H}(\R^N)}^p\bigr).
		\]
		Consequently \(f:=Vu^q\in L^{r_B}(\R^N)\).
		
		Fix \(\delta>0\) and set \(\rho=\min\{1,\delta/4\}\). The local Riesz-potential decomposition used in Proposition \ref{Prop2.2}, together with the interior mean-value estimate for \(s\)-harmonic functions \cite[Section 2]{Fall2016}, gives, for \(\dist(x,\Gamma)\ge\delta\),
		\[
		\sup_{B_{\rho/2}(x)}u \le C_\delta\left( \|f\|_{L^{r_B}(B_{2\rho}(x))} +\int_{\R^N}\frac{u(y)}{1+|x-y|^{N+2s}}\dd y \right).
		\]
		Indeed, choose $\zeta_x\in C_c^\infty(B_{2\rho}(x))$, with $0\le\zeta_x\le1$ and equal to one on $B_\rho(x)$. Let $P_x$ be the Riesz potential of $\zeta_xf$, with the normalization used in Proposition~\ref{Prop2.2}, and put $h_x=u-P_x$. Then $P_x\ge0$,
		\[
		\|P_x\|_{L^\infty(\R^N)} \le C\rho^{2s-N/r_B}\|f\|_{L^{r_B}(B_{2\rho}(x))},
		\]
		and $h_x\in\Ls(\R^N)$ is $s$-harmonic in $B_\rho(x)$. Choose $\phi\in C_c^\infty((1,4))$ with $\phi\ge0$ and $\int\phi=1$, and define the averaged ball Poisson kernel
		\[
		\Phi(z)=\beta_{N,s}|z|^{-N} \int_1^{\min\{4,|z|\}} \frac{t^{2s}\phi(t)}{(|z|^2-t^2)^s}\,\dd t \quad (|z|>1), \qquad \Phi(z)=0\quad (|z|\le1),
		\]
		where $\beta_{N,s}>0$ is the normalizing constant in \cite{Fall2016}. For $\Phi_\ell(z)=\ell^{-N}\Phi(z/\ell)$ and $\ell=\rho/16$, the mean-value formula and kernel estimates in \cite{Fall2016} yield
		\[
		h_x=h_x*\Phi_\ell\quad\text{in }B_{\rho/2}(x), \qquad 0\le\Phi_\ell(z)\le \frac{C\ell^{2s}}{(\ell+|z|)^{N+2s}}.
		\]
		Hence, for $z\in B_{\rho/2}(x)$, positivity of $P_x$ gives
		\[
		u(z)\le \|P_x\|_\infty+ \int_{\R^N}u(y)\Phi_\ell(z-y)\,\dd y \le \|P_x\|_\infty+ C_\delta\int_{\R^N} \frac{u(y)}{1+|x-y|^{N+2s}}\,\dd y,
		\]
		which proves the estimate. Both terms on its right are uniformly bounded. They also tend to zero as \(|x|\to\infty\): this follows from \(f\in L^{r_B}(\R^N)\) for the first term, and from \(u\in L^{m_H}(\R^N)\) and H\"older's inequality for the second, by first splitting at a fixed large ball. This proves the boundedness and decay assertions. Finally, \(m_H<2_s^*\), so boundedness on \(\{\dist(\cdot,\Gamma)\ge\delta\}\) and the global \(L^{m_H}\)-bound give the asserted \(L^{2_s^*}\)-integrability.
	\end{proof}
	
	\begin{proposition}\label{Prop2.3}
		Let
		\[
		N=2,\qquad s=\frac12,\qquad \mu=1,\qquad p=q=2, \qquad \Gamma=\{0\}\times[-1,1].
		\]
		Then \(\Cap_{1/2,2}(\Gamma)=0\), and \eqref{eq1.1} admits a positive weak singular solution
		\[
		u\in L^3(\R^2)\cap L^5(\R^2)
		\]
		having a nonremovable singularity at every point of \(\Gamma\). In particular, the solution classes in Theorems~\ref{Thm1.1}-\ref{Thm1.3} are nonempty.
	\end{proposition}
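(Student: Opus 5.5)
The plan is to construct the solution as a small perturbation of the Riesz potential of the uniform measure on \(\Gamma\), using a contraction argument in \(X=L^3(\R^2)\cap L^5(\R^2)\). Here \(m_H=3\), \(2_s^*=4\), and \((-\Delta)^{1/2}\) in \(\R^2\) has fundamental solution \(c|x|^{-1}=I_1\). I would proceed in four steps.

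\emph{Step 1: \(\Cap_{1/2,2}(\Gamma)=0\).} I would use the Caffarelli--Silvestre extension characterization: \(H^{1/2}(\R^2)\) is the trace space of \(H^1(\R^3_+)\), and \(\|\varphi\|_{H^{1/2}}\) is controlled by the \(H^1\) norm of any extension. In \(\R^3\), the set \(\Gamma\times\{0\}\) is a segment, hence of codimension two. So the logarithmic cutoffs
\[
\Phi_k(X)=\min\Bigl\{1,\Bigl(\tfrac{\log(1/d(X))}{\log k}\Bigr)^+\Bigr\}\chi(X),\qquad d(X)=\dist(X,\Gamma\times\{0\}),
\]
smoothed and with \(\chi\) a fixed cutoff, have Dirichlet energy of order \(\int_{1/k}^{1}\frac{r\,\dd r}{r^2\log^2k}\sim 1/\log k\to0\) and vanishing \(L^2\) norm. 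Their traces, after mollification and a slight enlargement so that they are \(\ge1\) near \(\Gamma\), are admissible in Definition~\ref{Def1.1} and have \(H^{1/2}\) norm tending to zero.

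\emph{Step 2: the seed.} Let
\[
h(x)=\int_{-1}^1\frac{\dd t}{|x-(0,t)|}.
\]
Then \(h>0\), \(h\) is smooth and \(s\)-harmonic in \(\R^2\setminus\Gamma\), since it is \(c^{-1}I_1\) applied to a measure supported on \(\Gamma\). It satisfies \(h(x)\ge c\log(1/\dist(x,\Gamma))\) near every point of \(\Gamma\). Because \(h=O(|x|^{-1})\) at infinity and \(h\) has only a logarithmic singularity, \(h\in L^m(\R^2)\) for every \(m\in(2,\infty)\), and in particular \(h\in X\).

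\emph{Step 3: the fixed point.} Define
\[
T(u)=\eps h+\kappa I_1\bigl[(K_1*u^2)u^2\bigr]
\]
on nonnegative \(u\) in a small ball of \(X\). For \(u\in X\) we have \(u^2\in L^r\) for all \(r\in[3/2,5/2]\). HLS in \(\R^2\) with kernel \(|x|^{-1}\) maps \(L^r\) to \(L^t\) with \(1/t=1/r-1/2\) when \(r<2\).

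To land in \(L^3\): take \(r=3/2\), so \(t=6\), and pair with \(u^2\in L^{3/2}\). Then the source \((K_1*u^2)u^2\) lies in \(L^{6/5}\), and \(I_1:L^{6/5}\to L^3\).

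To land in \(L^5\): the source must lie in \(L^{10/7}\). Choose \(1/r=19/30\), giving \(1/t=4/30\), and pair with \(u^2\in L^{r_2}\), \(1/r_2=17/30\). Both \(r\) and \(r_2\) lie in \([3/2,5/2]\), and \(4/30+17/30=21/30=7/10\), so the source is in \(L^{10/7}\) and \(I_1:L^{10/7}\to L^5\).

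This gives \(\|T(u)\|_X\le\eps\|h\|_X+C\|u\|_X^4\). Multilinearity gives the corresponding Lipschitz bound \(C(\|u\|_X+\|v\|_X)^3\|u-v\|_X\). For \(\eps\) small, \(T\) is a contraction of \(\{u\ge0,\ \|u\|_X\le2\eps\|h\|_X\}\).

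\emph{Step 4: verification.} The fixed point satisfies \(u\ge\eps h>0\). Hence \(\operatorname*{ess\,sup}_{B_r(x_0)}u=+\infty\) for every \(x_0\in\Gamma\), so every point of \(\Gamma\) is a nonremovable singular point.
\begin{enumerate}[label=\textup{(\alph*)}]
\item \(u\in L^3\) implies \(u\in\Ls\).
\item \(\Hs[u]\) is finite a.e., and \(\Hs[u]u^2\in L^1_{\loc}\), by the HLS bounds of Step 3.
\item The weak identity holds for \(\varphi\in C_c^\infty(\R^2\setminus\Gamma)\). Indeed, \(\int h(-\Delta)^{1/2}\varphi=0\) because \(\varphi\) vanishes near the support of the measure, and \(I_1\) inverts \((-\Delta)^{1/2}\) on the source, which lies in \(L^{6/5}\).
\item The exponent conditions hold: \(m_H=3\), \(m_B=5>\max\{pN/(N-\mu),qN/(2s)\}=4\), and \eqref{eq1.4} is immediate. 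So Lemma~\ref{Lem2.5} and Proposition~\ref{Prop2.2} apply and give local H\"older regularity off \(\Gamma\).
\end{enumerate}

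The one delicate remaining point is \(u\in H^{1/2}_{\loc}(\R^2\setminus\Gamma)\). Away from \(\Gamma\), \(h\) is smooth. The potential term is \(I_1\) of a source that is locally bounded off \(\Gamma\) and globally in \(L^{6/5}\subset\dot H^{-1/2}\), which is the dual of \(\dot H^{1/2}\) with Sobolev exponent \(4\). Hence the potential term lies in \(\dot H^{1/2}(\R^2)\), and localization gives the claim.

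I expect the main obstacle to be the capacity step, since \(\Gamma\) has the critical dimension \(N-2s=1\): there is no power-type gain, and only the logarithmic extension argument works.
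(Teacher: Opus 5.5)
Your argument is correct, and your Steps 1--2 match the paper's. The paper's extension $\eta(x_2)h_\rho(\sqrt{x_1^2+t^2})$ is your logarithmic cutoff around the segment written in product form, and your $h$ is the paper's $W$ up to the constant $\kappa$.

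The existence step is genuinely different. The paper does not use a fixed-point theorem in a Lebesgue space. It proves the pointwise supersolution inequality \eqref{eq2.4}, $I_1[(T(W^2))W^2]\le C_0W$, from decay bounds for $T(W^2)$ and for $I_1F$ at infinity together with $W\ge c/(1+|x|)$. It then runs the monotone iteration $u_{n+1}=\eps W+I_1[(T(u_n^2))u_n^2]$, which stays trapped between $\eps W$ and $2\eps W$.

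Your contraction in $L^3\cap L^5$ needs only the HLS mapping properties of $K_1*$ and $I_1$; your exponent choices $6/5\mapsto 3$ and $10/7\mapsto 5$ check out. So you bypass the decay analysis entirely, and you also get uniqueness in a small ball. What you give up is the upper comparison. The paper's sandwich $\eps W\le u\le 2\eps W$ gives at once the exact logarithmic profile along $\Gamma$, the $O(|x|^{-1})$ decay, and $u\in L^a$ for every finite $a>2$. You get only $u\ge\eps h$ plus membership in $L^3\cap L^5$. You can recover the upper bound: splitting kernels at radius one, your bounds give $K_1*u^2\in L^\infty$, hence $f\in L^{3/2}\cap L^{5/2}$, hence $I_1f\in L^\infty$, so $u\le \eps h+C$.

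One step needs repair. The embedding $L^{6/5}(\R^2)\subset\dot H^{-1/2}(\R^2)$ is false, since the two norms scale differently; the dual of $\dot H^{1/2}(\R^2)\hookrightarrow L^4$ is $L^{4/3}$. The fix is immediate from your own estimates. Your source satisfies $f\in L^{6/5}\cap L^{10/7}$ and $6/5<4/3<10/7$, so interpolation gives $f\in L^{4/3}$. Then HLS gives $[I_1f]_{H^{1/2}}^2=c\iint f(x)f(y)|x-y|^{-1}\dd x\dd y\le C\|f\|_{L^{4/3}}^2$. So your conclusion $u\in H^{1/2}_{\loc}(\R^2\setminus\Gamma)$ stands. (The paper instead uses $f\in L^2$, the Riesz transforms, and boundedness of $I_1f$ to get $I_1f\in H^1_{\loc}$.)

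Drop the appeal to local boundedness of the source in that step. It is not needed, and obtaining it through Proposition~\ref{Prop2.2} is formally circular, because that proposition presupposes a weak singular solution, which includes the $H^{1/2}_{\loc}$ property you are proving. Finally, for Theorem~\ref{Thm1.3} you should still record that \eqref{eq1.9} holds: $p=2<3$ and $q=2\le 2$.
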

	
	\begin{proof}
		We first verify the capacity assertion. Choose \(\eta\in C_c^\infty(\R)\) equal to one on \([-2,2]\), and, for \(0<\rho<1/2\), put \(L_\rho=\log(1/\rho)\) and
		\[
		h_\rho(r)=
		\begin{cases}
			1,&0\le r\le\rho,\\
			\log(1/r)/L_\rho,&\rho<r<1,\\
			0,&r\ge1.
		\end{cases}
		\]
		The function \(U_\rho(x_1,x_2,t)=\eta(x_2)h_\rho(\sqrt{x_1^2+t^2})\), defined for \(t>0\), has trace equal to one near \(\Gamma\). Integration in polar coordinates in the \((x_1,t)\)-half-plane gives
		\[
		\int_{\R^3_+}\bigl(|\nabla U_\rho|^2+|U_\rho|^2\bigr) \le C\bigl(L_\rho^{-1}+L_\rho^{-2}+\rho^2\bigr)\longrightarrow0;
		\]
		indeed, the radial term gives \(\int_\rho^1|h_\rho'(r)|^2r\dd r=L_\rho^{-1}\), whereas \(\int_0^1h_\rho(r)^2r\dd r\le C{(L_\rho^{-2}+\rho^2)}\). The trace inequality gives convergence to zero in \(H^{1/2}(\R^2)\). Convolving each trace with a sufficiently small nonnegative smooth kernel preserves the value one on a smaller neighborhood of \(\Gamma\) and does not increase this norm. These are admissible functions in the definition of \(\Cap_{1/2,2}\), proving the~assertion.
		
		Write
		\[
		Th(x)=\int_{\R^2}\frac{h(y)}{|x-y|}\dd y, \qquad I_1h=\kappa Th,
		\]
		where \(\kappa>0\) is normalized so that \((-\Delta)^{1/2}I_1h=h\). If \(\sigma=\mathcal H^1\!\restriction_\Gamma\), its Riesz potential is
		\[
		W(x)=I_1\sigma(x) =\kappa\int_{-1}^1\frac{dt}{\sqrt{x_1^2+(x_2-t)^2}}.
		\]
		This function is positive and smooth outside \(\Gamma\). Directly from the integral, it satisfies
		\[
		W(x)\ge\frac{c}{1+|x|},\qquad W(x)\le\frac{C}{|x|}\quad(|x|\ge4),
		\]
		and, on \(B_4\setminus\Gamma\), \(W(x)\le C(1+\log_+(1/\dist(x,\Gamma)))\), where \(\log_+t=\max\{\log t,0\}\). The area of the \(r\)-neighborhood of \(\Gamma\) is \(O(r)\) as \(r\downarrow0\). Consequently \(W\in L^a(\R^2)\) for every finite \(a>2\), and \(\int_{B_R}W^2\le C\log(2+R)\).
		
		Set \(V_W=T(W^2)\) and \(F=V_WW^2\). Splitting the kernel at radius one, H\"older's inequality with \(W^2\in L^3\) in the near region and \(W^2\in L^{3/2}\) in the far region shows that \(V_W\in L^\infty(\R^2)\). For \(r=|x|\ge8\), split the integral into \(|y|\le r/2\), \(|y-x|\le r/2\), and the remaining region. The first contribution is at most \(C\log(2+r)/r\); the second is at most \(C/r\), using \(W(y)^2\le C/r^2\). Splitting the remaining region further at \(|y|=2r\) bounds both contributions by \(C/r\). It follows that
		\[
		V_W(x)\le C\frac{\log(2+|x|)}{|x|},\qquad F(x)\le C\frac{\log(2+|x|)}{|x|^3} \quad(|x|\ge8).
		\]
		Thus \(F\in L^1(\R^2)\cap L^a(\R^2)\) for every finite \(a\ge1\). In particular, \(I_1F\) is bounded, by splitting the kernel at radius one and using \(F\in L^3\cap L^1\). For \(r=|x|\ge16\), the same decomposition as above gives a contribution at most \(C\|F\|_1/r\) from \(|y|\le r/2\). The other three contributions are each at most \(C\log(2+r)/r^2\), by the displayed bound on \(F\). Combining this decay with the lower bound for \(W\), we obtain
		\begin{equation}\label{eq2.4}
			I_1\bigl[(T(W^2))W^2\bigr]\le C_0W \qquad\text{in }\R^2\setminus\Gamma.
		\end{equation}
		
		Choose \(\eps>0\) so that \(16C_0\eps^3\le1\), and define
		\[
		u_0=\eps W,\qquad u_{n+1}=\eps W+I_1\bigl[(T(u_n^2))u_n^2\bigr].
		\]
		Positivity of the kernels and \eqref{eq2.4} imply inductively that \(\eps W\le u_n\le u_{n+1}\le2\eps W\). The monotone convergence theorem, applied to both integrals, yields a finite limit outside \(\Gamma\) satisfying
		\[
		u=\eps W+I_1f,\qquad f=(T(u^2))u^2,\qquad \eps W\le u\le2\eps W.
		\]
		In particular, \(T(u^2)\le4\eps^2V_W\) and \(0\le f\le16\eps^4F\), so the potential is bounded and \(f\in L^1\cap L^a\) for every finite \(a\ge1\). The fundamental-solution identity gives \((-\Delta)^{1/2}u=\eps\sigma+f\) in distributions on \(\R^2\), hence \eqref{eq1.1} holds for all tests supported outside \(\Gamma\). Since \(f\in L^2\), the Riesz transform multipliers \(i\xi_j/|\xi|\) give \(\nabla I_1f\in L^2\). Also \(I_1f\) is bounded, and therefore belongs to \(H^1_{\loc}(\R^2)\). Together with the smoothness of \(W\) outside \(\Gamma\), this proves \(u\in H^{1/2}_{\loc}(\R^2\setminus\Gamma)\). The bounds by \(W\) also give \(u\in\Ls(\R^2)\cap L^3(\R^2)\cap L^5(\R^2)\). For every $z\in\Gamma$, a subsegment of length one starting at $z$ is contained in $\Gamma$. The triangle inequality therefore gives
		\[
		W(x)\ge\kappa\int_0^1\frac{dt}{|x-z|+t} =\kappa\log\left(1+\frac1{|x-z|}\right).
		\]
		Since $u\ge\eps W$, every point of $\Gamma$ is nonremovable in the sense of the stated definition.
		
		For these parameters, \(2_s^*=4\), \(m_H=3\), and the maximum in \eqref{eq1.4} equals \(8/3\). Moreover, \(m_B=5>4\) satisfies \eqref{eq1.5}, while \(p+q+1=5<6=2\,2_{\mu,s}^*\). The bounds in \eqref{eq1.9} are \(p=2<3\) and \(q=2\le2\). All assumptions of Theorems~\ref{Thm1.1}-\ref{Thm1.3} are therefore satisfied.
	\end{proof}
	
	\section{Test Functions near the Reflected Singular Set}\label{Sec3}
	
	Unless otherwise stated, throughout Sections~\ref{Sec3}-\ref{Sec5} we assume \ref{Ass1.1} and \ref{Ass1.2} and use the continuous representative fixed in the introduction. We retain the reflection notation introduced there and take $\lambda<0$. The half-space \(\Sigma_\lambda\) has positive distance from \(\Gamma\), whereas \(\Gamma_\lambda\subset\Sigma_\lambda\). We first use local regularity to justify compactly supported test functions in the reflected equation. The right moving plane uses the same arguments in \(\{x_1>\lambda\}\), with \(\lambda>0\).
	
	For compactly supported tests away from the singular sets, write
	\[
	\E(h,\psi)=\frac{C_{N,s}}2 \iint_{\R^N\times\R^N} \frac{(h(x)-h(y))(\psi(x)-\psi(y))}{|x-y|^{N+2s}}\dd x\dd y,
	\]
	where \(C_{N,s}\) is the normalization constant for \((-\Delta)^s\). For a bounded smooth open set \(\Omega\), we use
	\[
	H_0^s(\Omega)=\{\psi\in H^s(\R^N):\psi=0\text{ a.e. in }\R^N\setminus\Omega\}.
	\]
	
	\begin{lemma}\label{Lem3.1}
		Let \(u\) be a positive weak singular solution of \eqref{eq1.1}, and let \(\lambda<0\). Then \(u_\lambda\) is a positive weak singular solution of
		\[
		(-\Delta)^s u_\lambda=V_\lambda u_\lambda^q \quad\text{in }\R^N\setminus\Gamma_\lambda
		\]
		in the weak sense, where
		\[
		V_\lambda(x)= \int_{\R^N\setminus\Gamma_\lambda} \frac{u_\lambda^p(y)}{|x-y|^\mu}\dd y = \Hs[u](x^\lambda) \quad\text{for a.e. }x\in\R^N\setminus\Gamma_\lambda.
		\]
	\end{lemma}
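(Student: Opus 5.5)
The proof is a change of variables under the reflection $R_\lambda$. The plan is to check, one at a time, that each requirement of Definition~\ref{Def1.2} transfers from $u$ to $u_\lambda$ with $\Gamma$ replaced by $\Gamma_\lambda$. Three facts do all the work: $R_\lambda$ is an affine isometry, it preserves Lebesgue measure, and $R_\lambda^2=\mathrm{Id}$.

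\textbf{Step 1: the potential identity.} For a.e.\ $x\in\R^N\setminus\Gamma_\lambda$, substitute $y=z^\lambda$ in the integral defining $V_\lambda(x)$. Since $|x-z^\lambda|=|x^\lambda-z|$ and $R_\lambda(\R^N\setminus\Gamma_\lambda)=\R^N\setminus\Gamma$, this gives
\[
\int_{\R^N\setminus\Gamma_\lambda}\frac{u_\lambda^p(y)}{|x-y|^\mu}\dd y=\int_{\R^N\setminus\Gamma}\frac{u^p(z)}{|x^\lambda-z|^\mu}\dd z=\Hs[u](x^\lambda).
\]
The point $x^\lambda$ then ranges over a.e.\ point of $\R^N\setminus\Gamma$, so the finiteness of $\Hs[u]$ a.e.\ transfers to finiteness of $V_\lambda$ a.e.

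\textbf{Step 2: the structural conditions.} Positivity a.e.\ is immediate, because $R_\lambda$ maps null sets to null sets. For $u_\lambda\in\Ls(\R^N)$, change variables and use
\[
1+|x^\lambda|^{N+2s}\le C_\lambda\bigl(1+|x|^{N+2s}\bigr),
\]
which holds since $|x^\lambda|\le|x|+2|\lambda|$. For the local Sobolev condition, if $U\Subset\R^N\setminus\Gamma_\lambda$ then $R_\lambda U\Subset\R^N\setminus\Gamma$; the Gagliardo double integral is invariant under the isometry, so $u_\lambda\in H^s_{\loc}(\R^N\setminus\Gamma_\lambda)$. In the same way, local integrability of $V_\lambda u_\lambda^q=(\Hs[u]u^q)\circ R_\lambda$ on $\R^N\setminus\Gamma_\lambda$ follows from the corresponding property of $\Hs[u]u^q$.

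\textbf{Step 3: the weak identity.} Fix $\varphi\in C_c^\infty(\R^N\setminus\Gamma_\lambda)$. Then $\varphi_\lambda=\varphi\circ R_\lambda\in C_c^\infty(\R^N\setminus\Gamma)$. I need the commutation identity
\[
(-\Delta)^s(\varphi\circ R_\lambda)=\bigl((-\Delta)^s\varphi\bigr)\circ R_\lambda .
\]
This follows from the singular-integral formula by substituting $y\mapsto y^\lambda$ and using $|x^\lambda-y^\lambda|=|x-y|$; alternatively it follows from rotation and translation invariance of the Fourier multiplier $|\xi|^{2s}$. Using this identity and the change of variables $x\mapsto x^\lambda$,
\[
\int u_\lambda(-\Delta)^s\varphi\dd x=\int u\,(-\Delta)^s\varphi_\lambda\dd x=\int_{\R^N\setminus\Gamma}\Hs[u]u^q\varphi_\lambda\dd x=\int_{\R^N\setminus\Gamma_\lambda}V_\lambda u_\lambda^q\varphi\dd x .
\]
The first integral converges absolutely because $u\in\Ls(\R^N)$ and $|(-\Delta)^s\varphi_\lambda(x)|\le C(1+|x|)^{-N-2s}$.

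\textbf{Main obstacle.} There is no real obstacle here. The only point requiring care is the commutation of $(-\Delta)^s$ with the reflection, together with the bookkeeping of which exceptional sets are mapped to which. Note also that the hypothesis $\lambda<0$ plays no role in this lemma.
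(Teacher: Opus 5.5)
Your proposal is correct and takes essentially the same route as the paper. Both proofs use the substitution $y=z^\lambda$ with $|x-z^\lambda|=|x^\lambda-z|$ for the potential, inherit positivity, the $\Ls$ condition (via $1+|x^\lambda|^{N+2s}\asymp_\lambda 1+|x|^{N+2s}$), local $H^s$ regularity and local integrability of the source, and apply the commutation $(-\Delta)^s(\varphi\circ R_\lambda)=((-\Delta)^s\varphi)\circ R_\lambda$ in the weak formulation; your remark that $\lambda<0$ is never used is also accurate.
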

	
	\begin{proof}
		The affine reflection preserves Lebesgue measure and maps \(\R^N\setminus\Gamma\) bijectively onto \(\R^N\setminus\Gamma_\lambda\). Hence, for a.e. \(x\in\R^N\setminus\Gamma_\lambda\), the change of variables \(y=z^\lambda\) gives
		\[
		V_\lambda(x) = \int_{\R^N\setminus\Gamma_\lambda} \frac{u_\lambda^p(y)}{|x-y|^\mu}\dd y = \int_{\R^N\setminus\Gamma} \frac{u^p(z)}{|x-z^\lambda|^\mu}\dd z.
		\]
		Since reflection is an isometry,
		\[
		|x-z^\lambda|=|x^\lambda-z|,
		\]
		and therefore
		\[
		V_\lambda(x)=\int_{\R^N\setminus\Gamma}\frac{u^p(z)}{|x^\lambda-z|^\mu}\dd z =\Hs[u](x^\lambda).
		\]
		
		We also verify the requirements in Definition~\ref{Def1.2}. Since \(1+|x^\lambda|^{N+2s}\asymp_\lambda 1+|x|^{N+2s}\), we have \(u_\lambda\in\Ls(\R^N)\). Positivity a.e. and local \(H^s\)-regularity away from \(\Gamma_\lambda\) are inherited from \(u\). Moreover,
		\[
		V_\lambda u_\lambda^q=(\Hs[u]u^q)\circ R_\lambda \quad\text{a.e. in }\R^N\setminus\Gamma_\lambda,
		\]
		so the reflected potential is finite a.e. and the reflected source belongs to \(L^1_{\loc}(\R^N\setminus\Gamma_\lambda)\). More generally, since reflection preserves Lebesgue measure,
		\[
		\|u_\lambda\|_{L^m(\R^N)}=\|u\|_{L^m(\R^N)} \qquad\text{for every }1\le m\le+\infty
		\]
		whenever either side is finite. In particular, condition \ref{Ass1.2} and its \(L^{m_H}\)- and \(L^{m_B}\)-bounds are~preserved.
		
		We next verify the weak equation. Let \(\varphi\in C_c^\infty(\R^N\setminus\Gamma_\lambda)\), and define
		\[
		\varphi^\lambda(x):=\varphi(x^\lambda).
		\]
		Then \(\varphi^\lambda\in C_c^\infty(\R^N\setminus\Gamma)\). Since reflection is an isometry, for every \(x\in\R^N\),
		\[
		(-\Delta)^s\varphi^\lambda(x) = C_{N,s}\,\mathrm{P.V.}\int_{\R^N} \frac{\varphi(x^\lambda)-\varphi(y^\lambda)}{|x-y|^{N+2s}}\dd y.
		\]
		Making the change of variables \(z=y^\lambda\), we obtain
		\[
		(-\Delta)^s\varphi^\lambda(x) = C_{N,s}\,\mathrm{P.V.}\int_{\R^N} \frac{\varphi(x^\lambda)-\varphi(z)}{|x^\lambda-z|^{N+2s}}\dd z = ((-\Delta)^s\varphi)(x^\lambda).
		\]
		Therefore, using the weak formulation of \(u\) and then the change of variables \(x=y^\lambda\),
		\begin{align*}
			\int_{\R^N}u_\lambda(x)(-\Delta)^s\varphi(x)\dd x &= \int_{\R^N}u(y)\,((-\Delta)^s\varphi)(y^\lambda)\dd y \\
			&= \int_{\R^N}u(y)(-\Delta)^s\varphi^\lambda(y)\dd y \\
			&= \int_{\R^N\setminus\Gamma}\Hs[u](y)u^q(y)\varphi^\lambda(y)\dd y \\
			&= \int_{\R^N\setminus\Gamma_\lambda} \Hs[u](x^\lambda)u_\lambda^q(x)\varphi(x)\dd x \\
			&= \int_{\R^N\setminus\Gamma_\lambda} V_\lambda(x)u_\lambda^q(x)\varphi(x)\dd x.
		\end{align*}
		This proves the lemma.
	\end{proof}
	
	\begin{lemma}\label{Lem3.2}
		Let \(\Gamma\subset\{x_1=0\}\) be compact, let \(\lambda<0\), and let \(u\in\Ls(\R^N)\cap H^s_{\loc}(\R^N\setminus\Gamma)\). If \(\xi\in C_c^\infty(\R^N)\), \(0\le\xi\le1\), vanishes near \(\Gamma_\lambda\), then
		\[
		W_\lambda\xi\in H^s(\R^N), \qquad W_\lambda\xi^2\in H_0^s(\Omega)
		\]
		for some bounded smooth open set \(\Omega\Subset\R^N\setminus(\Gamma\cup\Gamma_\lambda)\). Both pairings \(\E(u,W_\lambda\xi^2)\) and \(\E(u_\lambda,W_\lambda\xi^2)\) are absolutely convergent. If, in addition, \((-\Delta)^su=f\) weakly away from \(\Gamma\), with \(f\in L^\infty_{\loc}(\R^N\setminus\Gamma)\), and \(f_\lambda=f\circ R_\lambda\), then
		\begin{equation}\label{eq3.1}
			\E(w_\lambda,W_\lambda\xi^2) =\int_{\R^N}(f-f_\lambda)W_\lambda\xi^2\dd x.
		\end{equation}
		In particular, this applies with \(f=Vu^q\) under condition \ref{Ass1.2}, and also to equations with locally bounded weighted sources.
	\end{lemma}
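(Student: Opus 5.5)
The plan has three steps: localize the support, establish $H^s$-regularity of the truncated function, and then test the two weak equations and subtract.

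\emph{Step 1: localization.} Since $\xi\in C_c^\infty(\R^N)$ vanishes near $\Gamma_\lambda$ and $W_\lambda$ is supported in $\overline{\Sigma_\lambda}$, the function $W_\lambda\xi$ is supported in $K:=\supp\xi\cap\overline{\Sigma_\lambda}$. This set is compact and has positive distance from $\Gamma_\lambda$, because $\xi$ vanishes near $\Gamma_\lambda$. It also has positive distance from $\Gamma$: for $\lambda<0$ the closed half-space $\overline{\Sigma_\lambda}$ lies at distance $|\lambda|$ from $\{x_1=0\}\supset\Gamma$. Hence we may choose a bounded smooth open set $\Omega$ with $K\subset\Omega\Subset\R^N\setminus(\Gamma\cup\Gamma_\lambda)$.

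\emph{Step 2: $H^s$-regularity.} Since $u\in H^s_{\loc}(\R^N\setminus\Gamma)$, we have $u\in H^s(\Omega')$ for a slightly larger $\Omega'\Supset\Omega$, still compactly contained in $\R^N\setminus(\Gamma\cup\Gamma_\lambda)$. Reflection gives $u_\lambda\in H^s(\Omega')$ as well, because $R_\lambda\Omega'$ avoids $\Gamma$. Therefore $w_\lambda\in H^s(\Omega')$, and so is $w_\lambda^+$, since truncation is $1$-Lipschitz. Multiplying by the smooth cutoff $\xi\eta$, with $\eta\in C_c^\infty(\Omega)$ equal to $1$ on $K$, gives a function in $H^s(\R^N)$. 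The remaining factor $\one_{\Sigma_\lambda}$ is harmless: $w_\lambda=0$ on $T_\lambda$ (at least in the trace sense, and pointwise for the continuous representative), so the cut of $w_\lambda^+$ along $T_\lambda$ is continuous across the plane. To make this rigorous, I would use the standard fact that $v\one_{\{x_1<\lambda\}}\in H^s$ whenever $v$ is odd-like with respect to $T_\lambda$. Concretely, since $w_\lambda$ is antisymmetric under $R_\lambda$, we have $w_\lambda^+\one_{\Sigma_\lambda}(x)=\max(w_\lambda,0)$ for $x\in\Sigma_\lambda$, and $|W_\lambda(x)-W_\lambda(y)|\le|w_\lambda^+(x)-w_\lambda^+(y)|$ except when $x$ and $y$ lie on opposite sides of $T_\lambda$. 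In that case one bounds $W_\lambda(x)\le|w_\lambda(x)-w_\lambda(y^\lambda)|$ when $w_\lambda(y^\lambda)\le0$, and handles the other sign symmetrically; combined with $|x-y^\lambda|\le|x-y|$, this keeps the Gagliardo integral controlled by that of $w_\lambda\xi$ on $\Omega'$ plus lower-order terms. The same argument applies to $W_\lambda\xi^2$, which therefore lies in $H_0^s(\Omega)$. I expect this jump across $T_\lambda$ to be the main technical point; the fallback is to invoke the pointwise inequality $|W(x)-W(y)|\le|w(x)-w(y)|+|w(x)-w(y^\lambda)|$ and integrate.

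\emph{Step 3: convergence and the identity.} For absolute convergence of $\E(u,\psi)$ with $\psi=W_\lambda\xi^2\in H_0^s(\Omega)$, I would split $\R^N\times\R^N$ into $\Omega'\times\Omega'$ and its complement. On $\Omega'\times\Omega'$, Cauchy–Schwarz with $u,\psi\in H^s(\Omega')$ suffices. On the complement, since $\psi$ vanishes outside $\Omega\Subset\Omega'$, the integrand is bounded by $|u(x)-u(y)||\psi(x)|\,|x-y|^{-N-2s}$ with $x\in\Omega$ and $|x-y|\ge d>0$. This is controlled by $\int|\psi|(|u|+\int|u(y)|(1+|y|)^{-N-2s}dy)$, which is finite because $u\in\Ls$, $u\in L^2(\Omega)$, and $\psi\in L^2$. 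The same works for $u_\lambda$, which lies in $\Ls$ by Lemma~\ref{Lem3.1}. For \eqref{eq3.1}, take $\varphi_j\in C_c^\infty(\Omega)$ with $\varphi_j\to\psi$ in $H^s$, obtained by mollification. For smooth compactly supported $\varphi$ away from the singular set, $\int u(-\Delta)^s\varphi=\E(u,\varphi)$ by Fubini, which is justified by the same splitting. The weak equations of $u$ and of $u_\lambda$ (Lemma~\ref{Lem3.1}) then give $\E(w_\lambda,\varphi_j)=\int(f-f_\lambda)\varphi_j$. Passing to the limit uses the two estimates just derived, which are continuous in $\|\varphi\|_{H^s}$, on the left, and $f,f_\lambda\in L^\infty(\Omega)$ with $\varphi_j\to\psi$ in $L^1(\Omega)$ on the right. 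Finally, under \ref{Ass1.2}, Lemma~\ref{Lem2.5} together with the continuity from Proposition~\ref{Prop2.2} shows that $f=Vu^q\in L^\infty_{\loc}(\R^N\setminus\Gamma)$.
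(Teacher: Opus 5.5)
Your proposal is correct and follows essentially the paper's proof: localize away from $\Gamma\cup\Gamma_\lambda$, control the cross-plane interactions of $W_\lambda$ by the two-case sign argument coming from antisymmetry, bound the pairings by the local/tail splitting, and pass to the limit from $C_c^\infty(\Omega)$ approximants. The only difference is packaging: the paper applies the sign argument to the global antisymmetric function $g=\chi w_\lambda$, with an $R_\lambda$-invariant cutoff $\chi$, and gets $[G]_{H^s}^2\le 3[g]_{H^s}^2$, whereas you work locally on $\Omega'$ and treat far interactions as lower-order terms; your trace remark is unnecessary, and for a general source $f$ the reflected equation follows from the change of variables in Lemma~\ref{Lem3.1} rather than from its statement.
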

	
	\begin{proof}
		Put \(K=\supp\xi\cap\overline{\Sigma_\lambda}\). Then \(K\cup R_\lambda K\) is compact and disjoint from \(\Gamma\cup\Gamma_\lambda\). Choose a smooth compactly supported function \(\chi\) in the complement of these sets, invariant under \(R_\lambda\), and equal to one near \(K\cup R_\lambda K\). The function \(g=\chi w_\lambda\) belongs to \(H^s(\R^N)\) and is antisymmetric. We claim that \(G=g^+\one_{\Sigma_\lambda}\in H^s(\R^N)\). The contribution over \(\Sigma_\lambda\times\Sigma_\lambda\) is controlled by the seminorm of \(g\), and \(\|G\|_2\le\|g\|_2\). For the cross interaction write the exterior point as \(z^\lambda\), with \(z\in\Sigma_\lambda\). Whenever \(G(x)>0\),
		\[
		G(x)^2\le
		\begin{cases}
			|g(x)-g(z^\lambda)|^2,&g(z)\ge0,\\
			|g(x)-g(z)|^2,&g(z)<0.
		\end{cases}
		\]
		In the second case \(|x-z|<|x-z^\lambda|\). Integrating the two cases and adding the contribution over $\Sigma_\lambda\times\Sigma_\lambda$ gives $[G]_{H^s}^2\le3[g]_{H^s}^2$, which proves the claim. Since \(G=W_\lambda\) on \(K\), multiplication by \(\xi\) and by \(\xi^2\) shows that both products belong to \(H^s(\R^N)\). Their supports are compact and disjoint from both singular sets. Choose \(\Omega\) with both supports compactly contained in it, and choose \(\overline\Omega\subset\Omega'\Subset \R^N\setminus(\Gamma\cup\Gamma_\lambda)\).
		
		For any \(h\in\Ls(\R^N)\cap H^s(\Omega')\) and \(\psi\in H^s(\R^N)\) supported in \(\overline\Omega\), the integral of
		\[
		\frac{|h(x)-h(y)|\,|\psi(x)-\psi(y)|}{|x-y|^{N+2s}}
		\]
		over \(\Omega'\times\Omega'\) is bounded by \([h]_{H^s(\Omega')}[\psi]_{H^s}\). Since $\overline\Omega$ has positive distance from $\R^N\setminus\Omega'$, the remaining interactions are bounded by
		\[
		C\|h\|_{L^2(\Omega)}\|\psi\|_{L^2(\Omega)} +C\|\psi\|_{L^1(\Omega)} \int_{\R^N}\frac{|h(y)|}{1+|y|^{N+2s}}\dd y.
		\]
		Thus the pairing is absolutely convergent and continuous in \(\psi\) for the \(H^s\)-norm. Reflection preserves the weighted tail class because \(1+|x^\lambda|^{N+2s}\asymp_\lambda 1+|x|^{N+2s}\), so this applies to both \(u\) and \(u_\lambda\). For smooth tests, splitting into the same local and tail regions gives \(\E(u,\psi)=\int u(-\Delta)^s\psi\). The reflected weak equation has source \(f_\lambda\), by the change of variables used in Lemma~\ref{Lem3.1}. Approximate \(W_\lambda\xi^2\) in \(H^s(\R^N)\) by functions in \(C_c^\infty(\Omega)\). The pairings converge by the preceding estimates, and the source integrals converge in view of local boundedness and \(L^1(\Omega)\)-convergence. Subtracting the weak equations proves \eqref{eq3.1}.
	\end{proof}
	
	\begin{lemma}\label{Lem3.3}
		Let \(w\) be a measurable function that is finite almost everywhere and antisymmetric with respect to \(T_\lambda\), and put \(W=w^+\one_{\Sigma_\lambda}\). Let \(0\le\xi\le1\) be measurable. Assume that \(W\xi\in H^s(\R^N)\), that the pairing \(\E(w,W\xi^2)\) is absolutely convergent, and that
		\[
		\mathcal R_\xi= \iint_{\Sigma_\lambda\times\Sigma_\lambda} \frac{W(x)W(y)|\xi(x)-\xi(y)|^2}{|x-y|^{N+2s}}\dd x\dd y<+\infty.
		\]
		Then
		\begin{equation}\label{eq3.2}
			\E(w,W\xi^2)\ge \frac{C_{N,s}}2\bigl([W\xi]_{H^s(\R^N)}^2-\mathcal R_\xi\bigr).
		\end{equation}
	\end{lemma}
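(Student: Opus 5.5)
We prove \eqref{eq3.2} by splitting the double integral defining \(\E(w,W\xi^2)\) according to the position of \(x,y\) relative to \(T_\lambda\), and then folding the exterior half-space onto \(\Sigma_\lambda\) by the reflection. Write \(\psi=W\xi^2\), which vanishes off \(\Sigma_\lambda\). Absolute convergence lets us decompose
\[
\E(w,\psi)=\frac{C_{N,s}}2\Bigl(\iint_{\Sigma_\lambda\times\Sigma_\lambda}+2\iint_{\Sigma_\lambda\times\Sigma_\lambda^c}\Bigr)\frac{(w(x)-w(y))(\psi(x)-\psi(y))}{|x-y|^{N+2s}}\dd x\dd y,
\]
since the \(\Sigma_\lambda^c\times\Sigma_\lambda^c\) block vanishes. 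In the cross term we substitute \(y=z^\lambda\) with \(z\in\Sigma_\lambda\). Antisymmetry gives \(w(z^\lambda)=-w(z)\), and \(|x-z^\lambda|\ge|x-z|\) for \(x,z\in\Sigma_\lambda\).

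For the \(\Sigma_\lambda\times\Sigma_\lambda\) block we use the pointwise inequality
\[
(w(x)-w(y))(W(x)\xi^2(x)-W(y)\xi^2(y))\ge (W(x)-W(y))(W(x)\xi^2(x)-W(y)\xi^2(y)),
\]
which follows by writing \(w=W-w^-\) on \(\Sigma_\lambda\) and noting that \(-(w^-(x)-w^-(y))(\psi(x)-\psi(y))=w^-(x)\psi(y)+w^-(y)\psi(x)\ge0\). Then we invoke the standard algebraic identity
\[
(a-b)(a\xi_1^2-b\xi_2^2)=(a\xi_1-b\xi_2)^2-ab(\xi_1-\xi_2)^2,\qquad a,b\ge0,
\]
applied with \(a=W(x)\), \(b=W(y)\), \(\xi_1=\xi(x)\), \(\xi_2=\xi(y)\). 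Integrating yields
\[
\iint_{\Sigma_\lambda\times\Sigma_\lambda}\frac{|W\xi(x)-W\xi(y)|^2}{|x-y|^{N+2s}}-\mathcal R_\xi.
\]
Both integrals are finite because \(W\xi\in H^s(\R^N)\) and \(\mathcal R_\xi<\infty\), so the subtraction is legitimate.

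For the cross term, with \(x,z\in\Sigma_\lambda\), the integrand is
\[
(w(x)+w(z))\,\psi(x)\,|x-z^\lambda|^{-N-2s}.
\]
We must show that twice this cross term dominates the part of \([W\xi]_{H^s}^2\) coming from \(\Sigma_\lambda\times\Sigma_\lambda^c\) and \(\Sigma_\lambda^c\times\Sigma_\lambda\). That part equals
\[
2\iint_{\Sigma_\lambda\times\Sigma_\lambda}(W\xi)^2(x)\,|x-z^\lambda|^{-N-2s}\dd x\dd z,
\]
since \(W\xi=0\) on \(\Sigma_\lambda^c\). Where \(\psi(x)>0\) we have \(w(x)=W(x)>0\), and we split according to the sign of \(w(z)\).

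If \(w(z)\ge0\), then \((w(x)+w(z))\psi(x)\ge W(x)^2\xi^2(x)\), which is exactly the needed term.

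If \(w(z)<0\), the product \((w(x)+w(z))\psi(x)\) may be negative. Here we compensate with the \(\Sigma_\lambda\times\Sigma_\lambda\) block, where we previously discarded the nonnegative term \(w^-(z)\psi(x)|x-z|^{-N-2s}\). In the cross term the negative contribution is \(-w^-(z)\psi(x)|x-z^\lambda|^{-N-2s}\), which is bounded in absolute value by \(w^-(z)\psi(x)|x-z|^{-N-2s}\) because \(|x-z^\lambda|\ge|x-z|\). Combining the two pieces leaves a remainder
\[
w^-(z)\psi(x)\bigl(|x-z|^{-N-2s}-|x-z^\lambda|^{-N-2s}\bigr)\ge0,
\]
together with \(W(x)^2\xi(x)^2|x-z^\lambda|^{-N-2s}\), which again is the needed term.

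Summing all contributions gives \(\E(w,\psi)\ge\frac{C_{N,s}}2\bigl([W\xi]^2_{H^s}-\mathcal R_\xi\bigr)\), which is \eqref{eq3.2}. The main obstacle is the bookkeeping in the regrouping. Each rearrangement, in particular splitting off \(\mathcal R_\xi\) and moving the \(w^-\) terms from the \(\Sigma_\lambda\times\Sigma_\lambda\) block into the cross term, must act only on absolutely convergent integrals. This holds because each regrouped piece is either nonnegative or dominated by the absolutely convergent pairing \(\E(w,\psi)\), by \([W\xi]_{H^s}^2\), or by \(\mathcal R_\xi\).
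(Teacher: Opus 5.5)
Your proposal is correct and follows the paper's proof essentially step by step: you split the pairing into the $\Sigma_\lambda\times\Sigma_\lambda$ block and the reflected cross block, write $w=W-w^-$, apply the identity $(a-b)(a\xi_1^2-b\xi_2^2)=(a\xi_1-b\xi_2)^2-ab(\xi_1-\xi_2)^2$, and absorb the negative $w^-$ part of the cross term into the mixed terms of the first block via $|x-z^\lambda|\ge|x-z|$. The only difference is the integrability bookkeeping: you dominate each regrouped piece by the absolutely convergent pairing, by $[W\xi]_{H^s}^2$, or by $\mathcal R_\xi$, whereas the paper integrates on truncated sets $D_n$ and passes to the limit, and both justifications are adequate.
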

	
	\begin{proof}
		Write \(\Sigma=\Sigma_\lambda\), \(U=w^-\one_\Sigma\), \(B=W\xi^2\), and \(Z=W\xi\). Splitting the pairing into the two half-spaces, reflecting the exterior variable, and symmetrizing gives
		\begin{align*}
			\frac{2}{C_{N,s}}\E(w,B) ={}&\iint_{\Sigma\times\Sigma} \frac{(w(x)-w(y))(B(x)-B(y))}{|x-y|^{N+2s}}\dd x\dd y\\
			&+\iint_{\Sigma\times\Sigma} \frac{(w(x)+w(y))(B(x)+B(y))}{|x-y^\lambda|^{N+2s}}\dd x\dd y.
		\end{align*}
		Both displayed integrands are absolutely integrable. We justify the following expansions by first integrating on the increasing sets
		\[
		\begin{split}
			D_n=\bigl\{(x,y)\in(\Sigma\cap B_n)^2:\;& \min\{\dist(x,T_\lambda),\dist(y,T_\lambda)\}>1/n,\\
			&|x-y|>1/n,\quad |w(x)|+|w(y)|\le n\bigr\}.
		\end{split}
		\]
		All terms there are integrable. Since \(w=W-U\) on \(\Sigma\) and \(WU=0\), the mixed terms in the two kernels combine into
		\[
		\bigl(U(x)W(y)\xi(y)^2+U(y)W(x)\xi(x)^2\bigr) \left(\frac1{|x-y|^{N+2s}}- \frac1{|x-y^\lambda|^{N+2s}}\right)\ge0.
		\]
		For the terms containing only \(W\) in the first double integral, use
		\begin{align*}
			&(W(x)-W(y))(W(x)\xi(x)^2-W(y)\xi(y)^2)\\
			&\hspace{1cm}=(Z(x)-Z(y))^2-W(x)W(y)(\xi(x)-\xi(y))^2.
		\end{align*}
		The corresponding reflected contribution is bounded below by \((Z(x)^2+Z(y)^2)|x-y^\lambda|^{-N-2s}\), because its remaining terms are nonnegative. It follows that
		\begin{align*}
			\frac{2}{C_{N,s}}\E(w,B) \ge{}&\iint_{\Sigma\times\Sigma} \frac{(Z(x)-Z(y))^2}{|x-y|^{N+2s}}\dd x\dd y +\iint_{\Sigma\times\Sigma} \frac{Z(x)^2+Z(y)^2}{|x-y^\lambda|^{N+2s}}\dd x\dd y -\mathcal R_\xi.
		\end{align*}
		Indeed, the original pairing integrands converge in \(L^1\) along \(D_n\), the commutator is integrable by hypothesis, and monotone convergence applies to the retained nonnegative energy terms. Since \(Z=0\) outside \(\Sigma\), changing variables \(z=y^\lambda\) identifies the two nonnegative integrals with \([Z]_{H^s(\R^N)}^2\). This proves \eqref{eq3.2}.
	\end{proof}
	
	Fix \(\phi\in C_c^\infty(B_2)\), with \(0\le\phi\le1\) and \(\phi=1\) on \(B_1\), and set \(\phi_R(x)=\phi(x/R)\), \(R>1\). The next lemma specifies the cutoffs used in removing the reflected singular set and the far field.
	
	\begin{lemma}\label{Lem3.4}
		Assume condition \ref{Ass1.1}, fix \(\lambda<0\), and let \(W\ge0\) belong to \(L^{2_s^*}(\R^N)\), vanish outside \(\Sigma_\lambda\), and be bounded on a neighborhood of \(\Gamma_\lambda\). There are decreasing bounded open neighborhoods \(U_j\) of \(\Gamma_\lambda\), with
		\[
		\overline{U_{j+1}}\subset U_j, \qquad \overline{U_j}\Subset\Sigma_\lambda, \qquad \bigcap_j\overline{U_j}=\Gamma_\lambda,
		\]
		and smooth functions \(\eta_{j,\lambda}\) satisfying
		\[
		0\le\eta_{j,\lambda}\le1, \qquad \eta_{j,\lambda}=0\text{ near }\Gamma_\lambda, \qquad \eta_{j,\lambda}=1\text{ outside }U_j, \qquad \|1-\eta_{j,\lambda}\|_{H^s}<1/j,
		\]
		such that, with \(\xi_{j,R}=\eta_{j,\lambda}\phi_R\),
		\begin{equation}\label{eq3.3}
			\lim_{R\to\infty}\limsup_{j\to\infty} \iint_{\Sigma_\lambda\times\Sigma_\lambda} \frac{W(x)W(y)|\xi_{j,R}(x)-\xi_{j,R}(y)|^2} {|x-y|^{N+2s}}\dd x\dd y=0.
		\end{equation}
		Moreover, \(\eta_{j,\lambda}\to1\) almost everywhere, and every \(K\Subset\Sigma_\lambda\setminus\Gamma_\lambda\) satisfies \(\xi_{j,R}=1\) on \(K\) for all sufficiently large \(R\) and \(j\).
	\end{lemma}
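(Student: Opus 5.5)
We construct the neighborhoods and cutoffs first, and then split the commutator of the product $\xi_{j,R}=\eta_{j,\lambda}\phi_R$ into a singular-set part and a far-field part. Since $\Gamma_\lambda\subset\{x_1=2\lambda\}$ is compact and $\lambda<0$, the set $\Gamma_\lambda$ lies at positive distance $|\lambda|$ from $T_\lambda$. By Lemma~\ref{Lem2.3} it has zero capacity. Let $M$ bound $W$ on some neighborhood $U_0\Subset\Sigma_\lambda$ of $\Gamma_\lambda$, and take $U_j=\{x:\dist(x,\Gamma_\lambda)<\delta_j\}$ with $\delta_j\downarrow0$ and $\delta_1$ small enough that $\overline{U_1}\subset U_0$. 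Then $\overline{U_{j+1}}\subset U_j$, $\overline{U_j}\Subset\Sigma_\lambda$, and $\bigcap_j\overline{U_j}=\Gamma_\lambda$. Lemma~\ref{Lem2.1}, applied with $K=\Gamma_\lambda$, $U=U_j$ and $\eps=1/j$, produces $\eta_{j,\lambda}$ with all the listed properties. Because $1-\eta_{j,\lambda}$ is supported in $U_j$ and $|\Gamma_\lambda|=0$ by Lemma~\ref{Lem2.2}, we get $\eta_{j,\lambda}\to1$ off $\Gamma_\lambda$, hence almost everywhere. If $K\Subset\Sigma_\lambda\setminus\Gamma_\lambda$, then $K\cap U_j=\emptyset$ for large $j$ and $K\subset B_R$ for large $R$, so $\xi_{j,R}=1$ on $K$.

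For \eqref{eq3.3} we use the elementary inequality
\[
|\xi_{j,R}(x)-\xi_{j,R}(y)|^2\le 2|\eta_{j,\lambda}(x)-\eta_{j,\lambda}(y)|^2+2|\phi_R(x)-\phi_R(y)|^2,
\]
which holds because all factors lie in $[0,1]$.

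The $\phi_R$ part does not depend on $j$. Using $2W(x)W(y)\le W(x)^2+W(y)^2$ and symmetry, it is bounded by
\[
\int W^2(x)\int\frac{|\phi_R(x)-\phi_R(y)|^2}{|x-y|^{N+2s}}\,dy\,dx.
\]
The standard estimate $\int|\phi_R(x)-\phi_R(y)|^2|x-y|^{-N-2s}\,dy\le C\min\{R^{-2s},\,R^{N}|x|^{-N-2s}\}$ then applies. Since $W\in L^{2_s^*}$, Hölder's inequality with exponents $2_s^*/2$ and $N/(2s)$ shows that this term tends to zero as $R\to\infty$. The contribution from $|x|\le CR$ is bounded by $\|W\|_{L^{2_s^*}(\{|x|>R/2\})}^2$ plus $\|W\|_{2_s^*}^2$ times $\|R^{-2s}\mathbf 1_{B_{2R}\setminus B_{R/2}}\|_{N/2s}$, where the second factor is bounded. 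To get decay from the latter, it is cleaner to use the $R$-scaled function $g_R(x)=\int|\phi_R(x)-\phi_R(y)|^2|x-y|^{-N-2s}dy=R^{-2s}g_1(x/R)$ with $g_1\in L^{N/2s}$ (since $g_1\lesssim(1+|x|)^{-N-2s}$). Then $\int W^2g_R\to0$, because the functions $W^2$ concentrate nowhere at scale $R$. We would argue this by splitting $W=W\mathbf 1_{B_\rho}+W\mathbf 1_{B_\rho^c}$: the second piece has small norm, and for the first, $\|g_R\mathbf 1_{B_\rho}\|_{N/2s}\to0$.

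The $\eta$ part is the main obstacle. Set $h_j=1-\eta_{j,\lambda}$, supported in $U_j$, with $\|h_j\|_{H^s}<1/j$. In the pair $(x,y)$ at least one point lies in $U_j$, say $x$. If $y\in U_0$ as well, both values of $W$ are at most $M$, and the contribution is at most $M^2[h_j]_{H^s}^2\le M^2j^{-2}$. If $y\notin U_0$, then $|x-y|\ge d:=\dist(U_1,U_0^c)>0$, and the contribution is bounded by
\[
M\int_{U_j}\int_{|x-y|\ge d}\frac{W(y)}{|x-y|^{N+2s}}\,dy\,dx\le CM|U_j|\,\|W\|_{2_s^*}\,\bigl\||z|^{-N-2s}\mathbf 1_{|z|\ge d}\bigr\|_{(2_s^*)'}.
\]
This tends to zero because $|U_j|\to|\Gamma_\lambda|=0$. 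Hence $\limsup_j$ of the $\eta$ part is zero for each $R$, and together with the $\phi_R$ estimate this gives \eqref{eq3.3}. The delicate point is exactly this split: the boundedness of $W$ near $\Gamma_\lambda$ is what allows the capacity smallness $\|h_j\|_{H^s}\to0$ to control the product weight $W(x)W(y)$ without any integrability of $W$ beyond $L^{2_s^*}$.
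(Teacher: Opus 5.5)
Your proposal is correct and follows essentially the paper's route. It uses the same construction via Lemmas~\ref{Lem2.1} and \ref{Lem2.3} and the same splitting of $\xi_{j,R}$. For the capacitary part you use the same near/far decomposition, resting on the bound for $W$ near $\Gamma_\lambda$; for the far interactions you use $|U_j|\to0$, where the paper symmetrizes and uses $\|1-\eta_{j,\lambda}\|_{L^2}\to0$, which is an inessential variation. The far-field part uses the same scaling $g_R(x)=R^{-2s}g_1(x/R)$.

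The only blemish is your first claim that H\"older's inequality alone makes the $\phi_R$ term vanish. Since $\|g_R\|_{L^{N/(2s)}}$ does not depend on $R$, this gives only a uniform bound. The decay really comes from the split at a fixed radius $\rho$ that you give afterward, exactly as in the paper.
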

	
	\begin{proof}
		Choose a bounded neighborhood \(U_0\Subset\Sigma_\lambda\) of \(\Gamma_\lambda\) on which \(W\) is bounded, and choose the nested \(U_j\) so that \(\overline{U_1}\subset U_0\). Lemmas~\ref{Lem2.1} and \ref{Lem2.3} supply the stated cutoffs. Put \(\zeta_j=1-\eta_{j,\lambda}\) and
		\[
		J_j(x)=\int_{\R^N} \frac{|\zeta_j(x)-\zeta_j(y)|^2}{|x-y|^{N+2s}}\dd y.
		\]
		By \(2ab\le a^2+b^2\) and symmetry, the remainder $\mathcal R_{\eta_{j,\lambda}}$ is bounded by \(\int_{\Sigma_\lambda}W^2J_j\). On \(U_0\) this integral is at most \(\|W\|_{L^\infty(U_0)}^2[\zeta_j]_{H^s}^2\), which tends to zero. For \(x\notin U_0\), support separation gives
		\[
		J_j(x)\le C\|\zeta_j\|_2^2(1+|x|)^{-N-2s}.
		\]
		The integral of \(W^2(1+|x|)^{-N-2s}\) is finite by H\"older's inequality and \(W\in L^{2_s^*}\). Hence the remainder $\mathcal R_{\eta_{j,\lambda}}$ vanishes as \(j\to\infty\).
		
		For the far-field cutoff define
		\[
		J_R(x)=\int_{\R^N} \frac{|\phi_R(x)-\phi_R(y)|^2}{|x-y|^{N+2s}}\dd y =R^{-2s}J_1(x/R).
		\]
		The Lipschitz bound for \(\phi\) makes \(J_1\) bounded, while compact support gives \(J_1(x)\le C|x|^{-N-2s}\) for \(|x|>4\). Consequently \(\sup_{R>1}\|J_R\|_{L^{N/(2s)}}<\infty\), and \(J_R\to0\) uniformly on bounded sets. For fixed \(L>0\),
		\[
		\int_{\Sigma_\lambda\setminus B_L}W^2J_R \le C\|W\|_{L^{2_s^*}(\R^N\setminus B_L)}^2.
		\]
		This is uniformly small for large \(L\); on \(B_L\), local \(L^2\)-integrability of \(W\) and uniform convergence of \(J_R\) give convergence to zero. Thus \(\int_{\Sigma_\lambda}W^2J_R\to0\). Finally,
		\[
		|\eta_{j,\lambda}(x)\phi_R(x)-\eta_{j,\lambda}(y)\phi_R(y)|^2 \le2|\eta_{j,\lambda}(x)-\eta_{j,\lambda}(y)|^2 +2|\phi_R(x)-\phi_R(y)|^2
		\]
		proves \eqref{eq3.3}. The nested neighborhoods eventually exclude each point outside \(\Gamma_\lambda\), and uniformly exclude every compact subset of its complement. Since \(|\Gamma_\lambda|=0\), this proves both final assertions.
	\end{proof}
	
	\section{Estimates for the Reflected Hartree Potential}\label{Sec4}
	
	For $\lambda<0$, put
	\[
	E_\lambda=\{x\in D_\lambda:w_\lambda(x)>0\}.
	\]
	Lemma~\ref{Lem2.7} gives $V,V_\lambda\in L^\infty(\R^N)$. Proposition~\ref{Prop2.2} gives continuity of $u$ and $u_\lambda$ in $D_\lambda$, so $E_\lambda$ is open there.
	
	\begin{lemma}\label{Lem4.1}
		For every \(x\in D_\lambda\),
		\begin{equation}\label{eq4.1}
			V(x)-V_\lambda(x) = \int_{\Sigma_\lambda} \left(u^p(y)-u_\lambda^p(y)\right) \left( \frac1{|x-y|^\mu}-\frac1{|x^\lambda-y|^\mu} \right)\dd y.
		\end{equation}
		The sets of zero capacity \(\Gamma\) and \(\Gamma_\lambda\) may be ignored in the integral.
	\end{lemma}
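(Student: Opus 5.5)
The plan is the standard reflection splitting of the Riesz-type potential. Fix \(x\in D_\lambda\). By Lemma~\ref{Lem2.2}, \(|\Gamma|=|\Gamma_\lambda|=0\), so \(V(x)=\int_{\R^N}u^p(y)|x-y|^{-\mu}\dd y\), and likewise for \(V_\lambda\) by Lemma~\ref{Lem3.1}. The hyperplane \(T_\lambda\) is also Lebesgue-null. Hence we may split \(\R^N=\Sigma_\lambda\cup R_\lambda(\Sigma_\lambda)\) up to a null set, and in the integral over \(R_\lambda(\Sigma_\lambda)\) substitute \(y=z^\lambda\) with \(z\in\Sigma_\lambda\). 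Since reflection is an isometry, \(|x-z^\lambda|=|x^\lambda-z|\). This gives
\[
V(x)=\int_{\Sigma_\lambda}\frac{u^p(y)}{|x-y|^\mu}\dd y+\int_{\Sigma_\lambda}\frac{u_\lambda^p(y)}{|x^\lambda-y|^\mu}\dd y .
\]

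Next, apply the same decomposition to \(V_\lambda(x)=V(x^\lambda)\), using \(|x^\lambda-y^\lambda|=|x-y|\). This gives
\[
V_\lambda(x)=\int_{\Sigma_\lambda}\frac{u^p(y)}{|x^\lambda-y|^\mu}\dd y+\int_{\Sigma_\lambda}\frac{u_\lambda^p(y)}{|x-y|^\mu}\dd y .
\]
Subtracting and regrouping the four terms yields \eqref{eq4.1}.

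The only point that needs care is justifying the subtraction, i.e.\ ruling out \(\infty-\infty\). All four integrals have nonnegative integrands, and each is bounded by \(V(x)\) or \(V(x^\lambda)\). Both values are finite for every \(x\in D_\lambda\): indeed \(x\notin\Gamma\) and \(x^\lambda\notin\Gamma\) because \(x\notin\Gamma_\lambda\), and Lemma~\ref{Lem2.5} gives local boundedness away from \(\Gamma\). Lemma~\ref{Lem2.7} gives global boundedness as well. In fact Lemma~\ref{Lem2.7} states \(V\in L^\infty\), which bounds the integral at every point. Both sides of \eqref{eq4.1} are therefore finite. Moreover, the right-hand integrand is absolutely integrable, being dominated by the sum of the four nonnegative integrands, so the regrouping is legitimate.

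The identity holds pointwise, not just almost everywhere, because \(V\) is defined by the integral at every point, where it is finite. The remark that \(\Gamma\) and \(\Gamma_\lambda\) may be ignored follows from their having measure zero (Lemma~\ref{Lem2.2} together with Lemma~\ref{Lem2.3}). I expect no genuine obstacle: the only subtlety is this finiteness and the pointwise interpretation.
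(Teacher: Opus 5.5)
Your proposal is correct and follows essentially the same route as the paper: you split the potential across $T_\lambda$, reflect the exterior half-space onto $\Sigma_\lambda$ using $|x-z^\lambda|=|x^\lambda-z|$, do the same for $V_\lambda(x)=V(x^\lambda)$, and subtract. Your explicit finiteness check via Lemma~\ref{Lem2.7}, which rules out $\infty-\infty$ and justifies the regrouping, is a useful addition; the paper leaves this implicit here and only uses it later, in Lemma~\ref{Lem4.3}.
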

	
	\begin{proof}
		Write \(\Sigma=\Sigma_\lambda\). By Lemmas~\ref{Lem2.2} and \ref{Lem2.3}, both \(\Gamma\) and \(\Gamma_\lambda\) have zero Lebesgue measure. Hence, after redefining \(u\) and \(u_\lambda\) arbitrarily on these sets, the Hartree potentials may be computed over all of \(\R^N\).
		
		Fix \(x\in D_\lambda\subset\Sigma\). The hyperplane \(T_\lambda\) has zero Lebesgue measure, and reflection maps
		\[
		(\R^N\setminus\Sigma)\setminus T_\lambda=\{x_1>\lambda\}
		\]
		bijectively onto \(\Sigma\). Hence, modulo the null boundary \(T_\lambda\), the change of variables below is valid:
		\begin{align*}
			V(x) &= \int_{\Sigma}\frac{u^p(y)}{|x-y|^\mu}\dd y + \int_{\R^N\setminus\Sigma}\frac{u^p(y)}{|x-y|^\mu}\dd y \\
			&= \int_{\Sigma}\frac{u^p(y)}{|x-y|^\mu}\dd y + \int_{\Sigma}\frac{u^p(z^\lambda)}{|x-z^\lambda|^\mu}\dd z \\
			&= \int_{\Sigma}\frac{u^p(y)}{|x-y|^\mu}\dd y + \int_{\Sigma}\frac{u_\lambda^p(z)}{|x^\lambda-z|^\mu}\dd z.
		\end{align*}
		Renaming \(z\) as \(y\), we get
		\begin{equation}\label{eq4.2}
			V(x) = \int_{\Sigma}\frac{u^p(y)}{|x-y|^\mu}\dd y + \int_{\Sigma}\frac{u_\lambda^p(y)}{|x^\lambda-y|^\mu}\dd y.
		\end{equation}
		Applying the same decomposition to \(V_\lambda(x)=V(x^\lambda)\), we find
		\begin{equation}\label{eq4.3}
			V_\lambda(x) = \int_{\Sigma}\frac{u_\lambda^p(y)}{|x-y|^\mu}\dd y + \int_{\Sigma}\frac{u^p(y)}{|x^\lambda-y|^\mu}\dd y.
		\end{equation}
		Subtracting \eqref{eq4.3} from \eqref{eq4.2} yields \eqref{eq4.1}.
	\end{proof}
	
	\begin{lemma}\label{Lem4.2}
		For every \(x,y\in\overline{\Sigma_\lambda}\) with \(x\ne y\),
		\[
		\frac1{|x-y|^\mu}-\frac1{|x^\lambda-y|^\mu}\ge0.
		\]
		The inequality is strict whenever \(x,y\in\Sigma_\lambda\).
	\end{lemma}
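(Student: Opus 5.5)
The plan is to compare the two distances directly by reducing the claim to an inequality between $|x-y|$ and $|x^\lambda-y|$. The function $t\mapsto t^{-\mu}$ is strictly decreasing on $(0,+\infty)$ because $\mu>0$. It therefore suffices to show
\[
|x-y|\le|x^\lambda-y|\qquad\text{for }x,y\in\overline{\Sigma_\lambda},
\]
with strict inequality when $x,y\in\Sigma_\lambda$. When $x\ne y$ the left-hand distance is positive, so both reciprocals are finite and the difference makes sense.

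Since $x^\lambda=(2\lambda-x_1,x')$, the transverse components of $x-y$ and $x^\lambda-y$ coincide. Expanding the squares gives
\[
|x^\lambda-y|^2-|x-y|^2=(2\lambda-x_1-y_1)^2-(x_1-y_1)^2 .
\]
Factoring this difference of squares yields
\[
\bigl(2\lambda-2x_1\bigr)\bigl(2\lambda-2y_1\bigr)=4(\lambda-x_1)(\lambda-y_1).
\]
For $x,y\in\overline{\Sigma_\lambda}$ both factors $\lambda-x_1$ and $\lambda-y_1$ are nonnegative, so the product is nonnegative. For $x,y\in\Sigma_\lambda$ both factors are strictly positive, so the product is strictly positive.

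Applying the strict monotonicity of $t\mapsto t^{-\mu}$ then gives the inequality on $\overline{\Sigma_\lambda}$ and its strict version on the open half-space. In the strict case $|x^\lambda-y|>|x-y|>0$. There is no genuine obstacle here. The only point needing care is the case where $x$ or $y$ lies on $T_\lambda$: then the product vanishes, $|x^\lambda-y|=|x-y|$, and only equality holds. This is exactly why strictness is claimed only in the interior.
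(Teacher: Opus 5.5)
Your proposal is correct and matches the paper's proof: both compute $|x^\lambda-y|^2-|x-y|^2=4(\lambda-x_1)(\lambda-y_1)$, read off nonnegativity on $\overline{\Sigma_\lambda}$ and strict positivity on $\Sigma_\lambda$, and conclude by the strict decrease of $r\mapsto r^{-\mu}$. The finiteness of the kernels for $x\ne y$ also follows in your argument from $|x^\lambda-y|\ge|x-y|>0$.
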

	
	\begin{proof}
		A direct computation gives
		\[
		|x^\lambda-y|^2-|x-y|^2 =4(\lambda-x_1)(\lambda-y_1)\ge0.
		\]
		Because \(x\ne y\), the kernels are finite. Since \(r\mapsto r^{-\mu}\) is strictly decreasing, the asserted inequality follows. If \(x,y\in\Sigma_\lambda\), both factors on the right are positive, and the inequality is strict.
	\end{proof}
	
	\begin{lemma}\label{Lem4.3}
		For every \(x\in D_\lambda\),
		\begin{equation}\label{eq4.4}
			\bigl(V(x)-V_\lambda(x)\bigr)^+ \le p\int_{E_\lambda} \frac{u^{p-1}(y)W_\lambda(y)}{|x-y|^\mu}\dd y.
		\end{equation}
	\end{lemma}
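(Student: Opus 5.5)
We start from the reflection identity \eqref{eq4.1} of Lemma~\ref{Lem4.1}. For fixed $x\in D_\lambda$ we write the kernel difference as
\[
k_\lambda(x,y)=|x-y|^{-\mu}-|x^\lambda-y|^{-\mu}.
\]
By Lemma~\ref{Lem4.2}, $k_\lambda(x,y)\ge0$ for $y\in\Sigma_\lambda$, $y\neq x$; the single point $y=x$ is Lebesgue-null, so we ignore it. We then split the integration domain $\Sigma_\lambda$ (modulo the null sets $\Gamma_\lambda$ and $T_\lambda$) into $E_\lambda$, where $u>u_\lambda$, and its complement in $D_\lambda$, where $u\le u_\lambda$.

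On the complement, $u^p-u_\lambda^p\le0$ and $k_\lambda\ge0$, so that part contributes a nonpositive quantity. Hence
\[
V(x)-V_\lambda(x)\le\int_{E_\lambda}\bigl(u^p(y)-u_\lambda^p(y)\bigr)k_\lambda(x,y)\dd y.
\]
The right-hand side is nonnegative, so the same bound holds for the positive part $(V-V_\lambda)^+(x)$.

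On $E_\lambda$ we use the mean value theorem for $t\mapsto t^p$ with $p>1$. Since $0<u_\lambda(y)<u(y)$ there, the derivative $pt^{p-1}$ is increasing in $t$, and we get
\[
u^p(y)-u_\lambda^p(y)\le p\,u^{p-1}(y)\bigl(u(y)-u_\lambda(y)\bigr)=p\,u^{p-1}(y)W_\lambda(y).
\]
Here we use that $W_\lambda=w_\lambda^+$ on $E_\lambda\subset\Sigma_\lambda$. The values $u,u_\lambda$ are those of the continuous representative, which is positive a.e. Finally we drop the subtracted term, using $0\le k_\lambda(x,y)\le|x-y|^{-\mu}$. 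Together these give \eqref{eq4.4}.

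The only point needing care is integrability, so that splitting the integral is legitimate rather than an $\infty-\infty$ manipulation. Lemma~\ref{Lem2.7} gives $V,V_\lambda\in L^\infty$, so both integrals in \eqref{eq4.2}--\eqref{eq4.3} are finite. Consequently $u^p k_\lambda(x,\cdot)$ and $u_\lambda^p k_\lambda(x,\cdot)$ are each integrable on $\Sigma_\lambda$, and the splitting into $E_\lambda$ and its complement is justified. If the right-hand side of \eqref{eq4.4} is infinite, there is nothing to prove. I expect no real obstacle beyond this bookkeeping.
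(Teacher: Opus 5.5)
Your proposal is correct and follows the paper's proof essentially step for step. You start from the reflection identity \eqref{eq4.1} with the kernel bound $0\le k_\lambda\le|x-y|^{-\mu}$ from Lemma~\ref{Lem4.2}, justify absolute integrability through the finite integrals in \eqref{eq4.2}--\eqref{eq4.3}, discard the nonpositive contribution where $u\le u_\lambda$, and apply the mean value bound $a^p-b^p\le p\,a^{p-1}(a-b)$ on $E_\lambda$. Your explicit appeal to the pointwise bounds on $V$ and $V_\lambda$ from Lemma~\ref{Lem2.7} just makes precise the finiteness the paper takes from those decompositions.
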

	
	\begin{proof}
		Fix \(x\in D_\lambda\). By Lemma~\ref{Lem4.1},
		\[
		V(x)-V_\lambda(x) = \int_{\Sigma_\lambda} \bigl(u^p(y)-u_\lambda^p(y)\bigr)K_\lambda(x,y)\dd y,
		\]
		where
		\[
		K_\lambda(x,y) := \frac1{|x-y|^\mu}-\frac1{|x^\lambda-y|^\mu}.
		\]
		The kernel is understood off the diagonal \(y=x\), which is negligible in the integral. By Lemma~\ref{Lem4.2},
		\[
		0\le K_\lambda(x,y)\le |x-y|^{-\mu} \quad\text{for a.e. }y\in\Sigma_\lambda.
		\]
		Moreover, the integrand in \eqref{eq4.1} is absolutely integrable:
		\[
		\int_{\Sigma_\lambda} |u^p(y)-u_\lambda^p(y)|K_\lambda(x,y)\dd y \le \int_{\Sigma_\lambda} \frac{u^p(y)+u_\lambda^p(y)}{|x-y|^\mu}\dd y <+\infty.
		\]
		The last two integrals are finite by the decompositions \eqref{eq4.2}-\eqref{eq4.3}. Consequently the positive and negative parts may be separated. On \(D_\lambda\setminus E_\lambda\) one has \(u\le u_\lambda\), hence
		\[
		\bigl(u^p(y)-u_\lambda^p(y)\bigr)K_\lambda(x,y)\le0 \qquad\text{for a.e. }y\in D_\lambda\setminus E_\lambda.
		\]
		Therefore,
		\[
		\bigl(V(x)-V_\lambda(x)\bigr)^+ \le \int_{E_\lambda} \bigl(u^p(y)-u_\lambda^p(y)\bigr)K_\lambda(x,y)\dd y.
		\]
		Since \(p>1\) and \(u_\lambda\le u\) on \(E_\lambda\), the mean value theorem gives
		\[a^p-b^p\le p\,a^{p-1}(a-b)
		\qquad\text{whenever }0\le b\le a.
		\]
		Applying this with \(a=u(y)\) and \(b=u_\lambda(y)\), and using \(u-u_\lambda=W_\lambda\) on \(E_\lambda\), we get
		\[u^p(y)-u_\lambda^p(y)
		\le p\,u^{p-1}(y)W_\lambda(y).
		\]
		Finally, since \(K_\lambda(x,y)\le |x-y|^{-\mu}\), we conclude that
		\[
		\bigl(V(x)-V_\lambda(x)\bigr)^+ \le p\int_{E_\lambda} \frac{u^{p-1}(y)W_\lambda(y)}{|x-y|^\mu}\dd y,
		\]
		which proves \eqref{eq4.4}.
	\end{proof}
	
	\begin{proposition}\label{Prop4.1}
		Let \(\lambda<0\). Under conditions \ref{Ass1.1} and \ref{Ass1.2}, one has \(W_\lambda\in\dot H^s(\R^N)\) and
		\begin{equation}\label{eq4.5}
			[W_\lambda]_{H^s(\R^N)}^2 \le C\mathcal M(E_\lambda)[W_\lambda]_{H^s(\R^N)}^2,
		\end{equation}
		where \(\mathcal M\) is defined by \eqref{eq2.3}. The constant \(C\) is independent of \(\lambda\) and \(E_\lambda\).
	\end{proposition}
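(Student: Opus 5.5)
We test the difference equation for $w_\lambda$ with $W_\lambda\xi_{j,R}^2$, where $\xi_{j,R}=\eta_{j,\lambda}\phi_R$ is the cutoff from Lemma~\ref{Lem3.4}. Then we remove the cutoffs, first $j\to\infty$ and then $R\to\infty$. Lemma~\ref{Lem3.4} needs $W_\lambda\in L^{2_s^*}(\R^N)$, vanishing outside $\Sigma_\lambda$ and bounded near $\Gamma_\lambda$. These hold: on $\Sigma_\lambda$ we have $\dist(x,\Gamma)\ge|\lambda|$, so Lemma~\ref{Lem2.7} gives $u\in L^\infty\cap L^{2_s^*}$ there, and $0\le W_\lambda\le u\one_{\Sigma_\lambda}$. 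Since $\xi_{j,R}$ is smooth, compactly supported and vanishes near $\Gamma_\lambda$, Lemma~\ref{Lem3.2} applies with $f=Vu^q$ and gives
\[
\E(w_\lambda,W_\lambda\xi^2)=\int_{\Sigma_\lambda}(Vu^q-V_\lambda u_\lambda^q)W_\lambda\xi^2\dd x,
\]
with $W_\lambda\xi\in H^s(\R^N)$ and an absolutely convergent pairing. The remainder $\mathcal R_\xi$ is finite; this follows from the bounds in the proof of Lemma~\ref{Lem3.4}. Lemma~\ref{Lem3.3} then gives the lower bound $\frac{C_{N,s}}2([W_\lambda\xi]^2_{H^s}-\mathcal R_\xi)$.

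For the right-hand side, only points of $E_\lambda$ contribute. There we split
\[
Vu^q-V_\lambda u_\lambda^q=(V-V_\lambda)u^q+V_\lambda(u^q-u_\lambda^q).
\]
The second term is at most $qV_\lambda u^{q-1}W_\lambda$ by the mean value inequality, since $u\ge u_\lambda$ on $E_\lambda$. The first term is at most $(V-V_\lambda)^+u^q$, which Lemma~\ref{Lem4.3} bounds by $p\,u^q(x)\int_{E_\lambda}u^{p-1}(y)W_\lambda(y)|x-y|^{-\mu}\dd y$.

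Set $Z=W_\lambda\xi$. A difficulty is that $W_\lambda$ appears un-cut in one factor. The cleanest route is to use $\xi\le1$ only inside the double integral. This gives a bound by the left side of \eqref{eq2.1} with the product $W_\lambda(y)W_\lambda(x)\xi^2(x)$, which is not symmetric in $Z$. I would therefore first apply the HLS/Hölder step of Proposition~\ref{Prop2.1} directly with two different functions. The result is
\[
C\|u\|^{p+q-1}_{L^{m_H}(E_\lambda)}\|W_\lambda\|_{L^{2_s^*}}\|W_\lambda\xi^2\|_{L^{2_s^*}},
\]
and each norm is finite. The term $qV_\lambda u^{q-1}W_\lambda^2\xi^2$ is handled by \eqref{eq2.2} with $u_\lambda^p$ (reflected version), giving $C\|u\|^p_{L^{m_H}}\|u\|^{q-1}_{L^{m_H}(E_\lambda)}\|Z\|_{L^{2_s^*}}^2$.

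Putting these together,
\[
[W_\lambda\xi_{j,R}]^2_{H^s}\le \mathcal R_{\xi_{j,R}}+C\mathcal M(E_\lambda)\|W_\lambda\|_{L^{2_s^*}}^2 ,
\]
with the right side uniform in $j,R$. Letting $j\to\infty$ and then $R\to\infty$, \eqref{eq3.3} kills the remainder. Since $\xi_{j,R}\to1$ a.e., Fatou's lemma gives $[W_\lambda]^2_{H^s}\le C\mathcal M(E_\lambda)\|W_\lambda\|^2_{L^{2_s^*}}<\infty$. In particular $W_\lambda\in\dot H^s(\R^N)$. Applying the Sobolev inequality to $W_\lambda$, which is now legitimate, yields \eqref{eq4.5}.

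The constants come only from HLS, Sobolev and $p,q$, so they do not depend on $\lambda$ or $E_\lambda$. The main obstacle I expect is the order of operations. One has to keep the right-hand side in terms of $\|W_\lambda\|_{L^{2_s^*}}$ rather than the seminorm, which is not yet known to be finite. One also has to check that Lemma~\ref{Lem3.3}'s remainder and integrability hypotheses hold for each $(j,R)$ before passing to the limit.
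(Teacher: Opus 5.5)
Your proposal is correct and follows the paper's proof essentially step for step: you test with $W_\lambda\xi_{j,R}^2$, use Lemma~\ref{Lem3.3} for the lower bound, and control the source through Lemma~\ref{Lem4.3} and the pre-Sobolev HLS/H\"older steps of Proposition~\ref{Prop2.1} in terms of $\|W_\lambda\|_{L^{2_s^*}}^2$; you then remove the cutoffs with Lemma~\ref{Lem3.4} and Fatou, and only afterwards invoke Sobolev. The differences are cosmetic: you split the source as $(V-V_\lambda)u^q+V_\lambda(u^q-u_\lambda^q)$ and use the reflected form of \eqref{eq2.2}, whereas the paper writes $V(u^q-u_\lambda^q)+(V-V_\lambda)u_\lambda^q$ with $u_\lambda^q\le u^q$, and you take iterated limits in $j$ and $R$ (Fatou twice) instead of a diagonal sequence.
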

	
	\begin{proof}
		Lemma~\ref{Lem2.7} gives
		\[
		W_\lambda\in L^{2_s^*}(\R^N), \qquad 0\le W_\lambda\le u\quad\text{in }\Sigma_\lambda.
		\]
		Moreover, \(W_\lambda\) is bounded near \(\Gamma_\lambda\), since this set is separated from \(\Gamma\). Use the cutoffs \(\xi_{j,R}\) from Lemma~\ref{Lem3.4}, and put
		\[
		\Psi_{j,R}=W_\lambda\xi_{j,R}^2, \qquad Z_{j,R}=W_\lambda\xi_{j,R}.
		\]
		Write \(\mathcal R_{j,R}=\mathcal R_{\xi_{j,R}}\) for the remainder in Lemma~\ref{Lem3.3}. It is finite by the estimates in Lemma~\ref{Lem3.4}. Lemma~\ref{Lem3.2} gives the absolutely convergent identity
		\[
		\E(w_\lambda,\Psi_{j,R}) =\int_{E_\lambda}(Vu^q-V_\lambda u_\lambda^q)W_\lambda\xi_{j,R}^2\dd x.
		\]
		On \(E_\lambda\), one has
		\[
		u^q-u_\lambda^q\le q u^{q-1}W_\lambda, \qquad u_\lambda^q\le u^q.
		\]
		Since \(0\le\xi_{j,R}\le1\), Lemma~\ref{Lem4.3} therefore yields
		\begin{align*}
			\E(w_\lambda,\Psi_{j,R}) \le q\int_{E_\lambda}Vu^{q-1}W_\lambda^2\dd x +p\iint_{E_\lambda\times E_\lambda} \frac{u^{p-1}(y)W_\lambda(y)u^q(x)W_\lambda(x)}{|x-y|^\mu}\dd y\dd x.
		\end{align*}
		Both terms are finite by the \(L^{2_s^*}\)-integrability of \(W_\lambda\) and the stated \(L^{m_H}\)-bounds for \(u\). Indeed, the HLS and H\"older steps in the proof of Proposition~\ref{Prop2.1}, before applying the Sobolev inequality, give
		\begin{align*}
			\int_{E_\lambda}Vu^{q-1}W_\lambda^2\dd x &\le C\|u\|_{L^{m_H}(\R^N)}^p \|u\|_{L^{m_H}(E_\lambda)}^{q-1} \|W_\lambda\|_{L^{2_s^*}(\R^N)}^2,\\
			\iint_{E_\lambda\times E_\lambda} \frac{u^{p-1}(y)W_\lambda(y)u^q(x)W_\lambda(x)}{|x-y|^\mu}\dd y\dd x &\le C\|u\|_{L^{m_H}(E_\lambda)}^{p+q-1} \|W_\lambda\|_{L^{2_s^*}(\R^N)}^2.
		\end{align*}
		Combining these bounds with Lemma~\ref{Lem3.3}, we obtain
		\[
		[Z_{j,R}]_{H^s(\R^N)}^2 \le C\mathcal M(E_\lambda)\|W_\lambda\|_{L^{2_s^*}(\R^N)}^2 +\mathcal R_{j,R}.
		\]
		By Lemma~\ref{Lem3.4}, there are \(R_n\to\infty\) and \(j_n\to\infty\) such that
		\[
		\mathcal R_{j_n,R_n}\to0, \qquad \xi_{j_n,R_n}\to1\quad\text{a.e. in }\R^N.
		\]
		Fatou's lemma in the Gagliardo integral gives
		\[
		[W_\lambda]_{H^s(\R^N)}^2 \le C\mathcal M(E_\lambda)\|W_\lambda\|_{L^{2_s^*}(\R^N)}^2<\infty.
		\]
		Together with \(W_\lambda\in L^{2_s^*}(\R^N)\), this proves \(W_\lambda\in\dot H^s(\R^N)\). The homogeneous Sobolev inequality now yields \eqref{eq4.5}.
	\end{proof}
	
	\begin{corollary}\label{Cor4.1}
		If
		\begin{equation}\label{eq4.6}
			C\mathcal M(E_\lambda)<1,
		\end{equation}
		then
		\[
		W_\lambda\equiv0\quad\text{in }\R^N.
		\]
		Equivalently,
		\[u(x)\le u_\lambda(x)
		\quad\text{for a.e. }x\in\Sigma_\lambda.
		\]
		By Proposition~\ref{Prop2.2}, the inequality holds pointwise in \(D_\lambda\).
	\end{corollary}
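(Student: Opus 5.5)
The plan is to combine inequality \eqref{eq4.5} of Proposition~\ref{Prop4.1} with the smallness assumption \eqref{eq4.6}. This shows that the homogeneous seminorm of \(W_\lambda\) vanishes. Finiteness of that seminorm, together with \(W_\lambda\in L^{2_s^*}(\R^N)\), then forces \(W_\lambda\equiv0\).

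\textbf{Step 1: the seminorm vanishes.} Proposition~\ref{Prop4.1} gives \(W_\lambda\in\dot H^s(\R^N)\), so \([W_\lambda]_{H^s(\R^N)}<\infty\) and
\[
[W_\lambda]_{H^s(\R^N)}^2\le C\mathcal M(E_\lambda)[W_\lambda]_{H^s(\R^N)}^2 .
\]
Here \(C\) is the constant of \eqref{eq4.5}, which is the same as in \eqref{eq4.6}. Rearranging gives
\[
\bigl(1-C\mathcal M(E_\lambda)\bigr)[W_\lambda]_{H^s(\R^N)}^2\le0.
\]
Since \(1-C\mathcal M(E_\lambda)>0\), we get \([W_\lambda]_{H^s(\R^N)}=0\). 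The point to watch is that the subtraction is legitimate only because the seminorm is known to be finite; Proposition~\ref{Prop4.1} supplies exactly this.

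\textbf{Step 2: from zero seminorm to zero function.} By the homogeneous Sobolev inequality,
\[
\|W_\lambda\|_{L^{2_s^*}(\R^N)}^2\le C_{\mathrm{Sob}}[W_\lambda]_{H^s(\R^N)}^2=0,
\]
so \(W_\lambda=0\) a.e. The same conclusion also follows without Sobolev: a zero Gagliardo seminorm makes \(W_\lambda\) a.e. constant, and the only constant in \(L^{2_s^*}\) is zero.

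\textbf{Step 3: the equivalent forms.} By definition, \(W_\lambda=(u-u_\lambda)^+\one_{\Sigma_\lambda}\). Hence \(W_\lambda=0\) a.e. is exactly the statement \(u\le u_\lambda\) a.e. in \(\Sigma_\lambda\).

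\textbf{Step 4: pointwise inequality.} By Proposition~\ref{Prop2.2} and Lemma~\ref{Lem3.1}, \(u\) and \(u_\lambda\) are continuous on the open set \(D_\lambda\). The set \(\{x\in D_\lambda:u>u_\lambda\}\) is therefore open. It is also Lebesgue-null, since \(|\Gamma\cup\Gamma_\lambda|=0\) by Lemma~\ref{Lem2.2}. An open null set is empty, so \(u\le u_\lambda\) everywhere in \(D_\lambda\).

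There is no real obstacle in this argument; all the analytic work is already contained in Proposition~\ref{Prop4.1}.
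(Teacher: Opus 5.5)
Your proof is correct and follows the paper's argument exactly: Proposition~\ref{Prop4.1} with \eqref{eq4.6} forces $[W_\lambda]_{H^s(\R^N)}=0$, the Sobolev inequality gives $W_\lambda=0$ a.e., and continuity on $D_\lambda$ upgrades this to the pointwise comparison. One small correction in Step~4: the set $\{x\in D_\lambda:u>u_\lambda\}$ is null because it is contained in $\{W_\lambda>0\}$, which is null by Step~2, and not because $|\Gamma\cup\Gamma_\lambda|=0$; citing the latter fact is harmless but does not give nullity.
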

	
	\begin{proof}
		By Proposition~\ref{Prop4.1} and \eqref{eq4.6}, $[W_\lambda]_{H^s}=0$. The Sobolev inequality gives $W_\lambda=0$ almost everywhere, and continuity in $D_\lambda$ gives the pointwise comparison.
	\end{proof}
	
	\begin{corollary}\label{Cor4.2}
		Fix a constant \(C>0\) for which Proposition~\ref{Prop4.1} holds, and define
		\[
		\Theta(t)=t^{p+q-1} +\|u\|_{L^{m_H}(\R^N)}^p t^{q-1}, \qquad t\ge0.
		\]
		Then \(\Theta\) is continuous and strictly increasing, \(\Theta(0)=0\), and \(\Theta(t)\to+\infty\) as \(t\to+\infty\). Consequently there exists a unique number
		\[
		\tau_*=\Theta^{-1}(C^{-1})>0, \qquad\text{equivalently}\qquad C\Theta(\tau_*)=1.
		\]
		For every \(\lambda<0\),
		\[
		\|u\|_{L^{m_H}(E_\lambda)}<\tau_* \quad\Longrightarrow\quad W_\lambda\equiv0 \quad\Longrightarrow\quad u\le u_\lambda\text{ in }D_\lambda.
		\]
		The threshold \(\tau_*\) may depend on the fixed solution \(u\) and on the structural parameters and is independent of the plane position \(\lambda\).
	\end{corollary}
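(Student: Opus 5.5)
The proof reduces to Corollary~\ref{Cor4.1} once the algebraic properties of \(\Theta\) are in place, so I would organize it in three short steps.

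\emph{Properties of \(\Theta\).} The exponents \(p+q-1\) and \(q-1\) are positive, because \(p,q>1\). The coefficient \(\|u\|_{L^{m_H}(\R^N)}^p\) is finite by \ref{Ass1.2} and nonnegative; it is in fact positive, since \(u>0\) a.e. Hence \(t\mapsto t^{p+q-1}\) and \(t\mapsto\|u\|_{L^{m_H}}^p t^{q-1}\) are continuous, nondecreasing, and vanish at \(t=0\). The first is strictly increasing, so \(\Theta\) is strictly increasing, \(\Theta(0)=0\), and \(\Theta(t)\ge t^{p+q-1}\to+\infty\). By the intermediate value theorem and strict monotonicity, there is a unique \(\tau_*>0\) with \(\Theta(\tau_*)=C^{-1}\); it is positive because \(C^{-1}>0=\Theta(0)\).

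\emph{Link with \(\mathcal M\).} Comparing with \eqref{eq2.3} gives the identity
\[
\mathcal M(E)=\Theta\bigl(\|u\|_{L^{m_H}(E)}\bigr)
\]
for every measurable \(E\). Now fix \(\lambda<0\) and assume \(\|u\|_{L^{m_H}(E_\lambda)}<\tau_*\). Strict monotonicity gives
\[
C\mathcal M(E_\lambda)=C\Theta\bigl(\|u\|_{L^{m_H}(E_\lambda)}\bigr)<C\Theta(\tau_*)=1,
\]
which is exactly \eqref{eq4.6}.

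\emph{Conclusion.} Corollary~\ref{Cor4.1} then gives \(W_\lambda\equiv0\), that is, \(u\le u_\lambda\) a.e. in \(\Sigma_\lambda\). Continuity of the representative from Proposition~\ref{Prop2.2} upgrades this to a pointwise inequality on \(D_\lambda\).

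\emph{Independence of \(\lambda\).} The constant \(C\) of Proposition~\ref{Prop4.1} does not depend on \(\lambda\) or \(E_\lambda\). The function \(\Theta\) involves only the global norm \(\|u\|_{L^{m_H}(\R^N)}\), and reflection invariance (Lemma~\ref{Lem3.1}) keeps this norm the same for \(u_\lambda\). So \(\tau_*\) depends only on \(u\) and the structural parameters.

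No step is a genuine obstacle. The only point needing care is that \(C\) is fixed once and for all, uniformly in \(\lambda\), as Proposition~\ref{Prop4.1} allows. Otherwise \(\tau_*\) would depend on the plane.
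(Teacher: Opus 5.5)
Your proposal is correct and follows the paper's proof. The paper likewise notes that \eqref{eq2.3} gives $\mathcal M(E_\lambda)=\Theta(\|u\|_{L^{m_H}(E_\lambda)})$, so the hypothesis yields $C\mathcal M(E_\lambda)<1$ and Corollary~\ref{Cor4.1} applies; your explicit check of the continuity, strict monotonicity, and limits of $\Theta$, and hence of the existence, uniqueness, and positivity of $\tau_*$, fills in elementary details the paper leaves implicit.
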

	
	\begin{proof}
		By \eqref{eq2.3},
		\[
		\mathcal M(E_\lambda) =\Theta\!\left(\|u\|_{L^{m_H}(E_\lambda)}\right).
		\]
		Thus the stated strict inequality implies \(C\mathcal M(E_\lambda)<1\), and Corollary~\ref{Cor4.1} applies.
	\end{proof}
	
	\section{Strong Comparison and Moving Planes}\label{Sec5}
	
	In this section, $u$ satisfies the hypotheses of Theorem~\ref{Thm1.1}.
	We shall use the following strong maximum principle for antisymmetric supersolutions. For a bounded open set \(\Omega\subset\R^N\), write
	\[
	\mathcal D^s(\Omega):= \left\{ v\in \Ls(\R^N):\ \text{there exists an open set }U\text{ with }\overline\Omega\subset U\Subset\R^N \text{ such that }v\in H^s(U) \right\}.
	\]
	For functions in this class, the local and tail estimates in Lemma~\ref{Lem3.2} show that $\E(v,\varphi)$ is absolutely convergent and continuous in $\varphi\in H_0^s(\Omega)$. This class is therefore contained in the class used by Fall and Jarohs \cite{FallJarohs}, which is defined by finiteness of these pairings. Also $cv\in L^2(\Omega)$ for $c\in L^\infty(\Omega)$, as required in their weak formulation.
	
	\begin{lemma}\label{Lem5.1}
		Let \(H\subset\R^N\) be a half-space, let \(\Omega\subset H\) be bounded, connected, and open, and let \(c\in L^\infty(\Omega)\). Assume that \(v\in \mathcal D^s(\Omega)\) is an entire antisymmetric supersolution of
		\[
		(-\Delta)^s v=c(x)v \quad\text{in }\Omega,
		\]
		that is,
		\[
		\E(v,\varphi)\ge \int_\Omega c(x)v\varphi\dd x
		\]
		for every nonnegative \(\varphi\in H_0^s(\Omega)\), \(v\ge0\) a.e. in \(H\), and \(v(x^\ast)=-v(x)\) for a.e. \(x\in H\), where \(x^\ast\) denotes reflection across \(\partial H\). Then either \(v\equiv0\) a.e. in \(\R^N\), or
		\[
		\operatorname*{ess\,inf}_{K} v>0 \quad\text{for every compact set }K\Subset\Omega.
		\]
		In particular, if \(v\) has a continuous representative in \(\Omega\), then in the second case \(v>0\) everywhere in \(\Omega\).
	\end{lemma}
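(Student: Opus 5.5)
This lemma is essentially the strong maximum principle for antisymmetric supersolutions of Fall and Jarohs \cite{FallJarohs}, so the plan is to verify that our setting fits their framework and, where needed, to reproduce the key step directly. The remarks preceding the statement already show that \(\mathcal D^s(\Omega)\) lies in their admissible class: \(\E(v,\varphi)\) converges absolutely and is continuous on \(H_0^s(\Omega)\), and \(cv\in L^2(\Omega)\). The proof then reduces to a local positivity statement. If \(v\not\equiv0\) a.e., then \(v\) has a uniform positive lower bound on every ball compactly contained in \(\Omega\). A chaining argument over overlapping balls, using that \(\Omega\) is connected, then gives the bound on every compact \(K\Subset\Omega\).

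\emph{Step 1.} Assume \(v\not\equiv0\). By antisymmetry, \(v\) is then positive on a subset \(A\subset H\) of positive measure. Fix a ball \(B=B_r(x_0)\Subset\Omega\), and test the supersolution inequality with a nonnegative function of the form \(\varphi=(\delta\psi-v)^+\) or with a barrier comparison. The more robust choice is the one used by Fall and Jarohs: compare \(v\) with \(\epsilon\,\psi\), where \(\psi\) is an antisymmetric function built from a smooth bump \(\psi_0\in C_c^\infty(B)\), namely \(\psi=\psi_0-\psi_0\circ R\) with \(R\) the reflection across \(\partial H\).

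\emph{Step 2.} The key computation is the nonlocal gain. For \(x\in B\), the contribution of \(A\) to \((-\Delta)^s\) of \(v\) (restricted away from \(B\)) is strictly negative. It is quantified by
\[
-\int_{A\setminus B} v(y)\Bigl(|x-y|^{-N-2s}-|x-y^\ast|^{-N-2s}\Bigr)\dd y<0,
\]
where the kernel difference is positive for \(x,y\in H\), exactly as in Lemma~\ref{Lem4.2}. This term dominates \(\|c\|_\infty\epsilon\psi\) and \((-\Delta)^s(\epsilon\psi)\) once \(\epsilon\) is small, provided \(A\setminus B\) has positive measure. If instead \(A\subset B\) up to a null set, I shrink \(B\), or pick a different ball, so that this holds. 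Testing the inequality for \(v-\epsilon\psi\) against \((\epsilon\psi-v)^+\one_H\) with the antisymmetric splitting of Lemma~\ref{Lem3.3} (the cross terms have a favorable sign) then gives \((\epsilon\psi-v)^+=0\) on \(B\). Hence \(v\ge\epsilon\psi_0>0\) on a smaller ball.

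\emph{Step 3.} Propagate the bound through \(\Omega\). Once \(v\) is positive on some ball, the set of positivity has positive measure near any other ball \(B'\Subset\Omega\), so Step 2 applies to \(B'\). Covering \(K\) by finitely many balls yields \(\operatorname{ess\,inf}_K v>0\). The continuous-representative statement follows immediately.

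\emph{Main obstacle.} I expect the main difficulty to be Step 2: justifying the test function in the weak formulation with only \(v\in\mathcal D^s(\Omega)\), and handling the negative part of \(c\) (write \(c=c^+-c^-\) and absorb \(c^-\) using smallness of \(\epsilon\) and \(\|c\|_\infty\)). Rather than redo this, I would simply invoke \cite[Proposition 3.3]{FallJarohs} after the class verification above.
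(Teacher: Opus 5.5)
Your proposal is essentially the paper's proof: after checking that $\mathcal D^s(\Omega)$ fits the Fall--Jarohs framework, the paper fixes $x_0\in\Omega$, shrinks $r$ until the positive set of $v$ in $H$ still has positive measure outside $\overline{B_r(x_0)}$, applies \cite[Proposition 3.3]{FallJarohs} on $B_r(x_0)$, and covers $K$ by finitely many balls, so the chaining and connectedness in your Step 3 are not needed. The one point your "simply invoke Proposition 3.3" leaves out is that the paper also shrinks $r$ so that $\lambda_1(B_r(x_0))>\|c\|_{L^\infty(\Omega)}$ before invoking that proposition; your own barrier sketch does not need this condition, since on the support of $(\epsilon\psi-v)^+\one_H$ one has $0\le v<\epsilon\psi$, hence $|cv|\le\|c\|_{L^\infty(\Omega)}\epsilon\psi$ there.
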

	
	\begin{proof}
		The preceding pairing estimate verifies the function-space hypotheses of Corollary~3.4 of Fall and Jarohs \cite{FallJarohs}. Its vanishing alternative holds in $H$; antisymmetry then gives vanishing almost everywhere in $\R^N$.
		
		We detail the compact-set lower bound. Suppose that $v$ is not zero almost everywhere in $H$, and fix $x_0\in\Omega$. Since $|\{x\in H:v(x)>0\}|>0$, we can choose $r>0$ so small that $B_r(x_0)\Subset\Omega$, $\lambda_1(B_r(x_0))>\|c\|_{L^\infty(\Omega)}$, and the positive set of $v$ outside $\overline{B_r(x_0)}$ has positive measure. Here $\lambda_1$ denotes the first Dirichlet eigenvalue of $(-\Delta)^s$. Hence there are $\delta>0$ and a compact set $A\Subset H\setminus\overline{B_r(x_0)}$ of positive measure such that $v\ge\delta$ almost everywhere on $A$. Proposition~3.3 of \cite{FallJarohs}, applied on $B_r(x_0)$, gives
		\[
		v(x)\ge d\,\dist(x,\R^N\setminus B_r(x_0))^s \quad\text{for a.e. }x\in B_r(x_0)
		\]
		for some $d>0$. In particular, $v$ has a positive essential lower bound on $B_{r/2}(x_0)$. A finite covering proves the stated bound on every compact subset of $\Omega$. Continuity gives the pointwise conclusion.
	\end{proof}
	
	\begin{samepage}
		\begin{proposition}\label{Prop5.1}
			Let \(\lambda<0\) and suppose that
			\[
			u\le u_\lambda\quad\text{in }D_\lambda.
			\]
			Then exactly one of the following alternatives holds:
			\[
			w_\lambda\equiv0\quad\text{a.e. in }\R^N,
			\]
			or
			\[
			u(x)<u_\lambda(x) \quad\text{for every }x\in D_\lambda.
			\]
			In the first alternative, continuity gives \(u\equiv u_\lambda\) pointwise in \(D_\lambda\). The analogous alternative holds for \(\lambda>0\) in
			\[
			D_\lambda^+=\{x_1>\lambda\}\setminus(\Gamma\cup\Gamma_\lambda).
			\]
		\end{proposition}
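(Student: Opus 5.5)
We apply Lemma~\ref{Lem5.1} to \(v=u_\lambda-u=-w_\lambda\), which is antisymmetric with respect to \(T_\lambda\). We take the half-space \(H=\Sigma_\lambda\), and for \(\Omega\) we use connected bounded open sets \(\Omega\Subset D_\lambda\). The hypothesis \(u\le u_\lambda\) in \(D_\lambda\) gives \(v\ge0\) a.e. in \(H\), because \(\Gamma\cup\Gamma_\lambda\) is null by Lemma~\ref{Lem2.2}.

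\textbf{Supersolution inequality.} For nonnegative \(\varphi\in H_0^s(\Omega)\), Lemma~\ref{Lem3.2} (suitably adapted: its proof works for any \(H^s\) test supported in \(\overline\Omega\)) gives
\[
\E(v,\varphi)=\int_\Omega\bigl(V_\lambda u_\lambda^q-Vu^q\bigr)\varphi\dd x .
\]
We split the integrand as
\[
V_\lambda u_\lambda^q-Vu^q = V_\lambda\bigl(u_\lambda^q-u^q\bigr)+\bigl(V_\lambda-V\bigr)u^q .
\]
\begin{enumerate}[label=\textup{(\alph*)}]
\item Since \(u\le u_\lambda\) on \(D_\lambda\), Lemmas~\ref{Lem4.1} and \ref{Lem4.2} give
\[
V_\lambda(x)-V(x)=\int_{\Sigma_\lambda}\bigl(u_\lambda^p-u^p\bigr)K_\lambda(x,y)\dd y\ge0 .
\]
So the second term is nonnegative and may be dropped.
\item In the first term, \(u_\lambda^q-u^q=c_1 v\) with \(c_1=(u_\lambda^q-u^q)/(u_\lambda-u)\ge0\). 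On \(\overline\Omega\) both \(u\) and \(u_\lambda\) are bounded by Proposition~\ref{Prop2.2}, so \(c_1\) is bounded. Also \(V_\lambda\in L^\infty\) by Lemma~\ref{Lem2.7}.
\end{enumerate}
Hence \(v\) is a supersolution of \((-\Delta)^sv=cv\) in \(\Omega\) with \(c=V_\lambda c_1\in L^\infty(\Omega)\). The regularity requirement \(v\in\mathcal D^s(\Omega)\) follows from \(u\in H^s_{\loc}(\R^N\setminus\Gamma)\), its reflected version, and the tail class \(\Ls\).

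\textbf{Dichotomy.} Lemma~\ref{Lem5.1} then yields, for each such \(\Omega\), either \(v\equiv0\) a.e. in \(\R^N\), or \(v>0\) everywhere in \(\Omega\) by continuity. If \(v\not\equiv0\), we need \(v>0\) at every \(x\in D_\lambda\).

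\textbf{Main obstacle: connectedness of \(D_\lambda\).} The point is that \(D_\lambda\) may fail to be connected. Since \(\Gamma_\lambda\) has zero capacity it cannot separate, but we avoid topology as follows. Given \(x\in D_\lambda\), we choose a small ball \(\Omega=B_r(x)\Subset D_\lambda\), which is connected. Lemma~\ref{Lem5.1} only needs \(\Omega\subset H\) and uses the global positivity set of \(v\) in \(H\), as in its proof via Proposition~3.3 of \cite{FallJarohs}. It therefore gives \(v>0\) on \(B_r(x)\) whenever \(v\not\equiv0\) a.e. This yields \(u<u_\lambda\) at every point of \(D_\lambda\), with no global connectedness needed.

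\textbf{Exclusivity and the case \(\lambda>0\).} The two alternatives are mutually exclusive, since \(D_\lambda\) has positive measure. In the first alternative, continuity gives \(u\equiv u_\lambda\) pointwise in \(D_\lambda\). For \(\lambda>0\) the argument is identical in \(\{x_1>\lambda\}\), with the roles of the half-spaces exchanged and the mirror of Lemma~\ref{Lem4.2}.
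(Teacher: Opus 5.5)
Your proposal is correct and follows the paper's proof. You apply Lemma~\ref{Lem5.1} to $u_\lambda-u$ on small balls $B\Subset D_\lambda$, get $V_\lambda\ge V$ from Lemmas~\ref{Lem4.1} and \ref{Lem4.2}, and cover $D_\lambda$ by such balls. The only difference is cosmetic: you linearize the source with a bounded coefficient $c=V_\lambda c_1\ge0$, whereas the paper notes that $V_\lambda u_\lambda^q-Vu^q\ge0$ outright and takes $c=0$, which your argument also yields since $cv\ge0$.
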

	\end{samepage}
	
	\begin{proof}
		The function $-w_\lambda=u_\lambda-u$ is antisymmetric with respect to $T_\lambda$ and is nonnegative almost everywhere in $\Sigma_\lambda$. Fix a ball $B\Subset D_\lambda$. Local $H^s$ regularity and the $\Ls$ condition imply $-w_\lambda\in\mathcal D^s(B)$. By Lemmas~\ref{Lem4.1} and \ref{Lem4.2},
		\[
		V_\lambda(x)-V(x) =\int_{\Sigma_\lambda}(u_\lambda^p(y)-u^p(y)) \left(\frac1{|x-y|^\mu}-\frac1{|x^\lambda-y|^\mu}\right)\dd y \ge0\qquad(x\in B).
		\]
		Since $u_\lambda\ge u$ on the comparison half-space,
		\[
		\E(-w_\lambda,\varphi) =\int_B(V_\lambda u_\lambda^q-Vu^q)\varphi\dd x\ge0
		\]
		for nonnegative $\varphi\in C_c^\infty(B)$. The local testing argument in Lemma~\ref{Lem3.2} extends this inequality to every nonnegative $\varphi\in H_0^s(B)$. Lemma~\ref{Lem5.1}, with $c=0$, implies either $w_\lambda=0$ almost everywhere in $\R^N$ or $-w_\lambda>0$ in $B$. If the global identity does not hold, choose such a ball about each point of $D_\lambda$ and use continuity. This proves strict comparison throughout $D_\lambda$. Reflection of the first coordinate gives the right half-space assertion.
	\end{proof}
	
	\begin{lemma}\label{Lem5.2}
		There exists \(M>0\) such that for every \(\lambda<-M\),
		\begin{equation}\label{eq5.1}
			u(x)\le u_\lambda(x) \quad\text{for every }x\in D_\lambda.
		\end{equation}
	\end{lemma}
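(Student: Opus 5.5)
The plan is to deduce the statement directly from Corollary~\ref{Cor4.2}, or equivalently Corollary~\ref{Cor4.1}, together with Lemma~\ref{Lem2.6}(ii). For $\lambda<0$ the positivity set satisfies $E_\lambda\subset D_\lambda\subset\Sigma_\lambda=\{x_1<\lambda\}$. Since $\mathcal M$ is monotone with respect to set inclusion, because both terms in \eqref{eq2.3} are increasing in $\|u\|_{L^{m_H}(E)}$, we obtain
\[
\mathcal M(E_\lambda)\le\mathcal M(\{x_1<\lambda\}).
\]

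By Lemma~\ref{Lem2.6}(ii), the right-hand side tends to zero as $\lambda\to-\infty$. The constant $C$ of Proposition~\ref{Prop4.1} is independent of $\lambda$ and of $E_\lambda$, so we may choose $M>0$ such that $C\,\mathcal M(\{x_1<\lambda\})<1$ for every $\lambda<-M$. Equivalently, $\|u\|_{L^{m_H}(\{x_1<\lambda\})}<\tau_*$ for all such $\lambda$. This uses only absolute continuity of $\int u^{m_H}$, because the half-spaces decrease to the empty set.

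For every $\lambda<-M$, Corollary~\ref{Cor4.1} then gives $W_\lambda\equiv0$, that is, $u\le u_\lambda$ a.e. in $\Sigma_\lambda$. Continuity of the representative from Proposition~\ref{Prop2.2} upgrades this to pointwise comparison on $D_\lambda$, which is \eqref{eq5.1}. Both $u$ and $u_\lambda$ are continuous on $D_\lambda$ because $D_\lambda$ avoids $\Gamma\cup\Gamma_\lambda$.

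The main obstacle is not in this step itself. It is the validity of Proposition~\ref{Prop4.1} for every $\lambda<0$, which is where the reflected singular set $\Gamma_\lambda\subset\Sigma_\lambda$ and the cutoff removal of Lemma~\ref{Lem3.4} enter. Since that proposition is already available with a $\lambda$-independent constant, I expect the only point needing care is confirming that its hypotheses hold uniformly in $\lambda$. Those hypotheses are $W_\lambda\in L^{2_s^*}$ and boundedness of $W_\lambda$ near $\Gamma_\lambda$, and both follow from Lemma~\ref{Lem2.7}: $\Sigma_\lambda$ stays at distance $|\lambda|$ from $\Gamma$, and $0\le W_\lambda\le u$ there. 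No condition on the size of $\lambda$ beyond $\lambda<0$ is required for these facts, so the argument closes.
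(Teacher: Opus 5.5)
Your proposal is correct and follows the paper's proof essentially verbatim. The paper likewise uses the decay of $\|u\|_{L^{m_H}(\Sigma_\lambda)}$ as $\lambda\to-\infty$ from Lemma~\ref{Lem2.6}(ii), the inclusion $E_\lambda\subset\Sigma_\lambda$, and the $\lambda$-independent threshold $\tau_*$ of Corollary~\ref{Cor4.2} to get $W_\lambda\equiv0$, hence $u\le u_\lambda$ in $D_\lambda$. Your route through monotonicity of $\mathcal M$ and Corollary~\ref{Cor4.1} is just the equivalent form $C\mathcal M(E_\lambda)<1$ of the same criterion.
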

	
	\begin{proof}
		By the tail convergence used in Lemma~\ref{Lem2.6}(ii),
		\[
		\|u\|_{L^{m_H}(\Sigma_\lambda)}\to0 \quad\text{as }\lambda\to-\infty.
		\]
		Choose \(M>0\) so large that
		\[
		\|u\|_{L^{m_H}(\Sigma_\lambda)}<\tau_* \quad\text{for every }\lambda<-M,
		\]
		where \(\tau_*\) is defined in Corollary~\ref{Cor4.2}. Since \(E_\lambda\subset\Sigma_\lambda\), Corollary~\ref{Cor4.2} yields
		\[
		u\le u_\lambda \quad\text{in }D_\lambda.
		\]
		This proves \eqref{eq5.1}.
	\end{proof}
	
	Define
	\[
	\Lambda_-= \left\{ \lambda<0: u\le u_t\text{ in }D_t \text{ for every }t\le\lambda \right\}, \qquad \lambda_0=\sup\Lambda_-.
	\]
	By Lemma~\ref{Lem5.2}, \(\Lambda_-\ne\emptyset\). The equality \(\lambda_0=0\) will imply that every \(\lambda<0\) belongs to \(\Lambda_-\).
	
	\begin{lemma}\label{Lem5.3}
		We have
		\begin{equation}\label{eq5.2}
			u(x)\le u_{\lambda_0}(x) \quad\text{for every }x\in D_{\lambda_0}.
		\end{equation}
	\end{lemma}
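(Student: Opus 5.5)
This is a continuity argument at the supremum: the comparison $u\le u_t$ holds for every $t<\lambda_0$ and should pass to the limit $t\uparrow\lambda_0$. First, by Lemma~\ref{Lem5.2}, $\lambda_0\ge -M>-\infty$, and $\lambda_0\le0$ by definition. If $\lambda_0$ itself belongs to $\Lambda_-$ there is nothing to prove. Otherwise we choose $t_n\in\Lambda_-$ with $t_n\uparrow\lambda_0$. Every $t<\lambda_0$ lies below some $t_n$, so the comparison $u\le u_t$ in $D_t$ holds for all $t<\lambda_0$.

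Fix $x\in D_{\lambda_0}$, so that $x_1<\lambda_0$, $x\notin\Gamma$, and $x^{\lambda_0}\notin\Gamma$. For $t_n$ close to $\lambda_0$ we have $x_1<t_n$, since the inequality $x_1<\lambda_0$ is strict. We also need $x^{t_n}=(2t_n-x_1,x')\notin\Gamma$. Because $\Gamma$ is compact and $x^{\lambda_0}\notin\Gamma$, a small ball around $x^{\lambda_0}$ misses $\Gamma$, and $x^{t_n}\to x^{\lambda_0}$. Hence $x\in D_{t_n}$ for all large $n$, and
\[
u(x)\le u(x^{t_n}).
\]
By Proposition~\ref{Prop2.2}, $u$ is continuous on $\R^N\setminus\Gamma$, which is a neighborhood of $x^{\lambda_0}$. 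Letting $n\to\infty$ gives $u(x)\le u(x^{\lambda_0})=u_{\lambda_0}(x)$. This proves \eqref{eq5.2}.

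The only delicate point is that the limiting reflected point must avoid the singular set, so that continuity can be used. This is exactly guaranteed by the definition of $D_{\lambda_0}$, which excludes both $\Gamma$ and $\Gamma_{\lambda_0}$, together with the compactness of $\Gamma$. Nothing beyond pointwise continuity is needed, and no energy estimate enters at this stage.
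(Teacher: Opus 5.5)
Your proof is correct and is essentially the paper's argument. You approximate $\lambda_0$ from below by planes $t_n\in\Lambda_-$, check that $x\in D_{t_n}$ for large $n$, and pass to the limit using continuity of $u$ at the regular point $x^{\lambda_0}$. You phrase the middle step as $x^{t_n}$ staying in a ball around $x^{\lambda_0}$ that misses $\Gamma$; the paper phrases it as Hausdorff convergence $\Gamma_{t_n}\to\Gamma_{\lambda_0}$, which is the same fact.
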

	
	\begin{proof}
		Let \(x\in D_{\lambda_0}\). Since \(x_1<\lambda_0\), we may choose \(\lambda_n\in\Lambda_-\) with \(\lambda_n\uparrow\lambda_0\) and \(x_1<\lambda_n\) for every large \(n\). Moreover, \(x\notin\Gamma_{\lambda_0}\), so
		\[
		d:=\dist(x,\Gamma_{\lambda_0})>0.
		\]
		Because
		\[
		\Gamma_{\lambda_n}=\{(2\lambda_n,\xi'): (0,\xi')\in\Gamma\}
		\]
		converges to \(\Gamma_{\lambda_0}\) in the Hausdorff metric, we have \(x\notin\Gamma_{\lambda_n}\) for all large \(n\). Hence \(x\in D_{\lambda_n}\) for all large \(n\), and therefore
		\[
		u(x)\le u_{\lambda_n}(x)=u(x^{\lambda_n}).
		\]
		Now
		\[
		x^{\lambda_n}\to x^{\lambda_0} \quad\text{as }n\to\infty,
		\]
		and
		\[
		\dist(x^{\lambda_0},\Gamma)=\dist(x,\Gamma_{\lambda_0})=d>0.
		\]
		Thus the points \(x^{\lambda_n}\) stay in a compact subset of \(\R^N\setminus\Gamma\) for all large \(n\), and Proposition~\ref{Prop2.2} yields the continuity of \(u\) there. Passing to the limit gives
		\[
		u(x)\le u(x^{\lambda_0})=u_{\lambda_0}(x).
		\]
		This proves \eqref{eq5.2}.
	\end{proof}
	
	\begin{lemma}\label{Lem5.4}
		Let \(x_0\in\Gamma\) be a nonremovable singular point. Then
		\[
		u\not\equiv u_\lambda \quad\text{in }D_\lambda \qquad\text{for every }\lambda<0,
		\]
		and
		\[
		u\not\equiv u_\lambda \quad\text{in }D_\lambda^+ \qquad\text{for every }\lambda>0.
		\]
	\end{lemma}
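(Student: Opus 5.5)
The plan is to argue by contradiction, using only that $u$ is continuous and locally bounded away from $\Gamma$ (Proposition~\ref{Prop2.2}) while it blows up at $x_0$. Suppose that for some $\lambda<0$ we have $u\equiv u_\lambda$ in $D_\lambda$. Since $\Gamma\subset\{x_1=0\}$ and $\lambda<0$, the reflected point $x_0^\lambda=(2\lambda,x_0')$ lies in $\Sigma_\lambda$, at distance $2|\lambda|>0$ from the hyperplane containing $\Gamma$. Hence $\dist(x_0^\lambda,\Gamma)\ge 2|\lambda|$.

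Fix $r$ with $0<r<|\lambda|$. Then $B_r(x_0^\lambda)\subset\Sigma_\lambda$, and its closure is a compact subset of $\R^N\setminus\Gamma$. By Proposition~\ref{Prop2.2}, or directly by Lemma~\ref{Lem2.7} with $\delta=|\lambda|$, $u$ is bounded on $\overline{B_r(x_0^\lambda)}$; call the bound $M_\lambda$.

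Now take any $y\in B_r(x_0)\setminus(\Gamma\cup\Gamma_\lambda)$. Two cases arise.
\begin{itemize}
\item If $y_1<\lambda$, then $y\in D_\lambda$ would require $|y-x_0|\ge|\lambda|>r$, which is impossible. So points of $B_r(x_0)$ are not in $\Sigma_\lambda$ at all.
\item Otherwise $y$ lies on the right side of $T_\lambda$, and $y^\lambda\in B_r(x_0^\lambda)\subset\Sigma_\lambda$. Since $y^\lambda\notin\Gamma\cup\Gamma_\lambda$ (the latter because $y\notin\Gamma$), we have $y^\lambda\in D_\lambda$. The identity $u=u_\lambda$ at $y^\lambda$ then gives $u(y^\lambda)=u(y)$.
\end{itemize}
Hence $u(y)=u(y^\lambda)\le M_\lambda$ for every $y\in B_r(x_0)\setminus(\Gamma\cup\Gamma_\lambda)$.

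It remains to handle the exceptional set. Since $\Gamma_\lambda\cap B_r(x_0)=\emptyset$ and $|\Gamma|=0$ by Lemma~\ref{Lem2.2}, the bound gives $\operatorname*{ess\,sup}_{B_\rho(x_0)\setminus\Gamma}u\le M_\lambda$ for all $\rho<r$. This contradicts the nonremovability condition of Definition~\ref{Def1.3}.

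The case $\lambda>0$ with $D_\lambda^+$ is identical after reflecting the first coordinate. Now $x_0^\lambda=(2\lambda,x_0')\in\{x_1>\lambda\}$ lies at distance $2\lambda$ from $\{x_1=0\}$, and the same ball argument applies.

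I expect no genuine obstacle. The only point needing care is the bookkeeping ensuring that reflected points avoid both $\Gamma$ and $\Gamma_\lambda$, which holds because $\Gamma_\lambda$ sits on $\{x_1=2\lambda\}$, far from $B_r(x_0)$. One should also note that the hypothesis identity $u\equiv u_\lambda$ in $D_\lambda$ is pointwise for the continuous representative, so it is valid at $y^\lambda$.
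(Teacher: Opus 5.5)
Your proof is correct and follows the paper's: the identity $u\equiv u_\lambda$ would carry the blow-up at $x_0$ to a neighborhood of the regular point $x_0^\lambda=(2\lambda,x_0')$, where the continuous representative is bounded. The only difference is that you work with a ball and the essential supremum in Definition~\ref{Def1.3} directly, whereas the paper first extracts a sequence $x_n\to x_0$ with $u(x_n)\to+\infty$.
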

	
	\begin{proof}
		By the definition of a nonremovable singular point and the local H\"older continuity of \(u\), there exists a sequence \(x_n\in\R^N\setminus\Gamma\) such that
		\[
		x_n\to x_0, \qquad u(x_n)\to+\infty.
		\]
		Fix first \(\lambda<0\), and set \(y_n=x_n^\lambda\). Then
		\[
		y_n\to x_0^\lambda, \qquad (x_0^\lambda)_1=2\lambda\ne0,
		\]
		so \(x_0^\lambda\notin\Gamma\). Moreover, \(y_{n,1}\to2\lambda<\lambda\), and \(y_n\notin\Gamma_\lambda\), since otherwise \(x_n=y_n^\lambda\in\Gamma\). Hence
		\[
		y_n\in D_\lambda\quad\text{for all large }n.
		\]
		If \(u\equiv u_\lambda\) in \(D_\lambda\), then
		\[
		u(y_n)=u_\lambda(y_n)=u(x_n)\to+\infty,
		\]
		contradicting the local boundedness of the representative near the regular point \(x_0^\lambda\).
		
		For \(\lambda>0\), the same construction gives \(y_{n,1}\to2\lambda>\lambda\), so \(y_n\in D_\lambda^+\) for all large \(n\). The identical contradiction proves the right-hand assertion.
	\end{proof}
	
	\begin{lemma}\label{Lem5.5}
		For every \(\lambda<0\) for which
		\[
		u\le u_\lambda\quad\text{in }D_\lambda,
		\]
		one has
		\[
		u<u_\lambda\quad\text{in }D_\lambda.
		\]
		For every \(\lambda>0\) for which
		\[
		u\le u_\lambda\quad\text{in }D_\lambda^+,
		\]
		one has
		\[
		u<u_\lambda\quad\text{in }D_\lambda^+.
		\]
	\end{lemma}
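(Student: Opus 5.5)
This is a direct combination of Proposition~\ref{Prop5.1} and Lemma~\ref{Lem5.4}. Fix \(\lambda<0\) with \(u\le u_\lambda\) in \(D_\lambda\).

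The first step is to apply Proposition~\ref{Prop5.1}. It yields exactly one of two alternatives: either \(w_\lambda\equiv0\) a.e. in \(\R^N\), in which case continuity gives \(u\equiv u_\lambda\) pointwise in \(D_\lambda\), or \(u<u_\lambda\) at every point of \(D_\lambda\).

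The second step is to rule out the first alternative. By hypothesis, \(u\) has a nonremovable singular point \(x_0\in\Gamma\), so Lemma~\ref{Lem5.4} gives \(u\not\equiv u_\lambda\) in \(D_\lambda\). Only the strict alternative remains.

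The case \(\lambda>0\) is handled in the same way. Use the right half-space version of Proposition~\ref{Prop5.1} in \(D_\lambda^+\), where \(w_\lambda\) is antisymmetric and \(u_\lambda-u\ge0\) on \(\{x_1>\lambda\}\). Then exclude the identity alternative with the second assertion of Lemma~\ref{Lem5.4}.

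The only point to check is that the identity alternative \(w_\lambda\equiv0\) a.e. in \(\R^N\) really contradicts Lemma~\ref{Lem5.4}. Lemma~\ref{Lem5.4} is phrased pointwise on \(D_\lambda\), so we must pass from the a.e. statement to a pointwise one. This works because \(u\) and \(u_\lambda\) are continuous on the open set \(D_\lambda\) by Proposition~\ref{Prop2.2}, and \(D_\lambda\) has full measure in \(\Sigma_\lambda\) by Lemmas~\ref{Lem2.2} and \ref{Lem2.3}. Hence a.e. equality on \(D_\lambda\) upgrades to everywhere equality on \(D_\lambda\), contradicting Lemma~\ref{Lem5.4}.

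I expect no real obstacle. All the analytic work, namely the strong maximum principle of Lemma~\ref{Lem5.1} applied to the antisymmetric supersolution \(-w_\lambda\), was already carried out in Proposition~\ref{Prop5.1}.
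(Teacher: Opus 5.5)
Your proposal is correct and follows the paper's proof: Proposition~\ref{Prop5.1} gives the dichotomy between the global identity and the strict inequality on either side, and Lemma~\ref{Lem5.4} excludes the identity for every \(\lambda\ne0\). Your extra check that a.e.\ equality upgrades to pointwise equality on \(D_\lambda\) is valid but redundant, since Proposition~\ref{Prop5.1} already states that continuity gives \(u\equiv u_\lambda\) pointwise in the first alternative.
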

	
	\begin{proof}
		Proposition~\ref{Prop5.1} gives a dichotomy on either side: global identity or strict inequality. Lemma~\ref{Lem5.4} excludes the identity whenever \(\lambda\ne0\). Hence strict inequality holds.
	\end{proof}
	
	\begin{lemma}\label{Lem5.6}
		Assume \(\lambda_0<0\). There exist \(\delta\in(0,-\lambda_0/2)\), \(R>1\), \(\rho>0\), and \(\eta>0\) such that for all \(\lambda\in(\lambda_0,\lambda_0+\eta)\),
		\begin{equation}\label{eq5.3}
			E_\lambda \subset (\mathcal S_\delta\cap B_R) \cup (U_\rho(\Gamma_{\lambda_0})\cap B_R) \cup (\R^N\setminus B_R),
		\end{equation}
		where
		\[
		\mathcal S_\delta=\{x\in\R^N:|x_1-\lambda_0|<\delta\}, \qquad U_\rho(\Gamma_{\lambda_0})=\{x:\dist(x,\Gamma_{\lambda_0})<\rho\}.
		\]
		Moreover,
		\begin{equation}\label{eq5.4}
			\mathcal M(E_\lambda)<\frac1C,
		\end{equation}
		where \(C\) is the constant in Proposition~\ref{Prop4.1}.
	\end{lemma}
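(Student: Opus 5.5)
Fix the thresholds first and use compactness plus strict comparison second. By Lemma~\ref{Lem2.6}, I choose $\rho_0>0$ so that $|A|\le\rho_0$ with $A\subset B_{R}$ gives $C\mathcal M(A)<1/(4C)$ type bounds. Since $\mathcal M$ is not additive, I will work with the norm $\|u\|_{L^{m_H}(\cdot)}$ and $\Theta$ from Corollary~\ref{Cor4.2}. It suffices to force
\[
\|u\|_{L^{m_H}(E_\lambda)}<\tau_*,
\]
and for this I make each of the three pieces contribute less than $\tau_*/3$ in $L^{m_H}$ norm (in the $m_H$-th power sense, $\|u\|_{L^{m_H}(A\cup B)}^{m_H}\le\|u\|_{L^{m_H}(A)}^{m_H}+\|u\|_{L^{m_H}(B)}^{m_H}$). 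Concretely, by Lemma~\ref{Lem2.6}(iv) I pick $R>1$ with $\|u\|_{L^{m_H}(\R^N\setminus B_R)}$ small. Then, by absolute continuity (Lemma~\ref{Lem2.6}(i)), I pick $\delta\in(0,-\lambda_0/2)$ and $\rho>0$ so that $|\mathcal S_{2\delta}\cap B_R|$ and $|U_{2\rho}(\Gamma_{\lambda_0})\cap B_R|$ are small enough that the $L^{m_H}$ norms of $u$ on them are small. Here $\Gamma_{\lambda_0}$ is compact with measure zero, so the neighborhoods shrink to a null set. I also take $\rho$ small so that $U_{2\rho}(\Gamma_{\lambda_0})\Subset\{x_1<\lambda_0-\delta\}$, which is possible since $\Gamma_{\lambda_0}\subset\{x_1=2\lambda_0\}$ and $\delta<-\lambda_0/2$. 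With these choices, \eqref{eq5.4} follows from \eqref{eq5.3}.

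The core step is the inclusion \eqref{eq5.3}. Consider the compact set
\[
K=\{x\in\overline{B_R}: x_1\le\lambda_0-\delta,\ \dist(x,\Gamma_{\lambda_0})\ge\rho\}.
\]
It lies in $D_{\lambda_0}$, since it avoids $\Gamma$ (as $x_1<0$) and avoids $\Gamma_{\lambda_0}$. By Lemmas~\ref{Lem5.3} and \ref{Lem5.5}, $u<u_{\lambda_0}$ on $K$. Moreover $K$ and $R_{\lambda_0}K$ are compact subsets of $\R^N\setminus\Gamma$: we have $\dist(R_{\lambda_0}K,\Gamma)=\dist(K,\Gamma_{\lambda_0})\ge\rho$. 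Hence $u$ is uniformly continuous on a fixed compact neighborhood of $K\cup R_{\lambda_0}K$, and $\min_K(u_{\lambda_0}-u)=:m>0$. For $|\lambda-\lambda_0|<\eta$ small, $x^\lambda$ stays within $2\eta$ of $x^{\lambda_0}$, so $|u_\lambda-u_{\lambda_0}|<m$ on $K$, giving $u<u_\lambda$ on $K$. The remaining points of $D_\lambda\cap B_R$ satisfy either $x_1\in(\lambda_0-\delta,\lambda)\subset\mathcal S_\delta$ (taking $\eta<\delta$), or $\dist(x,\Gamma_{\lambda_0})<\rho$. Points outside $B_R$ land in the third piece. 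So $E_\lambda\subset K^c\cap\Sigma_\lambda$ yields \eqref{eq5.3}. Note also that $\Gamma_\lambda\subset U_\rho(\Gamma_{\lambda_0})$ for $\eta<\rho/2$, so the excluded singular points are harmless.

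The main obstacle is the uniformity near the reflected singular set $\Gamma_{\lambda_0}$. There $u_\lambda$ blows up, while continuity of $u$ near $R_\lambda x$ fails uniformly, because $R_\lambda x$ approaches $\Gamma$. This is exactly why that region is absorbed via small measure rather than compared pointwise. The other delicate point is ordering the choices correctly: $R$ first, then $\delta,\rho$ (depending on $R$), then $\eta$ (depending on $K$). Reversing this order would break the smallness of $\mathcal M$.
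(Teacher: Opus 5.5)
Your proposal is correct and follows essentially the paper's argument. You choose $R$, then $\delta,\rho$ by absolute continuity of $\int u^{m_H}$, use strict comparison at $\lambda_0$ (Lemmas~\ref{Lem5.3}, \ref{Lem5.5}) together with uniform continuity of $u$ near $R_{\lambda_0}K$, which stays at distance at least $\rho$ from $\Gamma$, to get $E_\lambda\cap K=\emptyset$, and conclude through $\Theta$ and $\tau_*$ from Corollary~\ref{Cor4.2}. The extra margins ($2\delta$, $2\rho$, thirds instead of halves), the condition $U_{2\rho}(\Gamma_{\lambda_0})\Subset\{x_1<\lambda_0-\delta\}$, and the garbled opening sentence about $\rho_0$ (which you then abandon) are harmless.
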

	
	\begin{proof}
		Fix \(0<\varepsilon<\tau_*/2\), where \(\tau_*\) is the uniform threshold from Corollary~\ref{Cor4.2}. By Lemma~\ref{Lem5.3} and the left-hand conclusion of Lemma \ref{Lem5.5},
		\[
		u<u_{\lambda_0}\quad\text{in }D_{\lambda_0}.
		\]
		By Lemma~\ref{Lem2.6}(iv), choose \(R>1\) so large that
		\begin{equation}\label{eq5.5}
			\|u\|_{L^{m_H}(\R^N\setminus B_R)}<\varepsilon.
		\end{equation}
		Since \(\Gamma_{\lambda_0}\) has zero measure by Lemma~\ref{Lem2.2},
		\[
		|U_\rho(\Gamma_{\lambda_0})\cap B_R|\to0 \quad\text{as }\rho\downarrow0,
		\]
		and
		\[
		|\{x\in B_R: |x_1-\lambda_0|<\delta\}|\to0 \quad\text{as }\delta\downarrow0.
		\]
		Thus, by the absolute continuity of the Lebesgue integral, we may choose \(\delta\in(0,-\lambda_0/2)\) and \(\rho>0\) so small that
		\[
		G_{\delta,\rho,R}:= \{x\in B_R: |x_1-\lambda_0|<\delta\} \cup(U_\rho(\Gamma_{\lambda_0})\cap B_R)
		\]
		satisfies
		\begin{equation}\label{eq5.6}
			\|u\|_{L^{m_H}(G_{\delta,\rho,R})}<\varepsilon.
		\end{equation}
		
		We now prove the localization of \(E_\lambda\). Define the compact set
		\[
		K_{\delta,\rho,R}:= \overline{B_R}\cap\{x_1\le \lambda_0-\delta\}\setminus U_\rho(\Gamma_{\lambda_0}).
		\]
		If \(K_{\delta,\rho,R}\ne\emptyset\), then \(K_{\delta,\rho,R}\Subset D_{\lambda_0}\), and Lemma~\ref{Lem5.5} and continuity give
		\[
		m:= -\max_{K_{\delta,\rho,R}} w_{\lambda_0}>0.
		\]
		Consider the reflected compact set
		\[
		K_{\delta,\rho,R}^{\lambda_0}:=\{x^{\lambda_0}:x\in K_{\delta,\rho,R}\}.
		\]
		Because reflection is an isometry and \(K_{\delta,\rho,R}\cap U_\rho(\Gamma_{\lambda_0})=\emptyset\), one has
		\[
		\dist\bigl(K_{\delta,\rho,R}^{\lambda_0},\Gamma\bigr)=\dist\bigl(K_{\delta,\rho,R},\Gamma_{\lambda_0}\bigr)\ge \rho.
		\]
		Hence there exists \(\eta_0>0\) such that the set
		\[
		\mathcal K:=\{x^\lambda:x\in K_{\delta,\rho,R},\ |\lambda-\lambda_0|\le \eta_0\}
		\]
		is compactly contained in \(\R^N\setminus\Gamma\). Since \(u\) is continuous on \(\mathcal K\), it is uniformly continuous there. Therefore
		\[
		\sup_{x\in K_{\delta,\rho,R}}|u(x^\lambda)-u(x^{\lambda_0})|\to0 \quad\text{as }\lambda\to\lambda_0,
		\]
		and consequently
		\[
		\sup_{x\in K_{\delta,\rho,R}}|w_\lambda(x)-w_{\lambda_0}(x)|\to0 \quad\text{as }\lambda\to\lambda_0.
		\]
		Choose \(\eta\in(0,\eta_0)\) so small that \(\eta<\delta/2\) and
		\[
		\sup_{x\in K_{\delta,\rho,R}}|w_\lambda(x)-w_{\lambda_0}(x)|<\frac m2 \quad\text{for every }\lambda\in(\lambda_0,\lambda_0+\eta).
		\]
		Then, for such \(\lambda\),
		\[
		w_\lambda(x)\le -\frac m2<0 \quad\text{for every }x\in K_{\delta,\rho,R}.
		\]
		Hence
		\[
		E_\lambda\cap K_{\delta,\rho,R}=\emptyset.
		\]
		If \(K_{\delta,\rho,R}=\emptyset\), we choose any \(\eta\in(0,\delta/2)\), and the same conclusion is trivial. Now let \(x\in E_\lambda\cap B_R\) with \(x\notin U_\rho(\Gamma_{\lambda_0})\). Since \(x\notin K_{\delta,\rho,R}\), it follows that \(x_1>\lambda_0-\delta\). On the other hand,
		\[
		x_1<\lambda<\lambda_0+\eta<\lambda_0+\delta.
		\]
		Therefore
		\[
		|x_1-\lambda_0|<\delta,
		\]
		which means that \(x\in \mathcal S_\delta\cap B_R\). This proves \eqref{eq5.3}.
		
		For \(\lambda\in(\lambda_0,\lambda_0+\eta)\), let
		\[
		F_{\delta,\rho,R}:=G_{\delta,\rho,R}\cup(\R^N\setminus B_R).
		\]
		By \eqref{eq5.3}, \(E_\lambda\subset F_{\delta,\rho,R}\). Combining \eqref{eq5.5} and \eqref{eq5.6}, we obtain
		\[
		\|u\|_{L^{m_H}(F_{\delta,\rho,R})}<2\varepsilon.
		\]
		Therefore, by monotonicity of the \(L^{m_H}\)-norm,
		\[
		\|u\|_{L^{m_H}(E_\lambda)} \le \|u\|_{L^{m_H}(F_{\delta,\rho,R})} <2\varepsilon<\tau_*.
		\]
		Corollary~\ref{Cor4.2}, or the identity \(\mathcal M(E_\lambda)=\Theta(\|u\|_{L^{m_H}(E_\lambda)})\), then gives \(\mathcal M(E_\lambda)<1/C\). This proves \eqref{eq5.4}.
	\end{proof}
	
	\begin{proposition}\label{Prop5.2}
		One has
		\begin{equation*}
			\lambda_0=0.
		\end{equation*}
		Consequently,
		\begin{equation}\label{eq5.7}
			u(x)\le u_\lambda(x) \quad\text{for every }\lambda<0\text{ and every }x\in D_\lambda.
		\end{equation}
	\end{proposition}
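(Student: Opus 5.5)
We argue by contradiction and suppose that \(\lambda_0<0\). By Lemma~\ref{Lem5.2}, \(\lambda_0\) is finite, and Lemma~\ref{Lem5.3} gives \(u\le u_{\lambda_0}\) in \(D_{\lambda_0}\). Lemma~\ref{Lem5.5}, which rests on the nonremovable singular point through Lemma~\ref{Lem5.4}, upgrades this to \(u<u_{\lambda_0}\) in \(D_{\lambda_0}\). These are exactly the hypotheses used in Lemma~\ref{Lem5.6}. That lemma therefore supplies \(\eta>0\) such that \(\mathcal M(E_\lambda)<1/C\) for every \(\lambda\in(\lambda_0,\lambda_0+\eta)\), where \(C\) is the constant of Proposition~\ref{Prop4.1}. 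Corollary~\ref{Cor4.1} then yields \(W_\lambda\equiv0\), that is, \(u\le u_\lambda\) in \(D_\lambda\), for all such \(\lambda\).

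Next we check that these planes enter \(\Lambda_-\). For \(t\le\lambda_0\) the comparison holds by the definition of \(\lambda_0\) as a supremum, together with Lemma~\ref{Lem5.3} for \(t=\lambda_0\) itself. For \(t\in(\lambda_0,\lambda_0+\eta)\) it holds by the previous paragraph. Hence every \(\lambda\in(\lambda_0,\lambda_0+\eta/2]\) with \(\lambda<0\) belongs to \(\Lambda_-\). This contradicts \(\lambda_0=\sup\Lambda_-\), so \(\lambda_0=0\).

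For \eqref{eq5.7}, fix \(\lambda<0\). Since \(\sup\Lambda_-=0\), there is some \(\lambda'\in\Lambda_-\) with \(\lambda<\lambda'\). The definition of \(\Lambda_-\) then gives \(u\le u_t\) in \(D_t\) for every \(t\le\lambda'\), and in particular for \(t=\lambda\). Note that \(\Lambda_-\) is downward closed by construction.

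The only delicate point is making sure that Lemma~\ref{Lem5.6} applies uniformly on a one-sided neighborhood of \(\lambda_0\). This requires strict comparison at \(\lambda_0\), which is where the nonremovable singularity enters. It also requires the threshold \(\tau_*\) to be independent of \(\lambda\), which Corollary~\ref{Cor4.2} guarantees. Both ingredients are already established, so the proof reduces to assembling them as above.
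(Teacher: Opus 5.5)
Your proposal is correct and follows the paper's argument. You assume $\lambda_0<0$, use Lemma~\ref{Lem5.6} and Corollary~\ref{Cor4.1} to get $u\le u_\lambda$ on $(\lambda_0,\lambda_0+\eta)$, which pushes these planes into $\Lambda_-$ for a contradiction, and obtain \eqref{eq5.7} by downward closure; your explicit use of Lemma~\ref{Lem5.3} at $t=\lambda_0$, and of Lemmas~\ref{Lem5.3}--\ref{Lem5.5}, only spells out steps that the paper leaves implicit or places inside the proof of Lemma~\ref{Lem5.6}.
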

	
	\begin{proof}
		Suppose, for a contradiction, that \(\lambda_0<0\). Let \(\eta>0\) be given by Lemma~\ref{Lem5.6}. Then for every \(\lambda\in(\lambda_0,\lambda_0+\eta)\), one has
		\[
		C\mathcal M(E_\lambda)<1.
		\]
		Corollary~\ref{Cor4.1} therefore implies
		\[
		u\le u_\lambda \quad\text{in }D_\lambda
		\]
		for every \(\lambda\in(\lambda_0,\lambda_0+\eta)\). Since all \(t\le\lambda_0\) already satisfy the comparison by the definition of \(\Lambda_-\), it follows that every
		\[
		\lambda\in(\lambda_0,\lambda_0+\eta)
		\]
		belongs to \(\Lambda_-\). This contradicts the definition of \(\lambda_0=\sup\Lambda_-\). Hence \(\lambda_0=0\).
		
		Since \(\Lambda_-\) is downward closed and has supremum \(0\), every \(\lambda<0\) belongs to \(\Lambda_-\). Therefore \eqref{eq5.7}~holds.
	\end{proof}
	
	The same arguments give the following.
	
	\begin{proposition}\label{Prop5.3}
		We have
		\begin{equation}\label{eq5.8}
			u(x)\le u_\lambda(x) \quad\text{for every }\lambda>0\text{ and }x\in\{x_1>\lambda\}\setminus(\Gamma\cup\Gamma_\lambda).
		\end{equation}
	\end{proposition}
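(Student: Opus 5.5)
The plan is to reduce Proposition~\ref{Prop5.3} to the already proved left-hand statement, Proposition~\ref{Prop5.2}, by reflecting the first coordinate. Let $S(x_1,x')=(-x_1,x')$ and set $\tilde u=u\circ S$. The hypothesis \ref{Ass1.1} gives $S(\Gamma)=\Gamma$, because $\Gamma\subset\{x_1=0\}$, so $S$ fixes $\Gamma$ pointwise. Since $S$ is a linear isometry preserving Lebesgue measure, the change of variables in Lemma~\ref{Lem3.1} (with $R_\lambda$ replaced by $S=R_0$) shows the following. First, $\tilde u$ is again a positive weak singular solution of \eqref{eq1.1} with the same singular set $\Gamma$, and $\Hs[\tilde u]=\Hs[u]\circ S$. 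Second, $\tilde u\in\mathfrak A(\R^N)$ with the same $L^{m_H}$ and $L^{m_B}$ norms. Third, the continuous representative of $\tilde u$ is $u\circ S$. Finally, $\tilde u$ has a nonremovable singular point $S(x_0)=x_0\in\Gamma$. Hence $\tilde u$ satisfies every hypothesis of Theorem~\ref{Thm1.1}, and Proposition~\ref{Prop5.2} applies to it.

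Next I will check the reflection identities. For $\lambda>0$, conjugating by $S$ turns the reflection $R_\lambda$ into $R_{-\lambda}$, that is, $S R_{\lambda} S=R_{-\lambda}$. Then $\tilde u_{-\lambda}(y)=\tilde u(y^{-\lambda})=u(S y^{-\lambda})=u((Sy)^{\lambda})=u_\lambda(Sy)$. The map $S$ sends $\{x_1>\lambda\}$ onto $\Sigma_{-\lambda}$ and $\Gamma_\lambda$ onto $\Gamma_{-\lambda}$, because $S R_\lambda\Gamma=R_{-\lambda}S\Gamma=\Gamma_{-\lambda}$. Consequently $S$ maps $D^+_\lambda=\{x_1>\lambda\}\setminus(\Gamma\cup\Gamma_\lambda)$ bijectively onto $D_{-\lambda}$ (computed for $\tilde u$).

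Applying \eqref{eq5.7} to $\tilde u$ with $-\lambda<0$ gives $\tilde u(y)\le\tilde u_{-\lambda}(y)$ for every $y\in D_{-\lambda}$. Writing $y=Sx$ with $x\in D^+_\lambda$, this reads $u(x)\le u_\lambda(x)$, which is \eqref{eq5.8}.

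The only step needing care is verifying that every ingredient used in Section~\ref{Sec5} is invariant under $S$. This covers the admissibility constants, the threshold $\tau_*$ of Corollary~\ref{Cor4.2} (which depends on $u$ only through $\|u\|_{L^{m_H}(\R^N)}$), and the notion of nonremovable point. All of these follow from measure preservation, so I expect no real obstacle. An alternative, if one prefers, is to rerun Lemmas~\ref{Lem5.2}--\ref{Lem5.6} and Proposition~\ref{Prop5.2} verbatim with $\lambda\to+\infty$ decreasing to $0$, using Lemma~\ref{Lem2.6}(iii) and the right-hand alternatives in Proposition~\ref{Prop5.1} and Lemmas~\ref{Lem5.4}--\ref{Lem5.5}.
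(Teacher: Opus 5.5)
Your proposal is correct and is the paper's argument: you conjugate by $x_1\mapsto -x_1$, note that the equation, $\Gamma$, \ref{Ass1.1}--\ref{Ass1.2} and nonremovability are invariant, apply Proposition~\ref{Prop5.2} to $\widetilde u$ at the plane $t=-\lambda<0$, and translate back. Your explicit checks of $S R_\lambda S=R_{-\lambda}$, $\widetilde u_{-\lambda}=u_\lambda\circ S$, and $S(D_\lambda^+)=D_{-\lambda}$ spell out what the paper states more tersely.
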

	
	\begin{proof}
		Set $\widetilde u(x_1,x')=u(-x_1,x')$. The equation, the singular set, and conditions \ref{Ass1.1}-\ref{Ass1.2} are invariant under this reflection; non-removability is preserved as well. Proposition \ref{Prop5.2} therefore applies to $\widetilde u$. For $\lambda>0$ and $x_1>\lambda$, put $z=(-x_1,x')$ and $t=-\lambda<0$. Then $z_1<t$ and
		\[
		\widetilde u(z)=u(x),\qquad \widetilde u(z^t)=u(x^\lambda).
		\]
		The condition $x\notin\Gamma\cup\Gamma_\lambda$ is equivalent to $z\notin\Gamma\cup\Gamma_t$. The left comparison for $\widetilde u$ thus gives \eqref{eq5.8}.
	\end{proof}
	
	\begin{proof}[Proof of Theorem~\ref{Thm1.1}]
		Propositions~\ref{Prop5.2} and \ref{Prop5.3} give the reflected comparisons for every \(\lambda<0\) in \(D_\lambda\) and every \(\lambda>0\) in \(D_\lambda^+\), respectively. For $x_1<0$, let the left plane tend to zero to obtain $u(x_1,x')\le u(-x_1,x')$. Applying the right comparison at $(-x_1,x')$ and letting its plane tend to zero gives the reverse inequality. The reflected points remain away from $\Gamma$ for planes sufficiently close to zero, so both limits follow from continuity. The equality on $\{x_1=0\}\setminus\Gamma$ is immediate. This proves \eqref{eq1.7}.
		
		If $x_1<y_1<0$, choose $\lambda=(x_1+y_1)/2$. Then $(x_1,x')\in D_\lambda$, because its reflection is $(y_1,x')\notin\Gamma$. Proposition~\ref{Prop5.2} yields $u(x_1,x')\le u(y_1,x')$. Symmetry gives the corresponding monotonicity on the positive half-space. Thus \eqref{eq1.8} holds.
	\end{proof}
	
	\begin{proof}[Proof of Theorem~\ref{Thm1.2}]
		For $x_1<y_1<0$, use the same plane $\lambda=(x_1+y_1)/2$. Proposition~\ref{Prop5.2} and Lemma~\ref{Lem5.5} give
		\[
		u(x_1,x')<u_\lambda(x_1,x')=u(y_1,x').
		\]
		Strict decrease on the positive half-space follows from symmetry.
	\end{proof}
	
	\section{The Kelvin Transformation}\label{Sec6}
	
	We prove Theorem~\ref{Thm1.3}. Throughout this section, put
	\[
	d=N-2s,\qquad S=\frac{2N}{d},\qquad P=\frac{2N-\mu}{d},
	\]
	and assume
	\[
	N\ge2,\quad 0<s<1,\quad 0<\mu<N,\quad 1<p\le P,\quad 1<q\le P-1, \quad (p,q)\ne(P,P-1).
	\]
	In particular, $N>2s$, $\mu<4s$, and $p+q+1<2P$.
	
	\begin{lemma}\label{Lem6.1}
		Every such weak solution has a locally H\"older continuous representative in $\R^N\setminus\Gamma$. Moreover, $\Hs[u]$ is finite, continuous, and locally bounded there, and the source $\Hs[u]u^q$ is locally bounded.
	\end{lemma}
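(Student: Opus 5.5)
The plan is to reach the setting of Proposition~\ref{Prop2.2} locally, i.e.\ to show that the source $\Hs[u]u^q$ lies in $L^r_{\loc}(\R^N\setminus\Gamma)$ for some $r>N/(2s)$. The only inputs are Definition~\ref{Def1.2} and the bounds \eqref{eq1.9}, and a local Brezis--Kato bootstrap should supply the missing integrability.

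\emph{Step 1: integrability from the definition.} Since $\Hs[u]$ is finite at some point $x_*\notin\Gamma$, the first part of the proof of Lemma~\ref{Lem2.5} applies verbatim and gives
\[
\int_{\R^N\setminus\Gamma}\frac{u^p(y)}{1+|y|^\mu}\dd y<\infty,
\]
and in particular $u^p\in L^1_{\loc}(\R^N)$. Fix $\Omega\Subset\R^N\setminus\Gamma$ and a neighbourhood $\Omega'$ with $\Omega\Subset\Omega'\Subset\R^N\setminus\Gamma$. Split $\Hs[u]=J_1+J_2$, where $J_1$ is the integral over $\Omega'$ and $J_2$ the integral over its complement. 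As in Lemma~\ref{Lem2.5}, $J_2$ is bounded and Lipschitz on $\Omega$, by the weighted tail bound and the mean value theorem. Next, $u\in H^s_{\loc}(\R^N\setminus\Gamma)$ and the fractional Sobolev inequality applied to $u$ times a cutoff give $u\in L^{S}(\Omega')$ with $S=2N/d$. Then $u^p\in L^{S/p}(\Omega')$, and HLS gives $J_1\in L^{t}(\Omega)$ with
\[
\frac1t=\frac{pd}{2N}-1+\frac{\mu}{N}\le\frac{\mu}{2N},
\]
using $p\le P$. Similarly $u^{q-1}\in L^{S/(q-1)}$ with $(q-1)d/(2N)\le(4s-\mu)/(2N)$, using $q-1\le P-2=(4s-\mu)/d$. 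Hence the linearised coefficient $a:=\Hs[u]u^{q-1}$ lies in $L^{N/(2s)}_{\loc}(\R^N\setminus\Gamma)$. This is exactly the critical Brezis--Kato class, and it is obtained without any global integrability.

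\emph{Step 2: bootstrap (main obstacle).} Now view $u$ as a weak solution of the linear equation $(-\Delta)^su=a(x)u$ in $\Omega'$. I would run a fractional Brezis--Kato/Moser iteration with tests of the form $\eta^2u\min\{u,L\}^{2\beta}$, where $\eta\in C_c^\infty(\Omega')$. These tests are admissible by the local testing argument of Lemma~\ref{Lem3.2}, since $a\,u\in L^1_{\loc}$ and the test function is bounded. The critical coefficient is handled by absolute continuity: split $a=a\one_{\{a>k\}}+a\one_{\{a\le k\}}$ and choose $k$ so that $\|a\one_{\{a>k\}}\|_{L^{N/(2s)}(\Omega')}$ is small enough to be absorbed by the Sobolev constant. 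The nonlocal tail terms produced by the cutoff are controlled by $u\in\Ls(\R^N)$. Iterating yields $u\in L^r_{\loc}(\R^N\setminus\Gamma)$ for every $r<\infty$. The delicate points are the positivity and commutator terms in the fractional Kato-type inequality, together with the bookkeeping on shrinking domains. I expect this to be the longest step, but it does not involve the structure of $V$ beyond Step 1.

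\emph{Step 3: conclusion.} With $u^p\in L^r(\Omega')$ for large $r$, the local potential $J_1$ is bounded and H\"older continuous on $\Omega$ by the H\"older estimate of Lemma~\ref{Lem2.5}, taking $r>N/(N-\mu)$. Thus $\Hs[u]$ is finite everywhere, continuous and locally bounded in $\R^N\setminus\Gamma$. Then $f=\Hs[u]u^q\in L^r_{\loc}$ with $r>N/(2s)$. The Riesz-potential decomposition $u=P+h$ from Proposition~\ref{Prop2.2}, with $h$ $s$-harmonic in $\Omega_1$ and $h\in\Ls$, gives a locally H\"older continuous representative, unique as before. Finally, local boundedness of $u$ makes the source $\Hs[u]u^q$ locally bounded.
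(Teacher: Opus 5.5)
Your plan is sound, but at the key step you take a heavier route than the paper. Step~1 uses only the non-strict bounds $p\le P$ and $q-1\le P-2$, so you land exactly in the critical class $L^{N/(2s)}_{\loc}$ and then need a fractional Brezis--Kato iteration. The paper uses instead the hypothesis that one inequality in \eqref{eq1.9} is strict, i.e.\ $p+q+1<2P$.

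\textbf{How the paper avoids Brezis--Kato.} Put $\theta=p/S-(N-\mu)/N$.
\begin{itemize}
\item If $\theta>0$, your own HLS computation gives $c=\Hs[u]u^{q-1}\in L^k_{\loc}$ with $1/k=(p+q-1)/S-(N-\mu)/N$. The inequality $1/k<2s/N$ is equivalent to $p+q+1<2P$.
\item If $\theta\le0$, your formula for $1/t$ is nonpositive and HLS is not the right tool. H\"older's inequality on the local kernel then makes $\Hs[u]$ locally bounded (or in every large $L^t$), and $(q-1)/S\le(4s-\mu)/(2N)<2s/N$.
\end{itemize}
So the coefficient is strictly subcritical. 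Each use of the local representation $u=I_{2s}(\zeta cu)+h$ then improves the reciprocal integrability exponent of $u$ by the fixed amount $2s/N-1/k$. Finitely many steps give local boundedness and H\"older continuity. No truncated tests, Kato-type convexity inequalities, or nonlocal commutator estimates are needed.

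\textbf{What your route buys.} It never uses strictness. Once the Brezis--Kato step is actually carried out (you only sketch it), it would also give regularity at the endpoint $(p,q)=(P,P-1)$. There the gain $2s/N-1/k$ vanishes and the paper's bootstrap does not apply directly.

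\textbf{A correction in Step~2.} The test $\eta^2u\min\{u,L\}^{2\beta}$ is not bounded. Also, the approximation argument of Lemma~\ref{Lem3.2} relies on a locally bounded source, which you do not yet have. So the justification you give for admissibility fails. Admissibility does hold, for a different reason. Your Step~1 exponents give $\Hs[u]u^q\in L^{2N/(N+2s)}_{\loc}(\R^N\setminus\Gamma)$: for $\theta>0$ the reciprocal exponent is $(p+q)/S-(N-\mu)/N\le(N+2s)/(2N)$ by \eqref{eq1.9}. Hence the source pairing is continuous under $H^s$ convergence of tests supported in a fixed compact set, by the Sobolev embedding. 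This replaces the boundedness argument.
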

	
	\begin{proof}
		Choose $x_*\notin\Gamma$ at which $V(x_*):=\Hs[u](x_*)<\infty$. For every $R>0$,
		\[
		\int_{B_R}u^p(y)\dd y\le(R+|x_*|)^\mu V(x_*).
		\]
		Comparison of $|x_*-y|$ with $|y|$ outside a sufficiently large ball therefore gives
		\begin{equation}\label{eq6.1}
			\int_{\R^N}\frac{u^p(y)}{1+|y|^\mu}\dd y<\infty.
		\end{equation}
		Fix $U\Subset U'\Subset\R^N\setminus\Gamma$, and choose $\chi\in C_c^\infty(U')$ equal to one near $\overline U$. Local Sobolev embedding gives $u\in L^S(U')$. The decomposition
		\[
		V(x)=\int_{\R^N}\frac{\chi(y)u^p(y)}{|x-y|^\mu}\dd y +\int_{\R^N}\frac{(1-\chi(y))u^p(y)}{|x-y|^\mu}\dd y
		\]
		has a smooth bounded second term on $U$. Indeed, its kernel is separated from the diagonal, and all its derivatives on compact subsets of $U$ are dominated by an integrable multiple of the density in \eqref{eq6.1}.
		
		Set $\theta=p/S-(N-\mu)/N$. Since $p\le P<S$, the exponent $S/p$ is greater than one. If $\theta>0$, the Riesz-potential form of the Hardy-Littlewood-Sobolev inequality \cite{LiebLoss} gives $V\in L^{1/\theta}(U)$. If $\theta<0$, H\"older's inequality applied to the local kernel gives $V\in L^\infty(U)$. If $\theta=0$, apply the same Riesz estimate with an exponent slightly below $S/p$; then $V\in L^t(U)$ for every sufficiently large finite $t$. It follows that $c(x):=V(x)u(x)^{q-1}$ belongs locally to $L^k$ for some $k>N/(2s)$. To check this strict inequality when $\theta>0$, observe that
		\[
		\frac1k=\frac{p+q-1}{S}-\frac{N-\mu}{N}<\frac{2s}{N},
		\]
		where the last inequality is equivalent to $p+q+1<2P$. When $\theta\le0$, use instead
		\[
		\frac{q-1}{S}\le\frac{P-2}{S} =\frac{4s-\mu}{2N}<\frac{2s}{N}
		\]
		and, if necessary, choose $t$ sufficiently large.
		
		We detail the resulting local bootstrap. Fix $k>N/(2s)$ on a larger relatively compact regular open set, and put $\varepsilon_0=2s/N-1/k>0$. If $u\in L^t$ there, initially with $t=S$, then $cu\in L^r$, where $1/r=1/k+1/t$. The initial $r$ is greater than one because $1/k+1/S<(N+2s)/(2N)<1$. On a smaller open set, take a smooth compactly supported cutoff $\zeta$ equal to one near that set and write
		\[
		u=I_{2s}(\zeta cu)+h.
		\]
		Here $I_{2s}$ has the normalization for which $(-\Delta)^s I_{2s}f=f$. Both terms defining $h$ belong to $\Ls$; the compactly supported source gives the Riesz potential the decay $O(|x|^{-d})$ at infinity. Thus $h$ is distributionally $s$-harmonic on the smaller set and is smooth there; see \cite{Fall2016}. If $1/r>2s/N$, the Riesz estimate improves the reciprocal exponent of $u$ from $1/t$ to $1/t-\varepsilon_0$. At equality, every finite target exponent is available by slightly lowering $r$ on the bounded support. If $1/r<2s/N$, the potential is locally H\"older continuous, with every exponent $0<\alpha<\min\{2s-N/r,1\}$. Iterating on nested open sets, with a slightly smaller gain if an endpoint is encountered, we reach the last case after finitely many steps. This proves local boundedness and local H\"older continuity of $u$.
		
		Finally, on a compact regular set split $V$ into a small ball about the evaluation point, a bounded part separated from that point, and a far part. Local boundedness of $u$ makes the first contribution uniformly $O(\rho^{N-\mu})$. Dominated convergence applies to the other two parts by \eqref{eq6.1}. This proves continuity and local boundedness of $V$, and hence local boundedness of $Vu^q$. The continuous representative is unique because two continuous functions agreeing almost everywhere agree everywhere.
	\end{proof}
	
	Define the nonnegative exponents
	\begin{equation}\label{eq6.2}
		\tau=2N-\mu-pd,\qquad \sigma=N+2s-\mu-qd.
	\end{equation}
	They satisfy
	\[
	\sigma+\tau=d\bigl(2P-p-q-1\bigr)>0.
	\]
	These signs imply the inequalities between the weights and their reflections used below.
	
	\begin{proposition}\label{Prop6.1}
		Let $a\in\{x_1=0\}\setminus\Gamma$, and define
		\[
		\mathcal I_a(x)=a+\frac{x-a}{|x-a|^2},\qquad v(x)=|x-a|^{-d}u(\mathcal I_a(x)),\qquad K=\mathcal I_a(\Gamma)\cup\{a\}.
		\]
		Then $K$ is a compact zero-$(s,2)$-capacity subset of $\{x_1=0\}$, and $v\in\Ls(\R^N)\cap H^s_{\loc}(\R^N\setminus K)$ is continuous away from $K$. It satisfies, in the distributional weak sense,
		\begin{equation}\label{eq6.3}
			(-\Delta)^s v=|x-a|^{-\sigma}\widetilde V(x)v^q, \qquad \widetilde V(x)=\int_{\R^N} \frac{|y-a|^{-\tau}v(y)^p}{|x-y|^\mu}\dd y,
		\end{equation}
		where
		\begin{equation}\label{eq6.4}
			\widetilde V(x)=|x-a|^{-\mu}V(\mathcal I_a(x)), \qquad v(x)=O(|x|^{-d})\quad\text{as }|x|\to\infty.
		\end{equation}
		The potential and source in \eqref{eq6.3} are continuous and locally bounded away from $K$. A nonremovable singular point of $u$ gives a nonremovable singular point of $v$ in $\mathcal I_a(\Gamma)$.
	\end{proposition}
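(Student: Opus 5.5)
The plan has three parts, taken in order: the geometric assertions about \(K\), the algebraic Kelvin identities for \(\widetilde V\) and the source, and the weak equation \eqref{eq6.3} via a test-function change of variables.

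\emph{Geometry of \(K\).} Since \(a\in\{x_1=0\}\), the inversion \(\mathcal I_a\) maps the hyperplane \(\{x_1=0\}\setminus\{a\}\) into itself: the first coordinate of \(\mathcal I_a(x)\) is \(x_1/|x-a|^2\). Because \(\dist(a,\Gamma)>0\), the map \(\mathcal I_a\) is a smooth diffeomorphism from a neighborhood of \(\Gamma\) onto a neighborhood of \(\mathcal I_a(\Gamma)\), with bounded derivatives of all orders. Hence \(\mathcal I_a(\Gamma)\) is compact, lies in \(\{x_1=0\}\), and stays away from \(a\). For zero capacity I will transport the cutoffs of Lemma~\ref{Lem2.1}: for \(\varphi\) admissible for \(\Gamma\) with small \(H^s\) norm, localize by a fixed cutoff near \(\Gamma\) and compose with \(\mathcal I_a^{-1}=\mathcal I_a\). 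Bi-Lipschitz changes of variables with compact support preserve \(H^s\) up to constants, so \(\Cap_{s,2}(\mathcal I_a(\Gamma))=0\). A point has zero \((s,2)\)-capacity because \(N\ge2>2s\). Capacity is subadditive on disjoint compacts (use a sum of cutoffs), so \(\Cap_{s,2}(K)=0\). This is where \(N\ge2\) is used.

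\emph{Algebraic identities.} These follow from \(|\mathcal I_a x-\mathcal I_a y|=|x-y|/(|x-a||y-a|)\) and the Jacobian \(|x-a|^{-2N}\). Substituting \(y=\mathcal I_a(z)\) in \(V(\mathcal I_a x)\), and using \(u(\mathcal I_a z)=|z-a|^{d}v(z)\), gives
\[
V(\mathcal I_a x)=|x-a|^{\mu}\int\frac{|z-a|^{pd-2N+\mu}v^p(z)}{|x-z|^\mu}\dd z .
\]
This is \eqref{eq6.4} with the exponent \(\tau\) from \eqref{eq6.2}. For the equation I use the known Kelvin covariance \((-\Delta)^s v(x)=|x-a|^{-N-2s}\bigl((-\Delta)^s u\bigr)(\mathcal I_a x)\). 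Inserting the equation for \(u\) and collecting the powers \(|x-a|^{-N-2s+\mu+qd}\) gives exactly the factor \(|x-a|^{-\sigma}\) in \eqref{eq6.3}. The decay \(v=O(|x|^{-d})\) holds because \(\mathcal I_a x\to a\notin\Gamma\) and \(u\) is continuous, hence bounded, near \(a\). The same local boundedness together with \(u\in\Ls\) gives \(v\in\Ls\): near \(a\) we have \(v(x)\sim|x-a|^{-d}u(\mathcal I_a x)\), and the weight transforms correctly. Continuity of \(v\) away from \(K\), and local continuity and boundedness of the potential and source there, follow from Lemma~\ref{Lem6.1} composed with the smooth map \(\mathcal I_a\). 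Membership \(v\in H^s_{\loc}(\R^N\setminus K)\) follows from local \(H^s\) of \(u\), since on compacts away from \(K\) the map \(\mathcal I_a\) is smooth and the weight \(|x-a|^{-d}\) is smooth.

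\emph{Weak formulation.} Take \(\varphi\in C_c^\infty(\R^N\setminus K)\) and set \(\psi(y)=|y-a|^{-d}\varphi(\mathcal I_a y)\). Then \(\psi\) belongs to \(C_c^\infty(\R^N\setminus\Gamma)\), because \(\supp\varphi\) avoids \(a\), so \(\psi\) is compactly supported and avoids \(\Gamma\). I will use the pointwise identity \((-\Delta)^s\psi(y)=|y-a|^{-N-2s}((-\Delta)^s\varphi)(\mathcal I_a y)\), valid for \(y\ne a\) as a standard principal-value computation. Then
\[
\int v(-\Delta)^s\varphi=\int u(-\Delta)^s\psi .
\]
To see this, change variables \(x=\mathcal I_a y\); the weights \(|x-a|^{-d}\), \(|y-a|^{-N-2s}\) and the Jacobian cancel, since \(d+N+2s=2N\). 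The right side equals the source integral for \(u\) against \(\psi\), and the same change of variables turns it into \(\int|x-a|^{-\sigma}\widetilde V v^q\varphi\).

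\emph{Main obstacle.} The delicate point is justifying the change of variables for the nonlocal pairing, since \((-\Delta)^s\varphi\) is not compactly supported. The integrand \(u\,(-\Delta)^s\psi\) near \(y=a\) behaves like \(|y-a|^{-N-2s}\cdot|y-a|^{N+2s}\), because \((-\Delta)^s\varphi(x)=O(|x|^{-N-2s})\) at infinity. Thus it is bounded near \(a\), and absolute integrability on both sides holds using \(u,v\in\Ls\) and the boundedness of \(u\) near \(a\).

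\emph{Nonremovable point.} If \(u(x_n)\to\infty\) with \(x_n\to x_0\in\Gamma\), then \(z_n=\mathcal I_a x_n\to\mathcal I_a x_0\in\mathcal I_a(\Gamma)\). Moreover \(v(z_n)=|z_n-a|^{-d}u(x_n)\to\infty\), since \(|z_n-a|\) stays bounded.
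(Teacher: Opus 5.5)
Your proposal is correct and follows essentially the same route as the paper: transported capacitary cutoffs plus a point-capacity bound for $K$, the inversion identities and the substitution $y=\mathcal I_a(z)$ for \eqref{eq6.4} and for $v\in\Ls(\R^N)$ near $a$, the Kelvin covariance applied to the test function $|y-a|^{-d}\varphi(\mathcal I_a y)$, and the same sequence argument for the nonremovable point. One minor correction: zero capacity of $\{a\}$ needs only $N>2s$, since a bump of radius $\rho$ has squared $H^s$ norm $O(\rho^N+\rho^{N-2s})$, so $N\ge2$ is not what is used there; in this section $N\ge2$ is what guarantees that a center $a\in\{x_1=0\}\setminus\Gamma$ exists at all.
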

	
	\begin{proof}
		Choose a bounded open neighborhood $U$ of $\Gamma$ with $\overline U\subset\R^N\setminus\{a\}$. The localized construction in Lemma~\ref{Lem2.1} gives $\psi_j\in C_c^\infty(U)$, with $0\le\psi_j\le1$, $\psi_j=1$ near $\Gamma$, and $\|\psi_j\|_{H^s}\to0$. Put $\widehat\psi_j=\psi_j\circ \mathcal I_a$ away from $a$ and extend it by zero at $a$. These functions are smooth and their supports lie in a fixed bounded annulus $A_0$ about $a$. Choose a slightly larger annulus $A$ with $\overline{A_0}\subset A$. On $A$, the inversion and its inverse are Lipschitz and have bounded Jacobians. Changing variables therefore gives
		\[
		\|\widehat\psi_j\|_2^2+ \iint_{A\times A} \frac{|\widehat\psi_j(x)-\widehat\psi_j(y)|^2} {|x-y|^{N+2s}}\dd x\dd y \le C\|\psi_j\|_{H^s}^2.
		\]
		Since $\widehat\psi_j=0$ outside $A_0$ and $\dist(A_0,\R^N\setminus A)>0$, the remaining interactions satisfy
		\[
		\iint_{A\times(\R^N\setminus A)} \frac{|\widehat\psi_j(x)-\widehat\psi_j(y)|^2} {|x-y|^{N+2s}}\dd x\dd y \le C\|\widehat\psi_j\|_2^2.
		\]
		Hence $\|\widehat\psi_j\|_{H^s}\to0$. As $\widehat\psi_j=1$ near $\mathcal I_a(\Gamma)$, this proves $\Cap_{s,2}(\mathcal I_a(\Gamma))=0$. A bump of radius $\rho$ about $a$ has squared $H^s$ norm $O(\rho^N+\rho^{N-2s})$, so $\{a\}$ also has zero capacity. The union has zero capacity, is compact, and lies in the same hyperplane because $a_1=0$.
		
		The same change of variables and multiplication by the smooth factor $|x-a|^{-d}$ preserve local $H^s$ regularity away from $K$. Continuity follows from Lemma~\ref{Lem6.1}. Near $a$, the substitution $y=\mathcal I_a(x)$ gives
		\[
		\int_{B_\rho(a)}v(x)\dd x =\int_{\R^N\setminus B_{1/\rho}(a)} u(y)|y-a|^{-N-2s}\dd y<\infty.
		\]
		Local integrability on compact sets separated from $a$ follows from that of $u$. At infinity, $\mathcal I_a(x)\to a$, where $u$ is locally bounded, so the decay in \eqref{eq6.4} follows. These observations prove $v\in\Ls(\R^N)$.
		
		The identities
		\[
		|\mathcal I_a(x)-\mathcal I_a(y)|=\frac{|x-y|}{|x-a||y-a|}, \qquad |\det D\mathcal I_a(y)|=|y-a|^{-2N}
		\]
		and $\tau+pd=2N-\mu$ give the first identity in \eqref{eq6.4} by a change of variables in a nonnegative integral. Thus the transformed potential is finite and continuous away from $K$. For $\varphi\in C_c^\infty(\R^N\setminus K)$, put $\varphi_a(y)=|y-a|^{-d}\varphi(\mathcal I_a(y))$. Its support is compact and avoids $\Gamma\cup\{a\}$. The fractional Kelvin covariance \cite{ChenLiZhang} reads
		\[
		(-\Delta)^s\varphi_a(y) =|y-a|^{-N-2s}(( -\Delta)^s\varphi)(\mathcal I_a(y)).
		\]
		Using this identity in the weak equation for $u$ gives
		\begin{align*}
			\int_{\R^N}v(-\Delta)^s\varphi\dd x &=\int_{\R^N}u(-\Delta)^s\varphi_a\dd y\\
			&=\int_{\R^N}Vu^q\varphi_a\dd y =\int_{\R^N}|x-a|^{-\sigma}\widetilde Vv^q\varphi\dd x.
		\end{align*}
		The last equality uses $\sigma=N+2s-\mu-qd$. All these pairings are finite by the $\Ls$ condition and local boundedness on the compact supports. The transformed source is locally bounded by \eqref{eq6.4}. If $x_j\to x_0\in\Gamma$ and $u(x_j)\to\infty$, then
		\[
		v(\mathcal I_a(x_j))=|x_j-a|^d u(x_j)\to\infty, \qquad \mathcal I_a(x_j)\to \mathcal I_a(x_0).
		\]
		Lemma~\ref{Lem6.1} allows such a sequence to be chosen from the essential-supremum definition of a nonremovable point.
	\end{proof}
	
	For the transformed solution, introduce
	\begin{equation}\label{eq6.5}
		\mathcal A_v=|x-a|^{-\sigma}\widetilde Vv^{q-1},\qquad \mathcal F_v=|x-a|^{-\sigma}v^q,\qquad \mathcal G_v=|x-a|^{-\tau}v^{p-1}.
	\end{equation}
	Also put
	\[
	b=\frac{2N}{N+2s-\mu},\qquad a_0=\frac{2N}{2N-\mu},\qquad \mathcal N_v(E)= \|\mathcal A_v\|_{L^{N/(2s)}(E)} +\|\mathcal F_v\|_{L^b(E)}\|\mathcal G_v\|_{L^b(E)}.
	\]
	
	\begin{lemma}\label{Lem6.2}
		For every $\delta>0$, with $U_\delta(K)=\{x:\dist(x,K)<\delta\}$,
		\[
		\mathcal A_v\in L^{N/(2s)}(\R^N\setminus U_\delta(K)), \qquad \mathcal F_v,\mathcal G_v\in L^b(\R^N\setminus U_\delta(K)).
		\]
		Moreover, $v\in L^S$ and $\mathcal F_vv,\mathcal G_vv\in L^{a_0}$ on the same set. Each of these norms tends to zero when restricted to $\R^N\setminus B_R$ as $R\to\infty$. If $Z\in L^S(\R^N)$ is nonnegative and supported in a measurable set $E$ on which the indicated norms are finite, then
		\begin{equation}\label{eq6.6}
			\int_E\mathcal A_vZ^2\dd x+ \iint_{E\times E} \frac{\mathcal F_v(x)Z(x)\mathcal G_v(y)Z(y)}{|x-y|^\mu}\dd x\dd y \le C\mathcal N_v(E)\|Z\|_{L^S(\R^N)}^2.
		\end{equation}
	\end{lemma}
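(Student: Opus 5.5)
We split $\R^N\setminus U_\delta(K)$ into a bounded part $B_R\setminus U_\delta(K)$ and the far field $\R^N\setminus B_R$, with $R$ so large that $B_R$ contains $U_\delta(K)$ and $a$ with room to spare.

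\emph{Bounded part.} Proposition~\ref{Prop6.1} gives that $v$, $\widetilde V$ and the weights $|x-a|^{-\sigma},|x-a|^{-\tau}$ are continuous, hence bounded, on the compact set $\overline{B_R}\setminus U_\delta(K)$. Recall that $a\in K$, so $|x-a|\ge\delta$ there. Consequently $\mathcal A_v,\mathcal F_v,\mathcal G_v,v,\mathcal F_vv,\mathcal G_vv$ are all bounded on this set, and every $L^r$ norm over it is finite.

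\emph{Far field.} Here we use $v(x)=O(|x|^{-d})$ from \eqref{eq6.4}, $|x-a|\asymp|x|$, and the decay of $\widetilde V$. Since $\mathcal I_a(x)\to a\notin\Gamma$ and $V$ is continuous at $a$ by Lemma~\ref{Lem6.1}, we get $\widetilde V(x)=|x-a|^{-\mu}V(\mathcal I_a(x))=O(|x|^{-\mu})$. The pointwise bounds at infinity are then:
\begin{itemize}
\item[] $v\lesssim|x|^{-d}$, which lies in $L^S$ since $dS=2N>N$;
\item[] $\mathcal A_v\lesssim|x|^{-\sigma-\mu-(q-1)d}=|x|^{-2s-d}$, using $\sigma=N+2s-\mu-qd$;
\item[] $\mathcal F_v\lesssim|x|^{-\sigma-qd}=|x|^{-(N+2s-\mu)}$;
\item[] $\mathcal G_v\lesssim|x|^{-\tau-(p-1)d}=|x|^{-(N+2s-\mu)}$, using $\tau=2N-\mu-pd$.
\end{itemize}

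We now check integrability at infinity.
\begin{itemize}
\item[] $\mathcal A_v\in L^{N/(2s)}$ needs $(2s+d)N/(2s)>N$, which holds since $d>0$.
\item[] $\mathcal F_v,\mathcal G_v\in L^b$ needs $(N+2s-\mu)b=2N>N$, which holds.
\item[] $\mathcal F_vv,\mathcal G_vv\lesssim|x|^{-(2N-\mu)}$, and $(2N-\mu)a_0=2N>N$.
\end{itemize}
All these exponents are strictly supercritical, so each tail norm over $\R^N\setminus B_R$ tends to zero as $R\to\infty$, by dominated convergence or an explicit radial integral.

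\emph{Estimate \eqref{eq6.6}.} Since $Z$ is supported in $E$, only norms over $E$ enter.

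For the first term we use H\"older with exponents $N/(2s)$ and $S/2$, which are conjugate because $2s/N+2/S=(2s+N-2s)/N=1$. This gives
\[
\int_E\mathcal A_vZ^2\le\|\mathcal A_v\|_{L^{N/(2s)}(E)}\|Z\|_S^2 .
\]

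For the double integral we apply Lemma~\ref{Lem2.4} with $F=\mathcal F_vZ\one_E$ and $G=\mathcal G_vZ\one_E$, both with exponent $c$ defined by $1/c=1/b+1/S$. Then
\[
\frac1c=\frac{N+2s-\mu}{2N}+\frac{N-2s}{2N}=\frac{2N-\mu}{2N}=\frac1{a_0},
\]
so $c=a_0\in(1,2)$ and the HLS balance $2/a_0+\mu/N=2$ holds. H\"older then gives $\|\mathcal F_vZ\|_{a_0}\le\|\mathcal F_v\|_{L^b(E)}\|Z\|_S$, and likewise for $\mathcal G_v$. Multiplying the two bounds yields \eqref{eq6.6}.

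The main point needing care is the far-field decay of $\widetilde V$. This rests on $V$ being finite and continuous at $a\in\{x_1=0\}\setminus\Gamma$, which Lemma~\ref{Lem6.1} supplies. The remaining work is routine exponent bookkeeping, and the identities relating $\sigma,\tau$ to $b$ and $a_0$ make every exponent come out exactly critical-plus-margin.
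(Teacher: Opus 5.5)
Your proposal is correct and follows essentially the same route as the paper. You use local boundedness away from $K$ on the bounded part and pointwise decay at infinity from the Kelvin factors together with boundedness of $u$ and $V$ near $a$. Then you apply H\"older with exponents $N/(2s)$, $S/2$ to the first term of \eqref{eq6.6}, and HLS with exponents $a_0,a_0$ followed by H\"older to the double integral.

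One arithmetic slip needs fixing. Since $\sigma+\mu+(q-1)d=N+2s-d=4s$, the correct bound is $\mathcal A_v=O(|x|^{-4s})$, as stated in the paper. It is not $O(|x|^{-2s-d})=O(|x|^{-N})$, which overstates the decay when $4s<N$. Your conclusion is unaffected, because $4s\cdot\frac{N}{2s}=2N>N$, so $\mathcal A_v$ is still in $L^{N/(2s)}$ near infinity and its tail norm still tends to zero.
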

	
	\begin{proof}
		All the functions under consideration are locally bounded away from $K$. Using \eqref{eq6.2}-\eqref{eq6.5} and boundedness of $u,V$ near $a$, we obtain
		\[
		\mathcal A_v(x)=|x-a|^{-4s} V(\mathcal I_a(x))u(\mathcal I_a(x))^{q-1}=O(|x|^{-4s}),
		\]
		\[
		\mathcal F_v(x),\mathcal G_v(x) =O(|x|^{-(N+2s-\mu)}),\qquad \mathcal F_v(x)v(x),\mathcal G_v(x)v(x) =O(|x|^{-(2N-\mu)}).
		\]
		These bounds and $v=O(|x|^{-d})$ prove all the integrability assertions; raising each bound to its stated exponent gives $O(|x|^{-2N})$. The tail limits follow by absolute continuity of the integrals. Finally,
		\[
		\frac2{a_0}+\frac\mu N=2,\qquad \frac1{a_0}=\frac1b+\frac1S, \qquad \frac{2s}{N}+\frac2S=1.
		\]
		Apply H\"older's inequality to the first term of \eqref{eq6.6}, and HLS with exponents $a_0,a_0$, followed by H\"older, to its second term. All exponents lie strictly between one and infinity.
	\end{proof}
	
	\begin{proposition}\label{Prop6.2}
		There exists $\kappa>0$, depending only on $N,s,\mu,p,q$, with the following property. For $\lambda<0$, let
		\[
		K_\lambda=R_\lambda K,\quad v_\lambda(x)=v(x^\lambda),\quad E_\lambda=\{x\in\Sigma_\lambda\setminus K_\lambda:v(x)>v_\lambda(x)\},\quad W=(v-v_\lambda)^+\one_{\Sigma_\lambda}.
		\]
		We set $W=0$ on $K_\lambda$. All integrals below are invariant under changes on this null set. Then
		\begin{equation}\label{eq6.7}
			\mathcal N_v(E_\lambda)<\kappa \quad\Longrightarrow\quad v\le v_\lambda\quad\text{in }\Sigma_\lambda\setminus K_\lambda.
		\end{equation}
	\end{proposition}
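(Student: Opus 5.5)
The plan is to repeat, for the Kelvin transform $v$, the energy argument of Proposition~\ref{Prop4.1} and Corollary~\ref{Cor4.1}, with the weights $|x-a|^{-\sigma}$ and $|y-a|^{-\tau}$ in place of the unweighted Hartree structure. First I would establish the reflected potential identity. Write $\widetilde V_\lambda(x)=\widetilde V(x^\lambda)$. For $x\in\Sigma_\lambda\setminus K_\lambda$, split the integral defining $\widetilde V$ over $\Sigma_\lambda$ and its reflection, as in Lemma~\ref{Lem4.1}. This gives
\[
\widetilde V(x)-\widetilde V_\lambda(x)=\int_{\Sigma_\lambda}\Bigl(|y-a|^{-\tau}v^p(y)-|y^\lambda-a|^{-\tau}v_\lambda^p(y)\Bigr)\Bigl(\frac1{|x-y|^\mu}-\frac1{|x^\lambda-y|^\mu}\Bigr)\dd y .
\]
Since $a_1=0>\lambda$, every $y\in\Sigma_\lambda$ satisfies $|y^\lambda-a|\le|y-a|$. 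Because $\tau\ge0$, this gives $|y-a|^{-\tau}\le|y^\lambda-a|^{-\tau}$. On the complement of $E_\lambda$ the integrand is therefore $\le0$, by Lemma~\ref{Lem4.2}. On $E_\lambda$ the mean value inequality bounds it by $p|y-a|^{-\tau}v^{p-1}W$. Hence
\[
(\widetilde V-\widetilde V_\lambda)^+(x)\le p\int_{E_\lambda}\frac{\mathcal G_v(y)W(y)}{|x-y|^\mu}\dd y .
\]
The same weight comparison with $\sigma\ge0$ shows that, on $E_\lambda$, the source difference $|x-a|^{-\sigma}\widetilde Vv^q-|x^\lambda-a|^{-\sigma}\widetilde V_\lambda v_\lambda^q$ is bounded by
\[
|x-a|^{-\sigma}\Bigl(\widetilde V(v^q-v_\lambda^q)+(\widetilde V-\widetilde V_\lambda)^+v^q\Bigr)\le q\mathcal A_vW+\mathcal F_v(\widetilde V-\widetilde V_\lambda)^+ .
\]

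Next comes the testing step. Note that $\Sigma_\lambda$ stays at positive distance from $K\subset\{x_1=0\}$, while $K_\lambda\subset\Sigma_\lambda$. Thus $W$ is bounded near $K_\lambda$, and it lies in $L^S$ by Lemma~\ref{Lem6.2}, since $0\le W\le v$ on $\Sigma_\lambda$. I would test the weak equation with $\Psi=W\xi_{j,R}^2$, using Lemma~\ref{Lem3.2}; that lemma explicitly allows locally bounded weighted sources, and Proposition~\ref{Prop6.1} supplies local boundedness. Lemma~\ref{Lem3.3} bounds the pairing from below by $\tfrac{C_{N,s}}2([W\xi_{j,R}]^2_{H^s}-\mathcal R_{j,R})$. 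The two source terms from the first step are bounded via \eqref{eq6.6} with $Z=W$ and $E=E_\lambda$. This gives
\[
[W\xi_{j,R}]^2_{H^s}\le C\mathcal N_v(E_\lambda)\|W\|^2_{L^S}+\mathcal R_{j,R}.
\]
The cutoffs are those of Lemma~\ref{Lem3.4} with $\Gamma_\lambda$ replaced by $K_\lambda$; Lemmas~\ref{Lem2.1} and \ref{Lem2.3} apply because $K$ has zero capacity. Letting $j,R\to\infty$ along a diagonal, Fatou's lemma gives $W\in\dot H^s$ and
\[
[W]^2_{H^s}\le C\mathcal N_v(E_\lambda)C_{\mathrm{Sob}}[W]^2_{H^s}.
\]
Choosing $\kappa=(CC_{\mathrm{Sob}})^{-1}$ then forces $[W]_{H^s}=0$, so $W=0$. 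Continuity of $v$ away from $K$ upgrades this to pointwise comparison on $\Sigma_\lambda\setminus K_\lambda$. The constant depends only on the HLS, H\"older and Sobolev constants, hence only on $N,s,\mu,p,q$.

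The main obstacle I expect is controlling the positive part of the reflected potential with the weights. Lemma~\ref{Lem4.1} now carries the factor $|y-a|^{-\tau}$, and the sign argument relies on $\tau,\sigma\ge0$ together with $|y^\lambda-a|\le|y-a|$; this is precisely where \eqref{eq1.9} enters. A second point needs care: the finiteness of $\mathcal N_v(E_\lambda)$ and of the pairings, since $E_\lambda$ may approach $K_\lambda$. However, $\mathcal N_v(E_\lambda)<\kappa$ is assumed and hence finite. Moreover, $E_\lambda\subset\Sigma_\lambda$ stays away from $K$, so Lemma~\ref{Lem6.2} applies with $\delta=\dist(\Sigma_\lambda,K)$. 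The boundedness of $W$ near $K_\lambda$ holds because $W\le v$ there and $v$ is locally bounded off $K$.
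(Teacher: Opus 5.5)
Your proposal is correct and follows the paper's proof essentially step for step. The shared steps are the weight comparisons $|x-a|^{-\sigma}\le|x^\lambda-a|^{-\sigma}$ and $|x-a|^{-\tau}\le|x^\lambda-a|^{-\tau}$ coming from $\sigma,\tau\ge0$, the bounds \eqref{eq6.8} and \eqref{eq6.9}, testing with $W\xi_{j,R}^2$ via Lemmas~\ref{Lem3.2}--\ref{Lem3.4} with $K_\lambda$ in place of $\Gamma_\lambda$, the estimate \eqref{eq6.6}, and then Fatou's lemma and the Sobolev inequality. The one point you leave implicit is the absolute convergence of the split potential identity at regular points; it holds because the absolute integrand is dominated by the two finite potentials $\widetilde V(x)$ and $\widetilde V(x^\lambda)$.
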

	
	\begin{proof}
		Since $a_1=0$ and $\lambda<0$, for $x\in\Sigma_\lambda\setminus K_\lambda$,
		\[
		|x-a|>|x^\lambda-a|, \qquad |x-a|^{-\sigma}\le|x^\lambda-a|^{-\sigma}, \qquad |x-a|^{-\tau}\le|x^\lambda-a|^{-\tau}.
		\]
		Put $h(x)=|x-a|^{-\tau}v(x)^p$. Splitting the potential into two half-spaces and reflecting gives
		\[
		\widetilde V(x)-\widetilde V(x^\lambda) =\int_{\Sigma_\lambda}(h(y)-h(y^\lambda)) \left(\frac1{|x-y|^\mu}-\frac1{|x^\lambda-y|^\mu}\right)\dd y.
		\]
		The identity is absolutely convergent at regular points, since its absolute integrand is controlled by terms in the two finite potentials. The kernel difference is nonnegative. The inequalities between the reflected weights and $p>1$ imply
		\begin{equation}\label{eq6.8}
			(\widetilde V-\widetilde V_\lambda)^+(x) \le p\int_{E_\lambda} \frac{\mathcal G_v(y)W(y)}{|x-y|^\mu}\dd y.
		\end{equation}
		Indeed, $h-h_\lambda\le0$ outside $E_\lambda$, while on $E_\lambda$ it is bounded above by $|y-a|^{-\tau}(v^p-v_\lambda^p)\le p\mathcal G_vW$. Set $f=|x-a|^{-\sigma}\widetilde Vv^q$. On $E_\lambda$, the inequality between $|x-a|^{-\sigma}$ and $|x^\lambda-a|^{-\sigma}$ gives
		\begin{equation}\label{eq6.9}
			f-f_\lambda\le q\mathcal A_vW +\mathcal F_v(\widetilde V-\widetilde V_\lambda)^+.
		\end{equation}
		
		Choose the capacitary and far-field cutoffs from Section~\ref{Sec3}, with $0\le\eta_j,\phi_R\le1$, $\eta_j$ vanishing near $K_\lambda$, and $\eta_j=1$ outside shrinking neighborhoods of $K_\lambda$. Set
		\[
		\xi_{j,R}=\eta_j\phi_R,\qquad Z_{j,R}=W\xi_{j,R}.
		\]
		Lemma~\ref{Lem3.2} applies: the two functions are locally $H^s$ and continuous away from their singular sets, belong to $\Ls$, and have locally bounded sources. Consequently $Z_{j,R}\in H^s(\R^N)$, and the reflected weak equation can be tested with $W\xi_{j,R}^2$. Lemma~\ref{Lem3.3} yields
		\begin{equation}\label{eq6.10}
			\E(v-v_\lambda,W\xi_{j,R}^2) \ge c_s[Z_{j,R}]_{H^s}^2-C_s\mathcal R_{j,R},
		\end{equation}
		where
		\[
		\mathcal R_{j,R}= \iint_{\Sigma_\lambda\times\Sigma_\lambda} \frac{W(x)W(y)|\xi_{j,R}(x)-\xi_{j,R}(y)|^2} {|x-y|^{N+2s}}\dd x\dd y.
		\]
		Since $0\le W\le v$ on $\Sigma_\lambda$, Lemma~\ref{Lem6.2} gives $W\in L^S(\Sigma_\lambda)$; moreover, $W$ is bounded near $K_\lambda$, which has positive distance from $K$. Lemma~\ref{Lem3.4} gives
		\begin{equation}\label{eq6.11}
			\lim_{R\to\infty}\limsup_{j\to\infty}\mathcal R_{j,R}=0.
		\end{equation}
		
		Combining \eqref{eq6.8} and \eqref{eq6.9} with the tested equation, and using $0\le\xi_{j,R}\le1$, gives
		\begin{equation}\label{eq6.12}
			\begin{aligned}
				\E(v-v_\lambda,W\xi_{j,R}^2) &\le q\int_{E_\lambda}\mathcal A_vW^2\dd x +p\iint_{E_\lambda\times E_\lambda} \frac{\mathcal F_v(x)W(x)\mathcal G_v(y)W(y)} {|x-y|^\mu}\dd x\dd y\\
				&\le C\mathcal N_v(E_\lambda) \|W\|_{L^S(\R^N)}^2.
			\end{aligned}
		\end{equation}
		The last bound follows from \eqref{eq6.6}. In particular, both terms are finite by the already established $L^S$-integrability of $W$. Combining \eqref{eq6.10} and \eqref{eq6.12}, we obtain
		\[
		[Z_{j,R}]_{H^s}^2 \le C\mathcal N_v(E_\lambda)\|W\|_{L^S(\R^N)}^2 +C\mathcal R_{j,R}.
		\]
		By \eqref{eq6.11}, choose $R_n\to\infty$ and $j_n\to\infty$ so that $\mathcal R_{j_n,R_n}\to0$ and $\xi_{j_n,R_n}\to1$ almost everywhere. Fatou's lemma in the Gagliardo integral yields
		\begin{equation}\label{eq6.13}
			[W]_{H^s}^2 \le C\mathcal N_v(E_\lambda)\|W\|_{L^S(\R^N)}^2<\infty.
		\end{equation}
		By \eqref{eq6.13}, $W\in\dot H^s(\R^N)$, and the homogeneous Sobolev inequality now gives
		\[
		[W]_{H^s}^2 \le C\mathcal N_v(E_\lambda)[W]_{H^s}^2.
		\]
		Choose $\kappa>0$ so that $C\kappa<1$. Under the hypothesis in \eqref{eq6.7}, we obtain $[W]_{H^s}=0$, and hence $W=0$ almost everywhere because $W\in L^S(\R^N)$. Continuity away from $K_\lambda$ proves \eqref{eq6.7}.
	\end{proof}
	
	\begin{proposition}\label{Prop6.3}
		For every regular center $a$ in the singular hyperplane, the corresponding Kelvin transform $v$ is symmetric with respect to that hyperplane, strictly increasing in $x_1$ on $\{x_1<0\}$, and strictly decreasing in $x_1$ on $\{x_1>0\}$.
	\end{proposition}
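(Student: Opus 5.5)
The plan is to run the moving-plane scheme of Section~\ref{Sec5} for the transformed function $v$, with Proposition~\ref{Prop6.2} taking the place of Corollary~\ref{Cor4.2} and $\mathcal N_v$ taking the place of $\mathcal M$. Fix a regular center $a\in\{x_1=0\}\setminus\Gamma$. By Proposition~\ref{Prop6.1}, $K=\mathcal I_a(\Gamma)\cup\{a\}$ is compact, has zero capacity, and lies in $\{x_1=0\}$. Moreover, $v$ is continuous off $K$, decays like $|x|^{-d}$, and has a nonremovable singular point in $\mathcal I_a(\Gamma)$.

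\textbf{Start of the plane.} Since $E_\lambda\subset\Sigma_\lambda$, and for $\lambda<-1$ the half-space $\Sigma_\lambda$ lies outside $U_\delta(K)$ for some fixed $\delta$, Lemma~\ref{Lem6.2} shows that $\mathcal N_v(\Sigma_\lambda)\to0$ as $\lambda\to-\infty$. Here I use that $\Sigma_\lambda\subset\R^N\setminus B_{|\lambda|}$ and the tail limits of that lemma. Proposition~\ref{Prop6.2} then gives $v\le v_\lambda$ on $\Sigma_\lambda\setminus K_\lambda$ for all $\lambda<-M$. Define $\lambda_0$ as the supremum of the set of $\lambda<0$ for which the comparison holds for all $t\le\lambda$.

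\textbf{Limit plane and strict comparison.} Continuity gives $v\le v_{\lambda_0}$ off $K_{\lambda_0}$, exactly as in Lemma~\ref{Lem5.3}. The strict comparison $v<v_\lambda$ comes from the dichotomy of Proposition~\ref{Prop5.1}, via Lemma~\ref{Lem5.1} with $c=0$. For the supersolution property one needs $f_\lambda-f\ge0$ wherever $v\le v_\lambda$. This follows from three facts: the weight inequalities $|x-a|^{-\sigma}\le|x^\lambda-a|^{-\sigma}$ and $|x-a|^{-\tau}\le|x^\lambda-a|^{-\tau}$, which hold because $\sigma,\tau\ge0$; the reflected potential identity with nonnegative kernel (Lemma~\ref{Lem4.2}); and $v\le v_\lambda$ on the half-space. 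The identity $v\equiv v_\lambda$ is excluded as in Lemma~\ref{Lem5.4}, using the blow-up point of $v$ in $\mathcal I_a(\Gamma)\subset\{x_1=0\}$, whose reflection is a regular point. If $\lambda_0<0$, I argue as in Lemma~\ref{Lem5.6}. Fix $\varepsilon$ small. Choose $R$ so that the $\mathcal N_v$-type norms over $\R^N\setminus B_R$ are small, and choose $\rho$ and $\delta$ so that the norms over a thin slab and over a neighborhood of $K_{\lambda_0}$ are small. This needs local integrability of $\mathcal A_v^{N/2s}$ and $\mathcal F_v^b,\mathcal G_v^b$ near $\{x_1=\lambda_0\}$ and near $K_{\lambda_0}$. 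Both hold, because these sets stay at positive distance from $K$ when $\lambda_0<0$. Uniform continuity on compacts then excludes $E_\lambda$ from the remaining compact set, so $\mathcal N_v(E_\lambda)<\kappa$ for $\lambda\in(\lambda_0,\lambda_0+\eta)$. This contradicts maximality, so $\lambda_0=0$.

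\textbf{Conclusion.} Running the same argument from the right, via $x_1\mapsto -x_1$ (the center $a$, $K$, and the weights are invariant), gives symmetry about $\{x_1=0\}$ by the limiting argument in the proof of Theorem~\ref{Thm1.1}. Strict monotonicity follows from the strict comparison with $\lambda=(x_1+y_1)/2$, as in Theorem~\ref{Thm1.2}.

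\textbf{Main obstacle.} The hardest step is the smallness of $\mathcal N_v(E_\lambda)$ near $\lambda_0$, because $\mathcal N_v$ is not a plain $L^{m}$-norm of $v$. One must check that the weighted quantities are integrable on every region that avoids a neighborhood of $K$, including near the moving reflected set $K_\lambda$. Note that $E_\lambda$ never approaches $K$ itself, since $K\subset\{x_1=0\}$ and $\lambda_0<0$. I would bound all three weights locally by $\sup$-bounds from Lemma~\ref{Lem6.1} transported through $\mathcal I_a$ on compacts away from $K$, and use the decay estimates of Lemma~\ref{Lem6.2} at infinity.
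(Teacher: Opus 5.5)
Your proposal is correct and follows the paper's proof essentially step by step. The steps that match are: strong comparison via Lemma~\ref{Lem5.1} with $c=0$, after deriving $f\le f_\lambda$ from the weight inequalities and the reflected potential identity; exclusion of $v\equiv v_\lambda$ by reflecting a blow-up point in $\mathcal I_a(\Gamma)$; the start from $\mathcal N_v(\Sigma_\lambda)\to0$ and Proposition~\ref{Prop6.2}; the Lemma~\ref{Lem5.6}-type localization of $E_\lambda$ for $\lambda_0<0$, giving $\mathcal N_v(E_\lambda)<3\varepsilon+9\varepsilon^2<\kappa$; and the reflection $x_1\mapsto -x_1$ together with the plane $\lambda=(r+t)/2$ for symmetry and strict monotonicity. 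One small wording point: $f\le f_\lambda$ requires $v\le v_\lambda$ on the whole half-space, because the potential is nonlocal, and not merely at the point in question; your third listed fact shows this is how you actually use it.
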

	
	\begin{proof}
		First suppose that $v\le v_\lambda$ in $\Sigma_\lambda\setminus K_\lambda$, where $\lambda<0$. Put $h=|x-a|^{-\tau}v^p$ and $f=|x-a|^{-\sigma}\widetilde Vv^q$. The inequalities $v\le v_\lambda$ and $|x-a|^{-\tau}\le|x^\lambda-a|^{-\tau}$ give $h\le h_\lambda$. The reflection formula in the proof of Proposition~\ref{Prop6.2} then gives $\widetilde V\le\widetilde V_\lambda$. Together with $|x-a|^{-\sigma}\le|x^\lambda-a|^{-\sigma}$, this yields $f\le f_\lambda$. On every ball $B\Subset\Sigma_\lambda\setminus K_\lambda$, $v_\lambda-v$ is an antisymmetric supersolution of $(-\Delta)^s(v_\lambda-v)\ge0$ and belongs to $\mathcal D^s(B)$. The weak inequality extends to nonnegative $H_0^s(B)$ tests by the local testing argument in Lemma~\ref{Lem3.2}. Lemma~\ref{Lem5.1}, with $c=0$, gives strict positivity on $B$ unless $v=v_\lambda$ almost everywhere in all of $\R^N$. The case $v\equiv v_\lambda$ is excluded by the singularity. Indeed, choose $x_j$ approaching a nonremovable point of $K$ with $v(x_j)\to\infty$. The reflected points $x_j^\lambda$ approach a regular point with first coordinate $2\lambda$. Continuity would give $v(x_j^\lambda)=v(x_j)\to\infty$, contradicting local boundedness at that regular point. Thus $v\le v_\lambda$ implies $v<v_\lambda$ throughout $\Sigma_\lambda\setminus K_\lambda$.
		
		By Lemma~\ref{Lem6.2}, $\mathcal N_v(\Sigma_\lambda)\to0$ as $\lambda\to-\infty$. Proposition~\ref{Prop6.2} gives $v\le v_\lambda$ for all sufficiently negative $\lambda$. Define
		\[
		\Lambda=\{\lambda<0:v\le v_t\text{ in }\Sigma_t\setminus K_t \text{ for every }t\le\lambda\},\qquad \lambda_0=\sup\Lambda.
		\]
		The set is nonempty and downward closed. Continuity gives comparison at $\lambda_0$ if $\lambda_0<0$: for a fixed regular comparison point, its reflected images remain regular for planes sufficiently close to $\lambda_0$, since the reflected singular sets converge in the Hausdorff metric. The preceding strong comparison then gives $v<v_{\lambda_0}$ on $\Sigma_{\lambda_0}\setminus K_{\lambda_0}$.
		
		Suppose $\lambda_0<0$. Choose $\varepsilon>0$ such that $3\varepsilon+9\varepsilon^2<\kappa$. Choose $R$ large enough that each of the three norms of $\mathcal A_v,\mathcal F_v,\mathcal G_v$ in the definition of $\mathcal N_v$ is less than $\varepsilon$ outside $B_R$. Choose small $\delta\in(0,-\lambda_0/2)$ and $\rho>0$ so that each of those norms is also less than $\varepsilon$ on each of
		\[
		B_R\cap\{|x_1-\lambda_0|<\delta\},\qquad B_R\cap U_\rho(K_{\lambda_0}).
		\]
		This is possible by absolute continuity: these bounded sets lie in a fixed region separated from $K$, and their measures tend to zero. On the compact set
		\[
		L=\overline B_R\cap\{x_1\le\lambda_0-\delta\} \setminus U_\rho(K_{\lambda_0}),
		\]
		strict comparison and uniform continuity imply $v<v_\lambda$ for all $\lambda\in(\lambda_0,\lambda_0+\eta)$, with some $\eta\in(0,\delta/2)$. If $L$ is empty, take any $\eta\in(0,\delta/2)$. Consequently, for those planes,
		\[
		E_\lambda\subset (\R^N\setminus B_R)\cup \bigl(B_R\cap\{|x_1-\lambda_0|<\delta\}\bigr) \cup\bigl(B_R\cap U_\rho(K_{\lambda_0})\bigr).
		\]
		Thus $\mathcal N_v(E_\lambda)<3\varepsilon+9\varepsilon^2<\kappa$. Proposition~\ref{Prop6.2} extends the comparison beyond $\lambda_0$, contradicting its definition. Therefore $\lambda_0=0$, and
		\begin{equation}\label{eq6.14}
			v(x)<v(x^\lambda) \quad(\lambda<0,\ x\in\Sigma_\lambda\setminus K_\lambda).
		\end{equation}
		To justify the assertion for every $\lambda<0$, choose an element of $\Lambda$ between $\lambda$ and zero and use downward closure; strictness is the strong comparison proved above.
		
		Reflection across $\{x_1=0\}$ preserves $|x-a|$, $K$, and the weighted equation. Applying the same argument to $v(-x_1,x')$ gives the corresponding comparison from the right. Letting the two planes tend to zero and using continuity proves $v(x_1,x')=v(-x_1,x')$ away from $K$. For $r<t<0$, use \eqref{eq6.14} with $\lambda=(r+t)/2$. The point $(r,x')$ does not belong to $K_\lambda$, whose first coordinate is $r+t$. This gives $v(r,x')<v(t,x')$. Strict decrease on the right follows from symmetry.
	\end{proof}
	
	\begin{lemma}\label{Lem6.3}
		Suppose that all the Kelvin transforms with centers in $\{x_1=0\}\setminus\Gamma$ have the conclusions of Proposition~\ref{Prop6.3}. Then $u$ is symmetric with respect to $\{x_1=0\}$ and
		\begin{equation}\label{eq6.15}
			u(r,x')\le u(t,x')\qquad(r<t<0).
		\end{equation}
	\end{lemma}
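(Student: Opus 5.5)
We transfer the conclusions of Proposition~\ref{Prop6.3} back to \(u\) through the inverse Kelvin transform. The key fact is that inversion about a center \(a\) lying in \(\{x_1=0\}\) commutes with reflection across \(\{x_1=0\}\). Write \(\mathcal R x=(-x_1,x')\). Since \(a_1=0\), we have \(\mathcal R a=a\), \(|\mathcal R x-a|=|x-a|\), and \(\mathcal I_a(\mathcal R x)=\mathcal R\,\mathcal I_a(x)\).

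\textbf{Symmetry.} Fix \(x\in\R^N\setminus\Gamma\) with \(x_1\neq0\); the case \(x_1=0\) is trivial. Choose a center \(a\in\{x_1=0\}\setminus\Gamma\) with \(a\ne x\). Such a center exists because \(\Gamma\) is compact and \(N\ge2\), so \(\{x_1=0\}\setminus\Gamma\) is nonempty. Put \(y=\mathcal I_a(x)\). Then \(y\notin K\) and \(\mathcal R y=\mathcal I_a(\mathcal R x)\notin K\). Since \(\mathcal I_a\) is an involution,
\[
u(x)=|y-a|^{-d}v(y),\qquad u(\mathcal R x)=|\mathcal R y-a|^{-d}v(\mathcal R y).
\]
Proposition~\ref{Prop6.3} gives \(v(\mathcal R y)=v(y)\), and the weights agree, so \(u(\mathcal R x)=u(x)\). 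Thus symmetry holds for every regular point, and a single center suffices for each point.

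\textbf{Monotonicity.} Monotonicity is more delicate, because the inversion distorts the segment \(\{(t,x'):r<t<0\}\) and multiplies by the weight \(|y-a|^{-d}\). I would avoid pointwise monotonicity of \(v\) and instead transfer the reflection comparison \eqref{eq6.14}. Fix \(r<t<0\), and let \(\lambda=(r+t)/2\) be the plane used for the pair. A plane \(T_\lambda\) with \(\lambda\neq0\) is not mapped by \(\mathcal I_a\) to a plane; it becomes a sphere through \(a\). So the transferred comparison is really a comparison of \(u\) across a sphere, and the plan is to recover the planar statement in a limit.

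To do this, let the center run off to infinity tangentially: \(a=a_k=(0,a_k')\) with \(|a_k'|\to\infty\). This is allowed because \(N\ge2\) and \(\Gamma\) is bounded. For this sequence, consider the spheres \(\mathcal I_{a_k}(T_\mu)\), that is, the spheres through \(a_k\) symmetric with respect to \(\{x_1=0\}\) in the appropriate sense, chosen to pass through the midpoint \(((r+t)/2,x')\) and orthogonal to the segment there. These spheres converge locally to the plane \(T_\lambda\). The inversions in these spheres converge locally uniformly to the reflection \(R_\lambda\), and the weight ratio \((|x-a_k|/|x^*-a_k|)^{d}\) tends to \(1\).

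Concretely, for each \(k\), Proposition~\ref{Prop6.3} applied with the plane \(T_{\mu_k}\) in the \(v\)-picture, where \(\mu_k<0\) is chosen so that its image sphere passes through the midpoint, gives \(v\le v_{\mu_k}\). Pulling this back yields
\[
u(z)\le\Bigl(\frac{|z^{*}_k-a_k|}{|z-a_k|}\Bigr)^{d}u(z^{*}_k),
\]
where \(z^*_k\) is the inversion of \(z=(r,x')\) in that sphere. The sphere inequalities of the form \(|x-a|>|x^\lambda-a|\) used in Proposition~\ref{Prop6.2} should guarantee that \(z\) lies on the correct side. Letting \(k\to\infty\) gives \(z^*_k\to(t,x')\) and a weight ratio tending to \(1\). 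Continuity of \(u\) away from \(\Gamma\) then yields \eqref{eq6.15}.

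\textbf{Main obstacle.} The hard step is the geometric bookkeeping. One must verify that the correct side of the sphere corresponds to \(\Sigma_{\mu_k}\) in the \(v\)-picture, that \(\mu_k<0\) holds, and that the reflected points avoid \(K_{\mu_k}\); this last point requires \((t,x')\notin\Gamma\), which holds because \(t\neq0\). Finally, one must check that the limiting weight ratio is \(1\). I would first compute everything explicitly in the two-dimensional plane spanned by \(e_1\) and \(a_k'-x'\).
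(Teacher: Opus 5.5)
Your proof is correct and follows the paper's strategy: tangential centers $a_k=(0,x'+ke)$ escaping to infinity, a Kelvin weight ratio tending to one, and continuity of $u$ at the regular point $(t,x')$. The symmetry part is identical to the paper's.

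The difference is in which property of $v_k$ you pull back. The paper uses the strict $x_1$-monotonicity of $v_k$ directly. It compares $v_k$ at $X_k=\mathcal I_{a_k}(r,x')$, whose first coordinate is $r/(r^2+k^2)$, with $v_k$ at the point $Y_k$ on the same $x_1$-line with first coordinate $t/(r^2+k^2)$. An explicit computation then gives $\mathcal I_{a_k}(Y_k)\to(t,x')$. So the segment distortion that led you to avoid pointwise monotonicity is harmless, and no sphere geometry is needed.

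Your route through \eqref{eq6.14} and inversion in the spheres $\mathcal I_{a_k}(T_{\mu_k})$ works, but it is the same comparison in disguise. The sphere passes through $m=((r+t)/2,x')$ exactly when $\mu_k=m_1/|m-a_k|^2$. Then $y=\mathcal I_{a_k}(r,x')$ and $y^{\mu_k}$ lie on one $x_1$-line, with first coordinates $r/(r^2+k^2)$ and $2\mu_k-r/(r^2+k^2)=t/k^2+O(k^{-4})<0$. Hence $v_k(y)\le v_k(y^{\mu_k})$ is again just monotonicity of $v_k$ on $\{x_1<0\}$. This also settles that the lemma's hypothesis refers to the stated conclusions of Proposition~\ref{Prop6.3}, not to the intermediate inequality \eqref{eq6.14}.

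Three small corrections:
\begin{itemize}
\item Since $|x-a|\,|\mathcal I_a(x)-a|=1$, the inverse relation is $u(x)=|x-a|^{-d}v(y)=|y-a|^{d}v(y)$, not $|y-a|^{-d}v(y)$. This is harmless for symmetry because the two weights agree.
\item The spheres $\mathcal I_{a_k}(T_\mu)$ have center $(1/(2\mu),a_k')$ and radius $1/(2|\mu|)$, so they are tangent to $\{x_1=0\}$ at $a_k$. They are neither symmetric about that hyperplane nor exactly orthogonal to the segment at $m$. They only become so in the limit, which is all you use.
\item The correct-side condition $y_1<\mu_k$ does not come from the weight inequalities in Proposition~\ref{Prop6.2}. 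It follows because $s\mapsto s/(s^2+k^2)$ is increasing on $(-k,k)$ and $r<m_1$.
\end{itemize}
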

	
	\begin{proof}
		Let $\mathscr R(x_1,x')=(-x_1,x')$. Since $a_1=0$, $\mathcal I_a\mathscr R=\mathscr R \mathcal I_a$ and $|\mathscr R x-a|=|x-a|$. The symmetry of any one Kelvin transform therefore gives $u(y)=u(\mathscr R y)$ for $y\notin\Gamma\cup\{a\}$; at $a$ the identity is~automatic.
		
		For monotonicity, fix $r<t<0$ and $x'\in\R^{N-1}$. Choose a unit vector $e\in\R^{N-1}$ and, for large integers $k$, set $a_k=(0,x'+ke)$. Compactness of $\Gamma$ ensures $a_k\notin\Gamma$. Put $D_k=k^2+r^2$ and $H_k=k^2+t^2$, and set
		\[
		X_k=\left(\frac r{D_k},x'+ke-\frac{ke}{D_k}\right),\qquad Y_k=\left(\frac t{D_k},x'+ke-\frac{ke}{D_k}\right).
		\]
		Then $X_k=\mathcal I_{a_k}(r,x')$, and the two points have the same tangential coordinates with $(X_k)_1<(Y_k)_1<0$. Hence the strict monotonicity of the Kelvin transform $v_k$ centered at $a_k$ gives $v_k(X_k)<v_k(Y_k)$. A direct calculation yields
		\begin{equation}\label{eq6.16}
			Q_k:=\mathcal I_{a_k}(Y_k) =\left(\frac{tD_k}{H_k}, x'+\frac{k(t^2-r^2)}{H_k}e\right) \longrightarrow(t,x').
		\end{equation}
		Moreover, $|X_k-a_k|^2=D_k^{-1}$ and $|Y_k-a_k|^2=H_kD_k^{-2}$, so the Kelvin factors give
		\[
		u(r,x')<\left(\frac{D_k}{H_k}\right)^{d/2}u(Q_k).
		\]
		Since $D_k/H_k\to1$, \eqref{eq6.16} and continuity at $(t,x')$ prove \eqref{eq6.15}.
	\end{proof}
	
	\begin{proof}[Proof of Theorem~\ref{Thm1.3}]
		Lemma~\ref{Lem6.1} supplies the locally continuous representative and the local bounds needed above. Propositions~\ref{Prop6.1}-\ref{Prop6.3} apply to every regular center in the hyperplane. Lemma~\ref{Lem6.3} gives symmetry and weak transverse monotonicity of $u$. We next show $u\le u_\lambda$ in $\Sigma_\lambda\setminus\Gamma_\lambda$. If $x_1<\lambda<0$ and $y_1=2\lambda-x_1<0$, \eqref{eq6.15} gives $u(x_1,x')\le u(y_1,x')$. If $y_1=0$ and the reflected point is regular, the same inequality follows by continuity from the negative side. If $y_1>0$, symmetry and $x_1<-y_1=x_1-2\lambda<0$ give it again. Thus $u\le u_\lambda$ on $\Sigma_\lambda\setminus\Gamma_\lambda$ for every $\lambda<0$.
		
		The reflection identity \eqref{eq4.1} now yields $V\le V_\lambda$. Hence $u_\lambda-u$ is an antisymmetric supersolution of $(-\Delta)^s(u_\lambda-u)\ge0$ on every ball $B\Subset\Sigma_\lambda\setminus\Gamma_\lambda$. Definition~\ref{Def1.2} and Lemma~\ref{Lem6.1} give $u_\lambda-u\in\mathcal D^s(B)$ and a locally bounded source. Lemma~\ref{Lem3.2} extends the inequality to every nonnegative $H_0^s(B)$ test, so Lemma~\ref{Lem5.1} applies with $c=0$. The identity $u\equiv u_\lambda$ would reflect a nonremovable singular point to a regular point and is excluded by local boundedness, exactly as in Proposition~\ref{Prop6.3}. Therefore $u<u_\lambda$ in $\Sigma_\lambda\setminus\Gamma_\lambda$ for every $\lambda<0$. Taking $\lambda=(r+t)/2$ proves strict increase for $r<t<0$; symmetry gives strict decrease on the positive half-space. This completes the proof.
	\end{proof}
	
	\section*{Acknowledgments}
	This work is supported by the National Natural Science Foundation of China (12301145, 12561020, 12261107) and Yunnan Fundamental Research Projects (202401AU070123, 202601AT070048).
	The authors acknowledge the use of ChatGPT (OpenAI) for exploratory discussions and language refinement during the preparation of this work. All mathematical statements, proofs, and references were independently checked by the authors, who take full responsibility for the content of the paper.
	
	\medskip
	{\bf Author Contributions:} All authors contributed equally to the writing and preparation of the manuscript.
	
	\medskip
	{\bf Data availability:} Data sharing is not applicable to this article as no new data were created or analyzed in this study.
	
	\medskip
	{\bf Conflict of interest:} The authors declare that they have no conflict of interest.

\end{document}